\newtheorem{theorem}{Theorem}[section]
\newtheorem{lemma}[theorem]{Lemma}
\newtheorem{corollary}[theorem]{Corollary}
\newtheorem{definition}[theorem]{Definition}
\newtheorem{fact}[theorem]{Fact}
\newtheorem{remark}{Remark}[section]
\newtheorem{notation}{Notation}[section]
\newtheorem{conjecture}{Conjecture}
\newtheorem{question}{Question}
\newcommand{\R}{\ensuremath{\mathbb{R}}}
\newcommand{\1}{\mathbf{1}}
\newcommand{\thmref}[1]{Thm.~\ref{#1}}
\newcommand{\lemref}[1]{Lemma~\ref{#1}}
\newcommand{\sym}{\mathrm{sym}}
\newcommand{\remove}[1]{}
\title{{\bf Eigenvectors of random graphs: Nodal domains}}
\author{Yael Dekel
\thanks{Work supported in part by a grant from the binational science foundation US-Israel.}
\\ {\small Hebrew University}
\and James R. Lee
\thanks{Much of this work was done during a visit of the author to the Hebrew University.  Research partially supported by NSF CCF-0644037.}
\\ {\small University of Washington}
\and Nathan Linial
\thanks{Work supported in part by a grant from the binational science foundation US-Israel.}
\\ {\small Hebrew University} }
\begin{document}

\date{}
\maketitle

\begin{abstract}
 We initiate a systematic study of eigenvectors of random graphs.
 Whereas much is known about eigenvalues of graphs and how they
 reflect properties of the underlying graph, relatively little is
 known about the corresponding eigenvectors. Our main focus in this
 paper is on the {\em nodal domains} associated with the different
 eigenfunctions. In the analogous realm of Laplacians of Riemannian
 manifolds, nodal domains have been the subject of intensive research
 for well over a hundred years. Graphical nodal domains turn out to
 have interesting and unexpected properties. Our main theorem asserts
 that there is a constant $c$ such that for almost every graph $G$,
 each eigenfunction of $G$ has at most two large nodal domains, and
 in addition at most $c$ exceptional vertices outside these primary
 domains. We also discuss variations of these questions and briefly
 report on some numerical experiments which, in particular, suggest
 that almost surely there are just two nodal domains and no
 exceptional vertices.
\end{abstract}

\section{Introduction}

Let $G$ be a graph and let $A$ be its adjacency matrix. The
eigenvalues of $A$ turn out to encode a good deal of interesting
information about the graph $G$. Such phenomena have been intensively
investigated for over half a century. We refer the reader to the
book~\cite[Ch. 11]{lovasz} for a general discussion of this subject
and to the survey article~\cite{hlw} for the connection between
eigenvalues and expansion.  Strangely, perhaps, not much is known
about the {\em eigenvectors} of $A$ and how they are related to the
properties of $G$. However, in many application areas such as machine
learning and computer vision, eigenvectors of graphs are being used
with great success in various computational tasks such as partitioning
and clustering. For example, see the work of Shi and Malik
\cite{SM00}, Coifman, et. al. \cite{Coifman1,Coifman2}, Pothen, Simon
and Liou \cite{PSL90} and others.  In particular, a basic technique
for spectral partitioning (e.g. Weiss~\cite{Weiss99}) involves
splitting a graph according to its nodal domains. As far as we know,
the success of these methods has not yet been given a satisfactory
theoretical explanation and we hope that our investigations will help
in shedding some light on these issues as well.
We also mention that nodal domain counts in graphs
are relevant to various studies in statistical physics; see the survey \cite{howtocount}.

There is, on the other hand, a rich mathematical theory dealing with
the spectrum and eigenfunctions of Laplacians on manifolds. We only
mention this important and highly relevant background material and
refer the reader who wants to know more about this theory to Chapter 8
in Marcel Berger's monumental panorama of Riemannian
Geometry~\cite{berger}.  Suffices it to say that the adjacency matrix
of a graph is a discrete analogue of the Laplacian (we say a bit more
about other analogs below).
The geometric perspective of graphs is, in our opinion, among the most promising and most exciting aspects of present-day graph theory. Among the recent successes of this point of view is the metric theory of graphs and its computational applications. The success of the metric theory notwithstanding, we believe that other areas of geometry can be incorporated into this line of development.

The only necessary facts we require are that the
(geometric) Laplacian has a discrete spectrum, that its smallest
eigenvalue is zero and that the corresponding eigenfunction is the
constant function. This is analogous to the fact that the first
eigenvector of a finite connected graph is a positive vector and in
particular, if the graph at hand is $d$-regular, then its first
eigenvalue is $d$ and that in the corresponding eigenvector all
coordinates are equal.

{\em Nodal domains} of eigenfunctions of the Laplacian have been
studied in depth for more than a century.  We will only discuss this
concept in the realm of graphs and refer the interested reader
to~\cite{berger} for further information about the geometric
setting.   So what are nodal domains? Let $G$ be a finite connected
graph. It is well-known that every eigenfunction $f$ but the first
takes both positive and negative values. These values induce a
partition of the vertex set $V(G)$ into maximal connected components
on which $f$ does not change its sign; these are the nodal domains of $f$
(see below for the precise definition).

We maintain the following convention: If $G$ is an $n$-vertex
graph, we denote the eigenvalues of its adjacency matrix by $\lambda_1
\ge \lambda_2 \ge \ldots$. We simply refer to the $\lambda_i$ as the
{\em eigenvalues of $G$} and let $f_1, f_2, \ldots$ be the
corresponding eigenfunctions, normalized in $\ell_2$.
A slight adaptation of classical theorem
due to Courant (see, for example,~\cite{chavel}) shows that for every
$k$, the eigenfunction $f_k$ has at most $k$ nodal domains. This
statement is a bit inaccurate and we refer the reader
to~\cite{discretenodal} for a full account of Courant's theorem for
graphs.

We impose throughout some fixed (but arbitrary) ordering on the vertex
set $V=\{v_1, v_2, \ldots, v_n\}$, andthe coordinates in eigenvectors
of $G$ are arranged in this order. We freely interchange between the
vector $(f(v_1), \ldots, f(v_n))$ and the corresponding function $f :
V \to \mathbb R$.  In the graph setting, one has to be careful in
defining nodal domains properly.

\begin{definition}
 Let $G=(V,E)$ be a graph and let $f: V \rightarrow \R$ be any real
 function.  A subset $D \subseteq V$ is a {\em weak nodal domain of
   $f$} if it is a maximal subset of $V$ subject to the two
 conditions
\begin{enumerate}
\item $D$ is connected, and
\item if $x,y \in D$ then $f(x) f(y) \geq 0$.
\end{enumerate}
We say that $D$ is a {\em strong nodal domain} if (ii) is replaced by
\begin{enumerate}
\item[2'.] if $x,y \in D$, then $f(x) f(y) > 0$.
\end{enumerate}
\end{definition}

\begin{figure}
\centerline{
\begin{tabular}{c c c}
$d=3$&$d=4$&$d=5$\\
\includegraphics[bb=0 0 2016 1555,width=5cm]{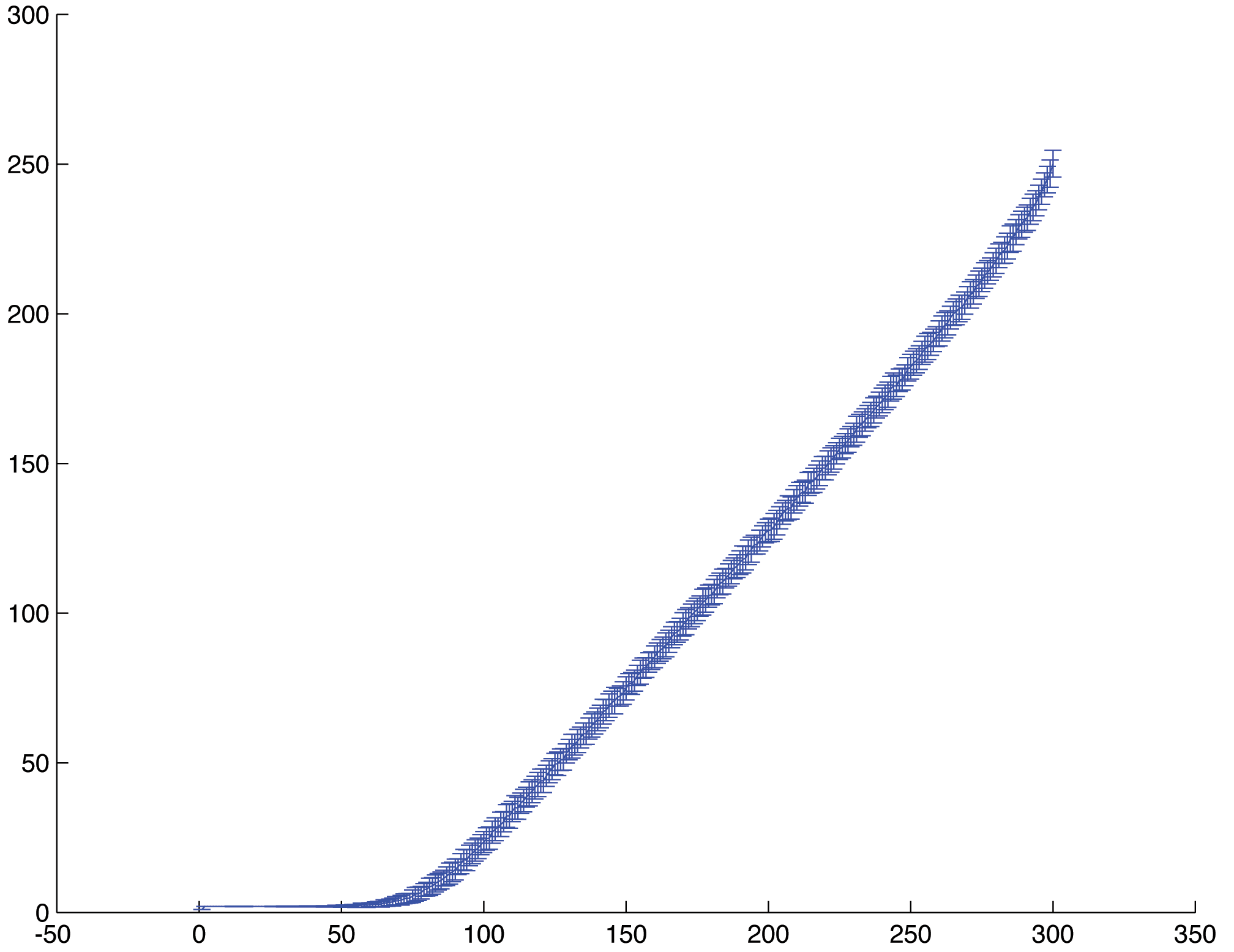} &
\includegraphics[bb=0 0 2007 1560,width=5cm]{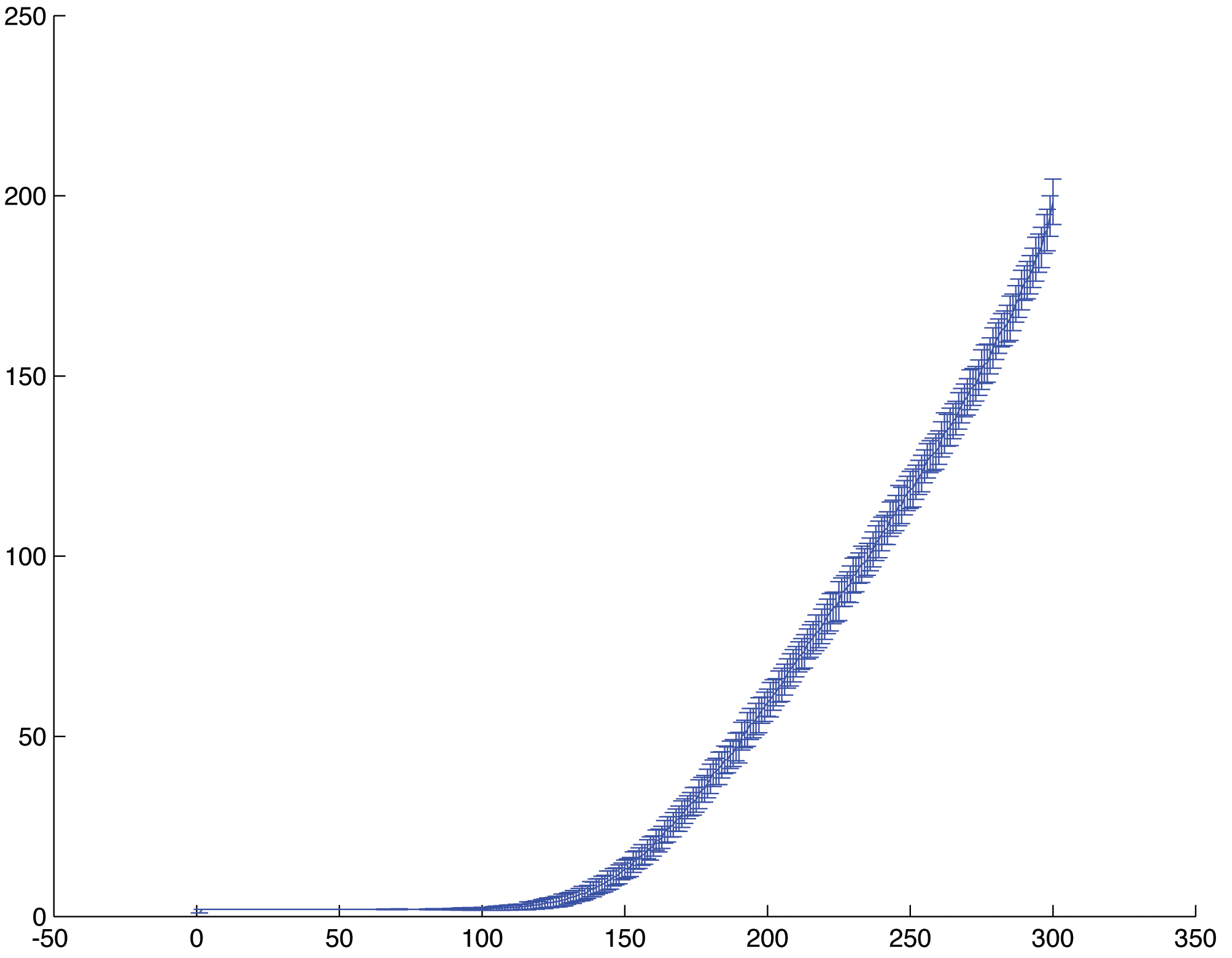} &
\includegraphics[bb=0 0 2025 1554,width=5cm]{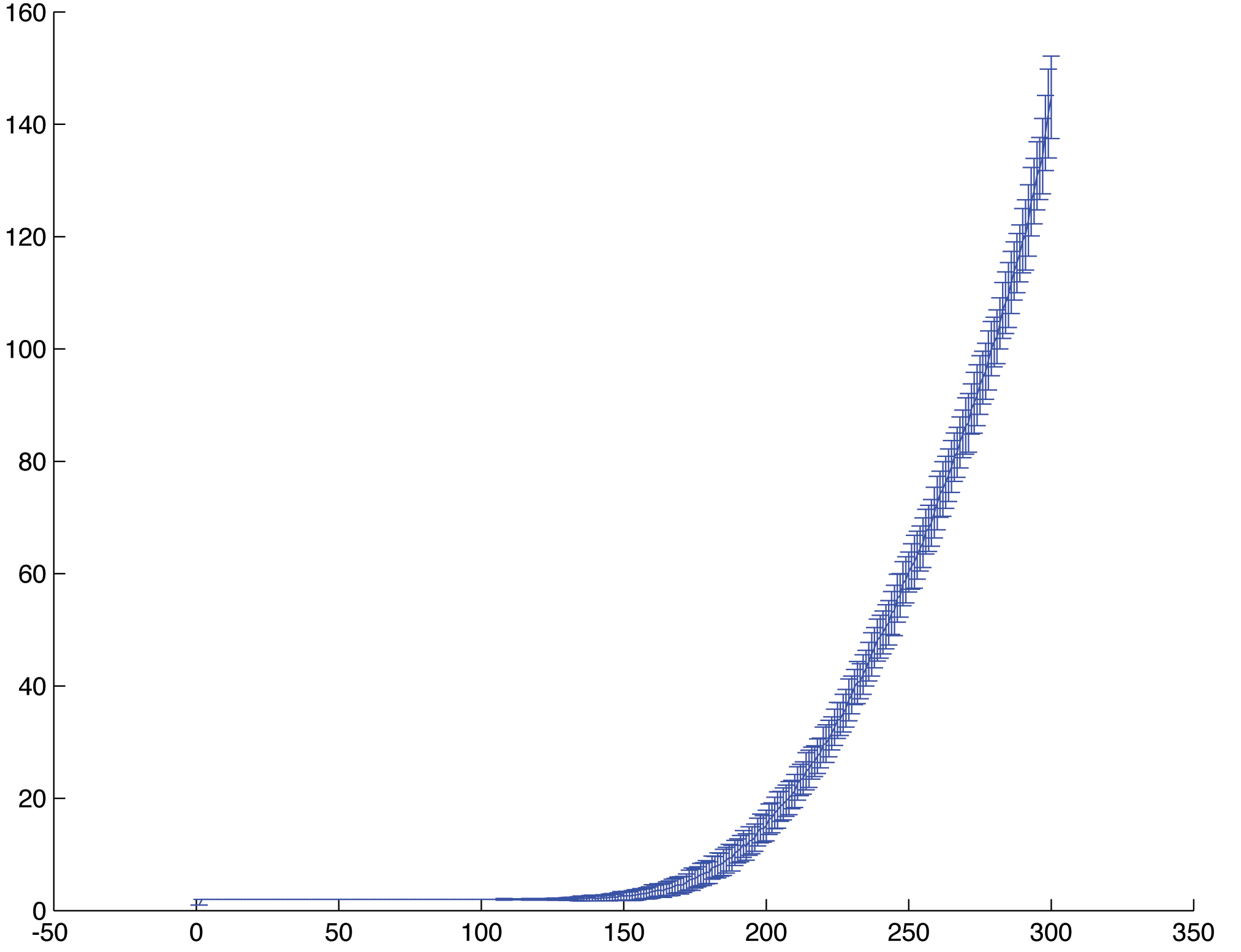}
\end{tabular}
}
\caption{\small The number of nodal domains in a random $d$-regular
$300$ vertex graph. There are $y$ nodal domains corresponding to the
$x$'th eigenvector (eigenvalues are sorted). For each $d$ we show the
average and standard deviation of $100$ such random graphs.}
\label{regular}
\end{figure}

\medskip

The main focus of our research is the following problem.

\begin{question}
 How many nodal domains (strong or weak) do the eigenvectors of $G$
 tend to have for $G$ that comes from various random graph models?
\end{question}

To get some initial idea, we started our research with a numerical
experiment whose outcomes were quite unexpected. We sampled
numerous graphs from the random graph space $G(n,\frac12)$.  It turned
out that in each and every one of these cases, {\em all} eigenvectors
of the adjacency matrix had {\em exactly two nodal domains}.  (In the experiments, no
eigenvector ever had a 0 coordinate, so the notions of strong and weak
domains are equivalent.)  The same experimental phenomenon was
observed for several smaller values of $p > 0$ in the random graph
model $G(n,p)$, provided that $n$ is large enough. Even more
unexpected were the results obtained for {\em random regular
 graphs}. Some of these results are shown in Fig.  \ref{regular}. We
found that quite a few of the first eigenvectors have just two nodal
domains, and only then the number of nodal domains starts to grow.

As mentioned above, there are other discrete analogs to the geometric
Laplacian. One often considers the so-called {\em combinatorial
 Laplacian} of a graph $G$. This is the matrix $D-A$ where $A$ is
$G$'s adjacency matrix and $D$ is a diagonal matrix with $D_{ii}$
being the degree of the vertex $v_i$.  It is well known that this
matrix is positive semidefinite, and it is also of interest to
investigate similar questions for the combinatorial Laplacian of
random graphs. The convention here is that the eigenvalues are sorted
as $0=\mu_1 \le \ldots \le \mu_n$. For regular graphs, the question
for the adjacency matrix and for the discrete Laplacian are
equivalent, since $\lambda_i + \mu_{n-i+1}=d$ for every $i$ and the
eigenfunction corresponding to $\lambda_i$ and to $\mu_{n-i+1}$ in the
two matrices are identical. However, for graphs in $G(n,p)$ it turns
out that the two spectra behave slightly differently. Computer
simulations suggest that all eigenvectors of the combinatorial
Laplacian that correspond to $\mu_2 \ldots \mu_{n-\Delta}$ for some
small $\Delta$ have exactly two nodal domains. However, among the last
$\Delta$ eigenfunctions, some have three nodal
domains. Fig. \ref{laplacian} suggests that in a constant fraction of
the graphs in $G \left( n, \frac12 \right)$ the eigenvector
corresponding to $\mu_n$ has three nodal domains.

\begin{figure}
\centerline{
\begin{tabular}{c}
\includegraphics[bb=0 0 512 385,width=6cm]{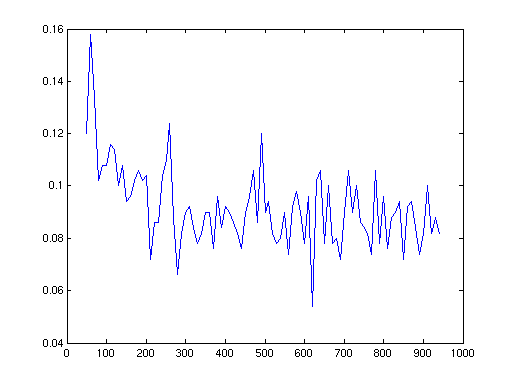} \\[-0.3cm]
{\footnotesize number of vertices}
\end{tabular}
}
\caption{\small The probability in $G(n,\frac12)$ that the last
eigenvector of the Laplacian has three nodal domains. For each $n$
there were $500$ experiments carried out.}
\label{laplacian}
\end{figure}

\medskip

Our main theorem for $G(n,p)$ partly establishes these observed
phenomena.

\begin{theorem}
\label{main_theorem}
For every $p \in (0,1)$, if $G \sim G(n,p)$, then asymptotically
almost surely the following holds for every eigenvector of $G$.  The
two largest weak nodal domains cover all vertices in $G$ with the
exception of at most $O_p(1)$ vertices, where $O_p(1)$ represents a
constant depending only on $p$.  The two largest strong nodal domains
cover all the vertices $\{ v \in V : f(v) \neq 0 \}$, with the
exception of at most $O_p(1)$ vertices.  In particular, in both cases
every eigenvector of $G$ has at most $O_p(1)$ (strong or weak) nodal
domains.
\end{theorem}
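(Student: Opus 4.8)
The Perron eigenvector $f_1$ is positive and thus has a single nodal domain, so fix any other eigenvector $f$, normalised in $\ell_2$, with eigenvalue $\lambda$, and set $P=\{v:f(v)>0\}$, $N=\{v:f(v)<0\}$, $Z=\{v:f(v)=0\}$. The strong nodal domains of $f$ are precisely the connected components of $G[P]$ together with those of $G[N]$, and each weak nodal domain is one such component with some vertices of $Z$ attached. Hence the theorem reduces to showing that, a.a.s., for every $f$ the components of $G[P]$ (resp.\ $G[N]$) other than the largest one together contain only $O_p(1)$ vertices, and that $|Z|=O_p(1)$ (the bound on $|Z|$ is needed only for the weak version).

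I would first record the a.a.s.\ properties of $G\sim G(n,p)$ to be used. Writing $A=p(J-I)+R$, one has $\|R\|=O_p(\sqrt n)$, all degrees equal $(1+o(1))pn$, and $f_1=\1/\sqrt n+o(1)$ in $\ell_2$, so that $\langle\1,f\rangle=o(\sqrt n)$ for every non-Perron $f$; and, with $\ell=\ell_p:=C_p\log n$, (i) any two disjoint vertex sets of size at least $\ell$ are joined by an edge, and (ii) every vertex set $W$ with $1\le|W|\le\ell$ has common non-neighbourhood of size $|\mathrm{NN}(W)|\le 3n(1-p)^{|W|}+\ell$; both (i) and (ii) follow from a Chernoff bound and a union bound over sets of size $\le\ell$. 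Property (i) already delivers the coarse picture: $G[P]$ has at most one component of size $\ge\ell$, and if $D_1$ is the largest component and $T_P:=P\setminus D_1$ the rest, then $T_P$ sends no edge to $D_1$, so by (i) either $|T_P|<\ell$ or $|D_1|<\ell$; in the latter case, partitioning the components of $G[P]$ into two groups of total size in $[\ell,2\ell]$ contradicts (i) unless $|P|<3\ell$. With the symmetric statement for $N$, we already know there are at most two large nodal domains and at most $O(\log n)$ exceptional vertices; the whole difficulty is to improve $O(\log n)$ to $O_p(1)$.

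The key reduction is that it suffices to show, a.a.s., that every non-Perron eigenvector $f$ satisfies $\min(|P|,|N|)\ge c\,n$ for a fixed $c=c_p>0$ (together with $|Z|=O_p(1)$). Indeed, if $|P|\ge cn$ then $|D_1|=|P|-|T_P|\ge cn-\ell$, and since $T_P$ has no edge to $D_1$ we have $D_1\subseteq\mathrm{NN}(T_P)$; applying (ii) with $W=T_P$ (legitimate because $|T_P|<\ell$) gives $cn-\ell\le 3n(1-p)^{|T_P|}+\ell$, hence $(1-p)^{|T_P|}\ge c/6$ and $|T_P|\le\log(6/c)\big/\log\frac1{1-p}=O_p(1)$; likewise $|T_N|=O_p(1)$. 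For the strong version this already finishes (the exceptional set lies in $T_P\cup T_N$), and for the weak version one adds that, $G$ being connected, no component of $G[Z]$ is isolated from $P\cup N$, so by (i) every vertex of $Z$ is absorbed into a $P$-side or $N$-side weak domain, contributing at most $|Z|=O_p(1)$ further vertices. This reduction also disposes of a large part of the spectrum for free: when $\lambda>0$, each vertex of a non-largest component of $G[P]$ has all its positive neighbours inside that (small) component, which gives the entrywise inequality $A[T_P]\,h\ge\lambda h$ for $h:=f|_{T_P}>0$, whence $\lambda\le\lambda_1(A[T_P])<\ell$ since $A[T_P]$ is block-diagonal with blocks on fewer than $\ell$ vertices; thus $\lambda\ge\ell$ forces $T_P=T_N=\emptyset$ regardless of the sizes of $P,N$. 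So only eigenvectors with $\lambda<\ell$ remain, and in fact only those with $\lambda\le 0$ are genuinely problematic, the inequality above being vacuous there.

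The main obstacle is therefore the ``no badly unbalanced eigenvector'' statement $\min(|P|,|N|)\ge c_p\,n$ (with $|Z|=O_p(1)$ proved similarly). This is where the real probabilistic content sits, and the perturbative decomposition $f=\tfrac{p\langle\1,f\rangle}{\lambda+p}\,\1+\tfrac1{\lambda+p}Rf$ is of no help because for $|\lambda|=O(\log n)$ the remainder is not small. The plan is to argue by contradiction: if $|P|=m=o(n)$, then reading the eigenvalue equation at the vertices of $N$ shows that each $v\in N$ interacts with $f|_P$ only through an essentially random $0/1$ row $(A_{vu})_{u\in P}$, so $\sum_{u\in P}A_{vu}f(u)=p\langle\1,f|_P\rangle+(\text{a fluctuation of size }\Theta(\|f|_P\|_2))$; these fluctuations vary across $v\in N$ and cannot be reconciled with the requirement that this sum equal $\lambda f(v)-\sum_{u\in N}A_{vu}f(u)$ unless $f|_N$ is forced into a structure (tiny $\ell_2$-mass, or mass concentrated on few coordinates) incompatible with $\|f\|_2=1$, with $\langle\1,f\rangle=o(\sqrt n)$, and with the absence of localised eigenvectors in $G(n,p)$. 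Quantifying this — a second-moment/anticoncentration estimate for the submatrix product $A[N,P]\,f|_P$, uniform over the $\binom nm$ choices of $P$ and over $\lambda$ (or, alternatively, invoking known eigenvector delocalisation for dense random matrices) — is the crux; everything else is the elementary bookkeeping above.
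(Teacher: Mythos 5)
Your elementary bookkeeping (the independent-set argument showing all non-principal sign components are of size $O(p^{-1}\log n)$, the neighbourhood-union/common-non-neighbourhood counting, and the estimate $\langle f,\1\rangle=o(\sqrt n)$) matches the easy part of the paper (Lemma \ref{lem:smallsum}, Lemma \ref{lem:smallcomp}, and the combinatorial half of Lemma \ref{lem:fewsmall}). But the proof has a genuine gap exactly where you say "the real probabilistic content sits": the statement that no non-first eigenvector can be badly unbalanced, i.e.\ $\min(|P|,|N|)\ge c_p n$ (together with $|Z|=O_p(1)$), is never proved -- it is only described as a "plan" via a second-moment/anticoncentration estimate for $A[N,P]\,f|_P$, "uniform over the $\binom{n}{m}$ choices of $P$," or by "invoking known eigenvector delocalisation." This is precisely the hard step. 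A union bound over exponentially many candidate sets requires failure probability $\exp(-\Omega(n))$ for each fixed set, which a second-moment/Paley--Zygmund-type estimate does not deliver; and for the symmetric matrix $A$ the needed smallest-singular-value (invertibility) bounds are, as the paper points out, far too weak. The paper's proof circumvents this obstruction by a specific reduction: for a fixed large set $S$, write the relevant rows of $pJ-A$ as $[P\;\;Q]$ where $Q$ is a \emph{rectangular} block with fully i.i.d.\ entries, and apply the Litvak--Pajor--Rudelson--Tomczak-Jaegermann(--Vershynin) bound (Theorem \ref{thm:LPRTV}, Corollary \ref{cor:LPRTV}) to show $Q$ cannot map the unit vector $f|_{\overline S}/\|f|_{\overline S}\|_2$ near the span of $\1$, yielding Theorem \ref{thm:l2mass} with exponential probability and hence Corollary \ref{cor:l2mass} after the union bound. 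Your proposal contains no substitute for this mechanism.

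Two further points. First, your intermediate target is \emph{stronger} than what the paper establishes: Theorem \ref{main_theorem} does not assert that both sign classes have linear size (it does not rule out, say, a tiny negative class), and $|Z|=O_p(1)$ is essentially the paper's open Conjecture; the paper's route through Corollary \ref{cor:l2mass} and Lemma \ref{lem:fewsmall} needs neither. So even granting your bookkeeping, you have reduced the theorem to an unproved statement that is at least as hard as, and in part stronger than, the one you are trying to prove. Second, a vague appeal to eigenvector delocalisation does not close the gap: an $\ell_\infty$ bound controls no sign balance by itself, and the paper's Theorem \ref{thm:exactly2} shows that even assuming $\|f\|_\infty=o(1)$ one still needs the mass-on-large-subsets machinery (Theorem \ref{thm:l2mass2}) to draw conclusions about nodal domains. (Your side remark disposing of eigenvalues $\lambda\ge C\log n$ via Collatz--Wielandt on the small components is correct, but it is a minor saving and does not touch the main difficulty.)
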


We remark that our bounds are quite reasonable (see Appendix A).
For instance, for $p=1/2$, we show that there are at most 46
exceptional vertices almost surely.

\subsection{Overview of our approach}

Given $G \sim G(n,p)$ and an eigenvector $f$ of $G$, we first
partition $V(G) = \mathcal P_f \cup \mathcal N_f \cup \mathcal E_f$
where $\mathcal P_f, \mathcal N_f$ are the largest positive and
negative nodal domains of $f$, respectively, and $\mathcal E_f$ is a
set of exceptional vertices.  All eigenvectors are normalized to be
unit vectors in $\ell_2$.

We show that if $\mathcal E_f$ is large, then we can use the
eigenvalue condition, combined with upper estimates on the eigenvalues
of $G(n,p)$ to find a large subset $S$ of coordinates for which
$\|f|_S\|_2$ is smaller than one would expect for a random unit vector
$f \in S^{n-1}$.  The last step is to show that the probability that
some eigenvector has small 2-norm on a large set of coordinates is
exponentially small, and then apply a union bound over all such
subsets.

The problem is that the final step seems to require very strong upper
bounds on $\|A^{-1}\|$ (i.e. lower bounds on the smallest singular
value of $A$) for a random discrete matrix $A$.  Although there has
been a great deal of progress in this direction \cite{LPRTV05, LPRT05,
 Rudelson, TaoVu, TaoVuCost, RV07} (see also the survey
\cite{VuSurvey}), the known bounds for symmetric matrices are far too
weak for our purpose.  Thus it is crucial that we reduce to proving
upper bounds on $\|A^{-1}\|$ when $A$ is a random {\em rectangular}
$\pm 1$ Bernoulli matrix.  In this case, we can employ the optimal
bounds of \cite{LPRT05,LPRTV05} which also yield the exponential
failure probability that we require.  This reduction is carried out in
Theorem \ref{thm:l2mass}.

In Section \ref{sec:exactly}, we show that it is possible to get
significantly better control on the nodal domains of $G(n,p)$ if we
could get slightly non-trivial bounds on the $\ell_\infty$ norms of
eigenvectors.  This is a possible avenue for future research.

\subsection{Preliminaries}

\begin{notation}
 We denote by $G(n,p)$ a random graph with $n$ vertices, where each
 edge is chosen independently with probability $p$. The set of
 neighbors of a vertex $x \in V(G)$ is denoted by $\Gamma (x)$.
\end{notation}
\begin{notation}
 For a graph $G=(V,E)$ a function $f : V \to \mathbb R$, and any
 subset $S \subseteq V$, we denote $f(S) = \sum_{y \in S} f(y)$. In
 particular, for every $x \in V$ we denote $f(\Gamma (x)) = \sum_{y
   \sim x} f(y) = \sum_{y \in V} A_{xy} f(y)$, where $A$ is the
 adjacency matrix of $G$.
\end{notation}

As usual, the inner product of $f,g : V \to \mathbb R$ is denoted
$\langle f,g \rangle = \sum_{x \in V} f(x)g(x)$.  For $p \geq 1$, we
define $\|f\|_p = \left(\sum_{x \in V} |f(x)|^p\right)^{1/p}$, and
$\|f\|_\infty = \max_{x \in V} |f(x)|$.

\begin{definition}
 For $p \in [0,1]$, we define the random variable $X_p$ by
$$
X_p = \begin{cases}
 p-1 & \textrm{with probability $p$} \\
 p & \textrm{with probability $1-p$}.
\end{cases}
$$
In particular, $\mathbb E X_p = 0$.
\end{definition}
\begin{definition}
 Let $M_{m \times k}(p)$ be the $m \times k$ matrix whose entries are
 independent copies of $X_p$, and let $M_k^{\sym}(p)$ be the
 symmetric $k \times k$ matrix whose entries above the diagonal are
 independent copies of $X_p$, and whose diagonal entries are $p$.
\end{definition}

Unless otherwise stated, throughout the manuscript, {\em all
 eigenvectors are assumed to be normalized in $\ell_2$.}  When $A$ is
the adjacency matrix of a graph, we arrange its eigenvalues as
$\lambda_1 \geq \lambda_2 \geq \ldots \geq \lambda_n$.  The
eigenvector corresponding to $\lambda_1$ is called the {\em first
 eigenvector,} and any other is a {\em non-first eigenvector.}

As is customary in this area, we occasionally say ``almost surely'' as
shorthand for ``asymptotically almost surely.''  All asymptotic
statements are made for large $n$ and it is always implicitly assumed
that $n$ is large enough.

\section{Spectral properties of random matrices}

We now review some relevant properties of random matrices.

\begin{theorem}[Tail bound for symmetric matrices]\label{thm:symtail}
If $A \sim M_k^{\sym}(p)$, then for every $\xi > 0$
\[
\Pr \left( \| A \| \geq (2 \sqrt{p(1-p)} + \xi ) \sqrt{k} \right) \leq
4 e^{ -(1 - o(1)) \xi^2 k / 8}
\]
Here $\| A \|$ stands for the $\ell_2$ operator norm of $A$.
\end{theorem}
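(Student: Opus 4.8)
The plan is to control $\|A\|$ for $A \sim M_k^{\sym}(p)$ by combining a net argument over the sphere with a Bernstein-type tail bound for the quadratic form $\langle Ax, x\rangle$, using the fact that $A$ is symmetric so $\|A\| = \sup_{x \in S^{k-1}} |\langle Ax, x\rangle|$. First I would recall that for a symmetric matrix the operator norm equals the largest magnitude of the Rayleigh quotient, so it suffices to bound $|\langle Ax, x\rangle|$ uniformly. For a fixed unit vector $x$, write $\langle Ax, x\rangle = \sum_i A_{ii} x_i^2 + 2\sum_{i<j} A_{ij} x_i x_j$. The diagonal term is the deterministic quantity $p\sum_i x_i^2 = p = o(\sqrt{k})$, which is negligible. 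The off-diagonal part is a sum of independent mean-zero random variables $2 A_{ij} x_i x_j$ (each $A_{ij} = X_p$ with $|X_p| \le 1$ and variance $p(1-p)$), whose total variance is $\sum_{i<j} 4 p(1-p) x_i^2 x_j^2 \le 2 p(1-p) (\sum_i x_i^2)^2 = 2p(1-p)$. A Bernstein inequality then gives, for a fixed $x$, a bound of the form $\Pr(|\langle Ax,x\rangle| \ge t) \le 2\exp(-t^2/(4p(1-p) + O(t/\sqrt{k})))$ — note the bounded-increments term is of lower order once $t = O(\sqrt{k})$ and the $x_i x_j$ are $O(1)$, which one checks using $\|x\|_\infty \le 1$.

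Next I would run a standard $\varepsilon$-net argument: let $\mathcal{N}$ be an $\varepsilon$-net of $S^{k-1}$ of size at most $(3/\varepsilon)^k$, and use the comparison $\|A\| \le (1-2\varepsilon)^{-1} \max_{y \in \mathcal{N}} |\langle Ay, y\rangle|$, valid for symmetric $A$ (this is the quadratic-form version of the usual net bound; it follows by writing $x = y + (x-y)$ and expanding, absorbing the cross terms into $\|A\|$). Taking $\varepsilon$ a small constant and applying a union bound over $\mathcal{N}$ against the single-vector tail at level $t = (2\sqrt{p(1-p)} + \xi')\sqrt{k}$ (for a slightly shrunk $\xi'$ to compensate the $(1-2\varepsilon)^{-1}$ factor), the failure probability is at most $(3/\varepsilon)^k \cdot 2\exp(-(1-o(1))(t')^2 / (4p(1-p)))$, where the exponent is $-(1-o(1))\, \xi'^2 k / 2 \cdot (\text{stuff})$; choosing $\varepsilon$ small enough that the net's entropy $k\log(3/\varepsilon)$ is dominated and tracking constants carefully yields the stated $4e^{-(1-o(1))\xi^2 k/8}$. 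The factor $2\sqrt{p(1-p)}$ in the leading term is exactly the almost-sure edge of the semicircle law, so the net argument must be tight enough to not lose a constant there — this is why the cleaner route may actually be to cite a known concentration result (e.g.\ the Alon–Krivelevich–Vu concentration of $\|A\|$ around its median combined with the semicircle law to locate the median) rather than to optimize the net bound by hand.

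The main obstacle is getting the constant in the exponent right while keeping the leading coefficient of $\sqrt{k}$ exactly $2\sqrt{p(1-p)}$: a crude net argument gives the right order $e^{-\Omega(\xi^2 k)}$ but typically inflates the leading term to something like $C\sqrt{k}$ with $C$ larger than $2\sqrt{p(1-p)}$, or degrades the constant in the exponent. The standard fix, which I expect to use, is a two-part statement: (i) the edge of the spectrum concentrates at $(2\sqrt{p(1-p)}+o(1))\sqrt{k}$ by the semicircle law for Wigner-type matrices with bounded entries, and (ii) $\|A\|$ is $1$-Lipschitz in the entries of $A$ (as a function on $\mathbb{R}^{\binom{k}{2}}$ with the Euclidean norm), so by a bounded-differences / Talagrand concentration inequality it deviates from its median by more than $\xi\sqrt{k}$ with probability at most $4\exp(-(1-o(1))\xi^2 k / 8)$ — the constant $1/8$ arising from the variance proxy $p(1-p) \le 1/4$ of each entry together with the $\ell_2$-Lipschitz constant. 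Combining (i) and (ii) gives the theorem; the $o(1)$ absorbs the gap between median and the semicircle edge.
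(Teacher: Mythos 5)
Your final route---locating the edge at $(2\sqrt{p(1-p)}+o(1))\sqrt{k}$ and then invoking Talagrand-type concentration of the spectral edge around its median with exponent $\xi^2 k/8$, absorbing the mean--median gap into the $o(1)$---is exactly the paper's proof, which cites F\"uredi--Koml\'os for the expected edge and Alon--Krivelevich--Vu for the concentration step; note only that the edge location requires the F\"uredi--Koml\'os norm bound (or Bai--Yin), not merely the bulk semicircle law, and that your abandoned net/Bernstein route would indeed fail to recover the sharp constant $2\sqrt{p(1-p)}$ (and the Bernstein increment term is not lower order at $t\sim\sqrt{k}$), as you yourself observed before switching to the concentration argument.
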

\begin{proof}
 F{\"u}redi and Koml{\'o}s prove in \cite[\S 3.3]{FK} that the
 expected value of the largest magnitude eigenvalue of $A$ is at most
 $(2+o(1)) \sqrt{p(1-p)} \sqrt{k}$. Alon, Krivelevich and Vu prove in
 \cite{AKV} (see also \cite{KV02}) that the probability that the
 largest eigenvalue of $A$ exceeds its median by $\xi \sqrt{k}$ is at
 most $2 e^{-\xi^2 k / 8}$, and so is the probability that the
 smallest eigenvalue of $A$ is smaller than its median by $\xi
 \sqrt{k}$.  As usual in the context of sharp concentration, the
 expected value of the first and last eigenvalues differs from the
 median by at most $O(1)$. The conclusion follows.
\end{proof}
We also need a similar bound for $A \sim M_{m \times k}(p)$ whose
proof is standard; we repeat it below
in order to record the exact dependence of the constants on $p$.

\begin{theorem}[Tail bound for non-symmetric matrices]\label{thm:nonsymtail}
 For any $m \geq k$, if $A \sim M_{m \times k}(p)$ then
 \[
 \Pr \left(\| A \| \geq a_1 \sqrt{p (1-p)} \sqrt{m} \right) \leq
 e^{-a_2 m}
 \]
 where $a_1,a_2$ are constants that depend only on $p$.
\end{theorem}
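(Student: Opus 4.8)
The plan is the textbook $\e$-net argument for the operator norm of a random matrix with independent, bounded, mean-zero entries, with all constants tracked explicitly. Recall that every entry of $A \sim M_{m\times k}(p)$ is a copy of $X_p$, which satisfies $|X_p| \le \max(p,1-p) \le 1$, $\mathbb E X_p = 0$, and $\mathrm{Var}(X_p) = p(1-p)$.

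First I would discretize. Since $\|A\| = \sup_{x \in S^{k-1},\, y \in S^{m-1}} \langle A x, y\rangle$, I fix a $\tfrac14$-net $\mathcal N$ of $S^{k-1}$ and a $\tfrac14$-net $\mathcal M$ of $S^{m-1}$. A standard volumetric bound gives $|\mathcal N| \le 9^{k}$ and $|\mathcal M| \le 9^{m}$, and a routine approximation argument gives $\|A\| \le 2 \max_{x \in \mathcal N,\, y \in \mathcal M} \langle A x, y\rangle$. As $m \ge k$, the number of pairs $(x,y)$ to be controlled is at most $9^{k+m} \le 9^{2m}$.

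Next, for a fixed pair $(x,y) \in \mathcal N \times \mathcal M$, the quantity $\langle A x, y\rangle = \sum_{i,j} A_{ij}\, x_j y_i$ is a sum of $mk$ independent mean-zero random variables, where the $(i,j)$ term lies in an interval of length $|x_j y_i|$ and $\sum_{i,j} (x_j y_i)^2 = \|x\|_2^2 \|y\|_2^2 = 1$. Hoeffding's inequality therefore gives $\Pr(\langle A x, y \rangle \ge t) \le e^{-2 t^2}$ for every $t > 0$. One could instead invoke Bernstein's inequality together with $\sum_{i,j}\mathrm{Var}(A_{ij} x_j y_i) = p(1-p)$, but in the moderate-deviation range $t = \Theta(\sqrt m)$ relevant here that bound is no stronger, since net points may have $\ell_\infty$-norm of order one; the Hoeffding estimate already suffices.

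Finally I would union bound. Choosing $t = \theta\sqrt m$,
\[
\Pr\!\left(\|A\| \ge 2\theta \sqrt m\right) \;\le\; 9^{2m}\, e^{-2\theta^2 m} \;=\; \exp\!\big(-(2\theta^2 - 2\ln 9)\, m\big),
\]
so any fixed $\theta$ with $\theta^2 > \ln 9$ makes the exponent negative; setting $a_2 := 2\theta^2 - 2\ln 9 > 0$ and $a_1 := 2\theta/\sqrt{p(1-p)}$, so that $a_1 \sqrt{p(1-p)}\sqrt m = 2\theta\sqrt m$, yields the claim. Here $a_2$ is an absolute constant and $a_1 = \Theta\!\big(1/\sqrt{p(1-p)}\big)$, so the entire $p$-dependence of the estimate is carried by the explicit factor $\sqrt{p(1-p)}$ (Appendix~A follows this route with the net parameter and $\theta$ optimized). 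There is no genuine obstacle — the argument is entirely standard — and the only point requiring care is the bookkeeping: the per-pair bound decays like $e^{-2t^2}$ while the union runs over $e^{\Theta(m)}$ points, which forces the threshold to be a constant multiple of $\sqrt m$ (and is exactly why this method cannot reach below the natural $\sqrt m$ scale), and one must check that the net parameter and $\theta$, hence $a_2$, can be chosen independently of $p$.
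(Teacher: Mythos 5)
Your proof is correct and follows essentially the same route as the paper: an $\varepsilon$-net discretization of both spheres, an exponential tail bound for each fixed $\langle Ax,y\rangle$, and a union bound over $9^{k+m}$ net pairs. The only (immaterial) differences are that you invoke Hoeffding's inequality where the paper uses the subgaussian moment-generating-function bound of Lemma~\ref{lem:subgaussiantail} (so your threshold constant is absolute and all $p$-dependence sits in $a_1 \asymp 1/\sqrt{p(1-p)}$, versus the paper's $t \propto q=\max(p,1-p)$), and you use a $\tfrac14$-net with the standard $\tfrac{1}{1-2\varepsilon}$ comparison instead of the paper's $\tfrac13$-net with successive approximation.
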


We first recall that $X_p$ has a subgaussian tail.
\begin{lemma}\label{lem:subgaussiantail}
 For every $0 \leq p \leq \frac12$,
 \[
 \mathbb E e^{t X_p} \leq e^{(1-p)^2 t^2 / 2}~.
 \]
For every $\frac12 < p \leq 1$,
\[
 \mathbb E e^{t X_p} \leq e^{p^2 t^2 / 2}~.
\]
\end{lemma}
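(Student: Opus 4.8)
The plan is to verify the moment-generating function bound directly by comparing the two-point distribution of $X_p$ with a Gaussian of a suitable variance. Recall $X_p$ takes value $p-1$ with probability $p$ and value $p$ with probability $1-p$, so $\mathbb E X_p = 0$ and $\mathbb E X_p^2 = p(1-p)$. The cleanest route is to show that for all real $t$,
\[
g(t) := \log \mathbb E e^{t X_p} = \log\!\bigl( p\, e^{(p-1)t} + (1-p)\, e^{pt}\bigr) \leq \tfrac{\sigma^2 t^2}{2},
\]
where $\sigma^2 = (1-p)^2$ when $p \leq \tfrac12$ and $\sigma^2 = p^2$ when $p > \tfrac12$. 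Note $\sigma^2 = \max\{p,1-p\}^2 \geq p(1-p)$, which is exactly the variance, so the claimed bound is not tight but has the right quadratic form; the slack is what makes the inequality provable by elementary means. By the symmetry $X_p \leftrightarrow -X_{1-p}$ (replacing $p$ by $1-p$ negates the random variable and swaps the two cases of the lemma), it suffices to treat $p \leq \tfrac12$ and prove $g(t) \leq (1-p)^2 t^2/2$.

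For the $p \le \tfrac12$ case I would use Hoeffding's lemma essentially verbatim: $X_p$ is supported in the interval $[p-1,\,p]$ of length exactly $1$, and Hoeffding's lemma states that any mean-zero random variable supported in an interval of length $L$ satisfies $\log \mathbb E e^{tX} \leq t^2 L^2/8$. That gives $g(t) \le t^2/8$, which is weaker than what we want once $p$ is bounded away from $0$. To get the sharper $(1-p)^2 t^2/2$, I would instead bound $g''(t)$ directly. Writing $q = 1-p$ and setting $u = q e^{pt}$, $w = p e^{-qt}$, one computes $g'(t) = (qu - pw \cdot \text{(sign adjustments)})/\dots$ — more cleanly, $g'(t) = \mathbb E_t X$ and $g''(t) = \mathrm{Var}_t(X)$ under the exponentially tilted measure, where under the tilt $X$ still takes only the two values $p-1$ and $p$. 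A two-point random variable taking values in $\{p-1,p\}$ has variance at most $\bigl(\tfrac{p-(p-1)}{2}\bigr)^2 = \tfrac14$; but we can do better by noting the tilted probability of the value $p-1$ is $\theta_t = p e^{(p-1)t}/(p e^{(p-1)t} + (1-p)e^{pt})$, so $g''(t) = \theta_t(1-\theta_t)$. Since $g(0) = g'(0) = 0$, Taylor's theorem gives $g(t) = \tfrac{t^2}{2} g''(\tau)$ for some $\tau$ between $0$ and $t$, so it remains to show $\theta_t(1-\theta_t) \leq (1-p)^2$ for all $t$ when $p \leq \tfrac12$. Since $\theta_t$ ranges over $(0,1)$ as $t$ ranges over $\mathbb R$ and $\theta_0 = p$, and $x(1-x)$ is increasing on $[0,\tfrac12]$, the issue is only whether $\theta_t$ can exceed... actually $\theta_t(1-\theta_t) \le \tfrac14$ always and $(1-p)^2 \ge \tfrac14$ when $p \le \tfrac12$, so the bound is immediate. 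Hence $g(t) \le (1-p)^2 t^2/2$, and the $p > \tfrac12$ case follows by the symmetry noted above.

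The main obstacle is mostly bookkeeping: confirming that the tilted-measure variance identity $g''(t) = \theta_t(1-\theta_t)$ is set up with the signs right, and double-checking the boundary cases $p = \tfrac12$ (where both bounds coincide at $t^2/8$) and $p \to 0$ or $p \to 1$ (where $X_p$ degenerates and the bound becomes trivial). There is no real analytic difficulty — once one observes that $\max\{p,1-p\}^2 \geq \tfrac14 \geq \mathrm{Var}_t(X)$, the quadratic bound falls out of Taylor's theorem applied to the cumulant generating function. An entirely equivalent and perhaps shorter write-up just invokes Hoeffding's lemma for the interval $[p-1,p]$ to get the $t^2/8$ bound and then observes $\tfrac18 \le \tfrac{(1-p)^2}{2}$ for $p \le \tfrac12$ and $\tfrac18 \le \tfrac{p^2}{2}$ for $p \ge \tfrac12$; I would present that version for brevity and mention the tilting argument only if a self-contained proof of Hoeffding's lemma is wanted.
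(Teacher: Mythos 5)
Your proposal is correct, but it follows a different route than the paper. The paper's proof is a direct computation: it writes out the moment generating function exactly, $\mathbb E e^{tX_p} = p\,e^{-t(1-p)} + (1-p)\,e^{tp}$, and compares its Taylor expansion term by term with that of $e^{(1-p)^2 t^2/2}$ (respectively $e^{p^2 t^2/2}$). You instead note that $X_p$ is mean-zero and supported in the length-one interval $[p-1,p]$, so Hoeffding's lemma gives $\mathbb E e^{tX_p} \leq e^{t^2/8}$ for all real $t$ --- or, if one wants a self-contained argument, the tilted-measure identity $g''(t) = \theta_t(1-\theta_t) \leq \tfrac14$ for the cumulant generating function $g$, combined with $g(0)=g'(0)=0$ and Taylor's theorem, yields the same bound (your sign bookkeeping is fine: the tilted variable still takes the two values $p-1$ and $p$, whose gap is $1$). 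Since $\max\{p,1-p\}^2 \geq \tfrac14$, the bound $e^{t^2/8}$ implies both stated inequalities, and your symmetry reduction $-X_p \overset{d}{=} X_{1-p}$ correctly handles the case split. Note that your passing remark that the Hoeffding bound is ``weaker than what we want once $p$ is bounded away from $0$'' had the comparison backwards, as you yourself later correct: the lemma as stated is in fact \emph{weaker} than $e^{t^2/8}$, with equality only at $p=\tfrac12$. What your route buys is brevity and transparency (a standard quoted lemma, or a two-line cgf argument), plus the observation that the constant $\max\{p,1-p\}^2$ could even be improved to $\tfrac14$, which would only sharpen the explicit constants used later (everything downstream is phrased in terms of $q=\max\{p,1-p\}$); the paper's term-by-term series comparison is more hands-on but is stated without detail.
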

\begin{proof}
 Note that
\[
   \mathbb E e^{t X_p}  =  p e^{-t (1-p)} + (1-p) e^{t p}.
\]
In both cases, the claim follows by comparing the Taylor
 series on both sides.
\end{proof}

\begin{proof}[Proof of \thmref{thm:nonsymtail}]
Let $N_1 \subseteq S^{k-1}$ and $N_2 \subseteq S^{m-1}$ be
$\frac13$-nets.
By standard estimates, $|N_1| \leq 9^k, |N_2| \leq 9^m$.

Fix
some $x\in N_1$ and $y\in N_2$, and estimate
$\Pr \left( \langle y,A x \rangle > t \sqrt{m} \right)$
as follows
 \[
 \Pr \left(  \langle y,A x \rangle  > t \sqrt{m} \right) = \Pr
 \left( e^{\lambda  \langle y,A x \rangle } > e^{\lambda t
     \sqrt{m}} \right) \leq \frac{\mathbb{E} e^{\lambda \langle y,A
     x \rangle}}{e^{\lambda t \sqrt{m}}}
 \]
Define $q = \max \left\{ p, 1-p \right\}$. By \lemref{lem:subgaussiantail} we have
 \begin{eqnarray*}
   \mathbb{E} e^{\lambda \langle y,A x \rangle } & = & \prod_{i=1}^k \prod_{j=1}^m
   \mathbb{E} e^{\lambda A_{ij} x_i y_j} \\
   & \leq & \prod_{i=1}^k \prod_{j=1}^m e^{q^2\lambda^2 x_i^2 y_j^2 / 2} = e^{q^2\lambda^2 /
     2}~.
 \end{eqnarray*}
 The optimal choice is $\lambda = \frac{t \sqrt{m}}{q^2}$
 which yields
 \[
 \Pr \left( \langle y,A x \rangle > t \sqrt{m} \right) \leq
 e^{-\frac{t^2 m}{2q^2}}
 \]
 and therefore
 \[
 \Pr \left( \exists x \in N_1, y \in N_2: | \langle y,A x \rangle | >
   t \sqrt{m} \right) \leq 2e^{-\frac{t^2 m}{2q^2}} | N_1 | | N_2
 | \leq 2e^{-\frac{t^2 m}{2q^2}} 9^{k+m}.
 \]

By successive approximation, express
$x\in S^{k-1}$ as $x = \sum_{i \geq 0} \alpha_i x_i$
where for all $i$, $x_i \in N_1$ and $|\alpha_i| \leq 3^{-i}$. Likewise
for $y\in S^{m-1}$ and $y = \sum_{i \geq 0} \beta_i y_i$ with $y_i \in N_2$
and $|\beta_i| \leq 3^{-i}$.
Therefore, if $| \langle y_i, Ax_j \rangle | \leq t \sqrt{m}$ for all $x_j \in N_1$ and $y_i
 \in N_2$, then
 \[
 | \langle y, Ax \rangle | \leq \sum_{i,j \geq 0} 3^{-i-j}
 | \langle y_i, Ax_i \rangle | \leq 3 t \sqrt{m},
 \]
 which means that
 \[
 \Pr \left( \| A \| \geq 3 t \sqrt{m}\right) \leq 9^{m+k} e^{-\frac{ t^2 m}{2q^2}}~.
 \]
Now select $t$ to satisfy
$\frac{2 + \delta}{1 + \delta} \ln 9 < \frac{ t^2}{2q^2}$,
where $m = (1+\delta)k$. Concretely, let
$$t = 2 q \sqrt{2 \cdot \frac{2+\delta}{1 + \delta}
\ln 9}.$$
With this choice, we can take
 \begin{equation*}
 a_1 = \frac{18q}{4}
\sqrt{2 \cdot \frac{2+\delta}{1 + \delta} \ln 9}
 \end{equation*}
 \begin{equation*}\label{eq:a2}
 a_2 = 3
\frac{2+\delta}{1 + \delta} \ln 9.
 \end{equation*}
\end{proof}

\begin{theorem}[Tail bound for eigenvalues of $G(n,p)$]\label{thm:furkom}
 Let $G$ be a graph from $G(n,p)$ with $p \in (0, 1)$
 and let $\lambda_1, \ldots, \lambda_n$ be the eigenvalues of $G$'s adjacency matrix.
 Then for every $i \geq 2$ and every $\xi > 0$
 \[
 \Pr \left( | \lambda_i | \geq (2 \sqrt{p(1-p)} + \xi) \sqrt{n}
 \right) \leq \exp (-\xi^2 n / 32)
 \]
\end{theorem}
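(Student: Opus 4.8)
The plan is to reduce the statement to the tail bound for centered symmetric matrices, \thmref{thm:symtail}, by quarantining the single ``large'' eigenvalue $\lambda_1$ as a rank-one contribution. First I would record the exact decomposition of the adjacency matrix. If $A$ is the adjacency matrix of $G \sim G(n,p)$ and $J$ is the all-ones matrix, then $A = pJ - M$ with $M \sim M_n^{\sym}(p)$: the diagonal entries of $pJ - M$ are $p - p = 0$, and an off-diagonal entry is $p - X_p$, which equals $1$ with probability $p$ and $0$ with probability $1-p$ --- exactly the law of an edge indicator --- with independence above the diagonal inherited from the $X_p$'s. (Concretely one may couple by setting $M := pJ - A$.)

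Next I would apply eigenvalue interlacing under a rank-one positive-semidefinite perturbation. Since $pJ \succeq 0$ has rank one (spectrum $pn, 0, \dots, 0$), Weyl's inequalities give, for every $i \ge 2$,
\[
\lambda_i(-M) \;\le\; \lambda_i(A) \;\le\; \lambda_{i-1}(-M).
\]
Because $\|{-M}\| = \|M\|$, both $\lambda_i(-M)$ and $\lambda_{i-1}(-M)$ lie in $[-\|M\|,\|M\|]$, so $|\lambda_i(A)| \le \|M\|$ for all $i \ge 2$. (The eigenvalue $\lambda_1$, of order $pn$, is precisely the one absorbed into the $pJ$ term, and the theorem correctly claims nothing about it.)

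Finally I would invoke \thmref{thm:symtail} applied to $M \sim M_n^{\sym}(p)$:
\[
\Pr\!\left(|\lambda_i| \ge (2\sqrt{p(1-p)}+\xi)\sqrt n\right) \le \Pr\!\left(\|M\| \ge (2\sqrt{p(1-p)}+\xi)\sqrt n\right) \le 4\,e^{-(1-o(1))\xi^2 n/8},
\]
and check that for $n$ large this is at most $e^{-\xi^2 n/32}$. The gap between the exponents $1/8$ and $1/32$ is exactly the room needed to swallow the constant factor $4$ together with the $o(1)$ correction, which is the sole reason the statement uses the generous constant $32$.

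There is no real obstacle here; the one thing to get right is the \emph{direction} of the interlacing (a sign slip would quarantine the wrong eigenvalue), plus a little care about the vacuous regime of very small $\xi$, where the standing ``$n$ large enough'' convention does the work. An alternative that avoids the decomposition would combine Füredi--Komlós for the typical location of $\lambda_i$ $(i\ge 2)$ with the Alon--Krivelevich--Vu concentration inequality applied to $\lambda_i$ directly, but routing through \thmref{thm:symtail} is cleaner since that theorem already bundles both ingredients.
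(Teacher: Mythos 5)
Your proof is correct, but it takes a genuinely different route from the paper. The paper proves \thmref{thm:furkom} in two lines by quoting F\"uredi--Koml\'os directly for the adjacency matrix itself, namely $\mathbb E\left(\max_{i\ge 2}|\lambda_i|\right)\le (2+o(1))\sqrt{p(1-p)}\sqrt n$, and then applying the Alon--Krivelevich--Vu concentration bound to the non-principal eigenvalues --- i.e.\ exactly the ``alternative'' you mention in your last sentence. You instead use the exact coupling $M=pJ-A\sim M_n^{\sym}(p)$ (your verification of the law of $M$, including the diagonal, matches the paper's definition), quarantine $\lambda_1$ via Weyl's inequalities for the rank-one PSD perturbation $pJ$ to get $|\lambda_i(A)|\le\|M\|$ for $i\ge2$ (the interlacing direction is right), and then invoke \thmref{thm:symtail}. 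What your route buys is self-containedness: it reuses the centered-matrix tail bound the paper has already stated, and it sidesteps the need for concentration of the \emph{second} eigenvalue (where the AKV constant degrades with the index); what the paper's route buys is brevity and no interlacing step, since F\"uredi--Koml\'os already treat the non-centered adjacency matrix. One small bookkeeping point, which you flag yourself: absorbing the prefactor $4$ and the $(1-o(1))$ of \thmref{thm:symtail} into the exponent $\xi^2 n/32$ works only for fixed $\xi>0$ and $n$ sufficiently large, which is consistent with the paper's standing asymptotic convention, so this is not a gap.
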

\begin{proof}
 F{\"u}redi and Koml{\'o}s prove in \cite[\S 3.3]{FK} that $\mathbb E
 \left( \max_{i\geq 2} | \lambda_i | \right) \leq 2 \sqrt{p(1-p)}
 \sqrt{n} (1 + o(1))$. As before, the theorem is derived by using a tail
 bound from \cite{AKV}.
\end{proof}

We now state the following theorem from \cite[Thm 3.3]{LPRTV05} (a
slight generalization of \cite{LPRT05}), specialized to the case of
the sub-gaussian random variables $X_p$.

\begin{theorem}\cite{LPRTV05}\label{thm:LPRTV}
  For any $p \in (0, 1)$, $\delta > 0$ there exist constants $\alpha =
  \alpha(p,\delta) > 0, \beta = \beta(p) > 0$ such that the
  following holds for all sufficiently large $k$ and every $w \in
  \mathbb R^m$:
$$
\Pr\left[\exists v \in S^{k-1} \textrm{ s.t. } \|Qv - w\|_2 \leq
  \alpha \sqrt{m}\right] \leq \exp(-\beta m)
$$
where $m=(1+\delta)k$ and the probability is taken over $Q \sim M_{m
  \times k}(p)$.
\end{theorem}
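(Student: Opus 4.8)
The plan is to prove the complementary statement: with probability at least $1-\exp(-\beta m)$, \emph{every} unit vector $v\in S^{k-1}$ satisfies $\|Qv-w\|_2>\alpha\sqrt m$. This is a uniform-in-$v$ lower bound on $\|Qv-w\|_2$; for $w=0$ it is exactly a lower bound on the least singular value of the tall matrix $Q$, and a nonzero shift $w$ only gives extra room. Following the approach of \cite{LPRTV05,LPRT05}, I would split $S^{k-1}$ into \emph{compressible} directions (those within $\ell_2$-distance $\kappa$ of a vector supported on at most $\rho k$ coordinates, for parameters $\rho,\kappa\in(0,1)$ to be fixed later) and \emph{incompressible} directions, and bound each family by an $\varepsilon$-net union bound. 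Throughout I would condition on the event $\mathcal G=\{\|Q\|\le a_1\sqrt{p(1-p)\,m}\}$, which by \thmref{thm:nonsymtail} fails with probability at most $\exp(-a_2 m)$; on $\mathcal G$, every event of the form ``$\|Qv-w\|_2$ is large'' is stable under $\ell_2$-perturbations of $v$, which is what legitimizes the passage from a net to all of $S^{k-1}$.

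For an incompressible $v$ there are at least $\rho' k$ coordinates with $|v_i|\ge \kappa'/\sqrt k$ (with $\rho',\kappa'$ depending only on $\rho,\kappa$). Isolating these ``spread'' coordinates and conditioning on the rest, the $j$-th coordinate $\langle R_j,v\rangle-w_j$ of $Qv-w$ ($R_j$ the $j$-th row of $Q$) is, up to a deterministic shift, a sum of at least $\rho' k$ independent terms each of magnitude $\ge\kappa'/\sqrt k$ with total variance $\asymp \rho'\,p(1-p)$; a Berry--Esseen estimate then bounds its Lévy concentration function by $\sup_{a}\Pr(|\langle R_j,v\rangle-a|\le t_0)\le 1-\eta$ for suitable $t_0,\eta>0$ depending only on $p,\rho,\kappa$, \emph{uniformly in $w_j$}. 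Since the $m$ rows are independent, a Laplace-transform (lower-tail Chernoff) bound applied to $\|Qv-w\|_2^2=\sum_j(\langle R_j,v\rangle-w_j)^2$ gives, for each fixed incompressible $v$, $\Pr(\|Qv-w\|_2\le c_0\sqrt m)\le\exp(-c_1 m)$ with $c_0,c_1>0$ depending only on $p,\rho,\kappa$. Choosing a $\gamma$-net $N$ of the incompressible sphere with $|N|\le(3/\gamma)^k$ and $\gamma$ small enough that on $\mathcal G$ a $\gamma$-perturbation changes $\|Qv-w\|_2$ by less than $\tfrac12 c_0\sqrt m$, a union bound shows that all incompressible $v$ satisfy $\|Qv-w\|_2>\tfrac12 c_0\sqrt m$ except with probability $(3/\gamma)^k\exp(-c_1 m)+\exp(-a_2 m)$.

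For a compressible $v$, $v$ lies within $\kappa+\gamma$ of some $u$ supported on a set $T$ with $|T|=\lceil\rho k\rceil$ and $\|u\|\in[1-\kappa,1+\kappa]$, with $u$ ranging over a $\gamma$-net. Since $m=(1+\delta)k$ exceeds $\rho k$ comfortably when $\rho$ is small, the column submatrix $Q_T\sim M_{m\times\lceil\rho k\rceil}(p)$ is a genuinely tall random matrix, so by a net argument (or \thmref{thm:nonsymtail} applied to $Q_T^{\mathsf T}$ together with standard small-ball input) one has $s_{\min}(Q_T)\ge c_2\sqrt m$, and for the fixed $w$ one has $\mathrm{dist}(w,\mathrm{Im}\,Q_T)\ge\tfrac12\|w\|$, each failing with probability $\le\exp(-c_3 m)$; a union bound over the $\binom{k}{\lceil\rho k\rceil}\le\exp(H(\rho)k)$ choices of $T$ (with $H$ the binary entropy) controls all compressible directions at once. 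These two facts combine well despite the shift: writing $\Pi^\perp$ for the projection onto $(\mathrm{Im}\,Q_T)^\perp$, $\|Qu-w\|_2^2=\|Qu-\Pi^{\perp\!c}w\|_2^2+\|\Pi^\perp w\|_2^2\ge\tfrac14\|w\|^2$, while also $\|Qu-w\|_2\ge c_2\sqrt m\|u\|-\|w\|$; comparing the two according to whether $\|w\|\lessgtr\tfrac12 c_2\sqrt m\|u\|$ forces $\|Qu-w\|_2\gtrsim\sqrt m$ independently of $\|w\|$, and subtracting $\|Q\|(\kappa+\gamma)\le a_1\sqrt{p(1-p)m}\,(\kappa+\gamma)$ keeps this positive once $\kappa,\gamma$ are small. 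The main obstacle — and the genuine technical heart of \cite{LPRTV05} — is not any single one of these estimates but the bookkeeping that makes all the exponential error terms beat their combinatorial prefactors simultaneously: one needs $c_1(1+\delta)>\ln(3/\gamma)$ for the incompressible net and $c_3(1+\delta)>H(\rho)$ for the sparse-support union, yet $\gamma$ is itself tied to $c_0$ and hence to $\rho,\kappa$, and $c_1,c_3$ degrade as $\rho$ is pushed toward its extremes. Closing this loop requires choosing the compressibility threshold $\rho$ (and then $\kappa,\gamma$) in a narrow window dictated by $\delta$ and the subgaussian constants of $X_p$; granting that such a choice exists, assembling the compressible and incompressible bounds and renaming constants yields the stated $\alpha=\alpha(p,\delta)$ and $\beta=\beta(p)$.
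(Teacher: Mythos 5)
Your plan follows the compressible/incompressible route of \cite{LPRTV05,LPRT05}, which is not the route written out in this paper: here the general statement is simply cited, and the self-contained argument given (a single $\gamma$-net, the operator-norm bound of \thmref{thm:nonsymtail}, and the Paley--Zygmund small-ball estimate of Lemma~\ref{lem:paleyzygmund}) is explicitly claimed only for restricted pairs $(p,\delta)$, e.g. $1+\delta\ge 1000$ at $p=\tfrac12$. The trouble is that your incompressible branch, as written, has exactly the same strength as that restricted argument, so it cannot deliver the theorem for all $\delta>0$. You invoke a \emph{fixed-scale} L\'evy bound $\sup_a\Pr(|\langle R_j,v\rangle-a|\le t_0)\le 1-\eta$ with $t_0,\eta$ depending only on $p,\rho,\kappa$, tensorize to $\Pr(\|Qv-w\|_2\le c_0\sqrt m)\le e^{-c_1m}$ with $c_1=c_1(p,\rho,\kappa)$, and then need $c_1(1+\delta)>\ln(3/\gamma)$, where the perturbation step forces $\gamma\lesssim c_0/(a_1\sqrt{p(1-p)})$, a constant less than $1$. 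These constants are bounded independently of $\delta$: at a scale $t_0\asymp\sigma$ (with $\sigma^2\asymp\rho'\,p(1-p)$ the variance of the spread part) the worst shift $w_j$ sits at the mode, so the per-row ``small'' probability $1-\eta$ is bounded below by an absolute constant and $c_1$ is bounded above by one, while $\ln(3/\gamma)\ge\ln 3$ and in practice is several units. Tuning $\rho,\kappa,\gamma$ does not help, and shrinking $t_0$ only hurts under a bound that does not improve with $t_0$. So ``granting that such a choice exists'' assumes precisely the crux; for small $\delta$ no such choice exists with the estimates you stated.

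The missing idea is the \emph{scale-dependent} small-ball estimate --- ingredient (iii) in the paper's terminology, and the reason the paper must lean on \cite{LPRTV05} for general $\delta$. For spread $v$, Berry--Ess\'een gives $\sup_a\Pr\left(|\langle R_j,v\rangle-a|\le t\right)\le C\left(t/\sigma+(\rho k)^{-1/2}\right)$ for \emph{all} $t$, and tensorization upgrades this to $\Pr(\|Qv-w\|_2\le t\sqrt m)\le (C't/\sigma)^{cm}$ with exponent proportional to $m$ and a base that improves as $t\to0$. Taking the net mesh $\gamma\asymp t/a_1$, the union bound pits $(C't/\sigma)^{cm}$ against a net of size $(3a_1/t)^{k}$; since the gain scales with $m=(1+\delta)k$ and the cost with $k$, choosing $t=t(p,\delta)$ small enough wins for every fixed $\delta>0$, and this is exactly how $\alpha$ comes to depend on $\delta$ while $\beta$ depends only on $p$. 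Your compressible branch is essentially sound, modulo some hidden work: the bound $\mathrm{dist}(w,\mathrm{Im}\,Q_T)\ge\tfrac12\|w\|$ has failure exponent proportional to $\rho k$ and must beat the support entropy $H(\rho)k$ (doable, since the constant can be taken of order $1/\rho$), and $s_{\min}(Q_T)\gtrsim\sqrt m$ needs its own per-vector small-ball input, for which the fixed-scale Lemma~\ref{lem:paleyzygmund} does suffice because $Q_T$ has large aspect ratio. But without the linear-in-$t$ estimate in the incompressible branch, the argument proves only a restricted-$\delta$ statement, not Theorem \ref{thm:LPRTV}.
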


Strictly speaking the results in \cite{LPRTV05} only apply for
symmetric random variables (and thus only to $X_{1/2}$ in our
setting).  Note, however, that the proof of the stated bound in \cite{LPRTV05} is
based on three ingredients: (i) The exponential tail bound on the
operator norm from Theorem \ref{thm:nonsymtail}; (ii) A
small-ball probability estimate that is based on the Paley-Zygmund
inequality; and (iii) A small-ball probability estimate, based on
a Berry-Ess\'een-type inequality. We note that step (iii)
does not require a symmetry assumption (see, e.g. \cite[\S 2.1]{Stroock93}).

In order to derive explicit bounds on $\alpha$ and $\beta$, we
now give a simpler proof of \cite[Thm 3.3]{LPRTV05} (which proof can essentially
be read off from \cite{LPRTV05}) that works
when $p$ and $\delta$ are large enough, and uses only (ii).
For instance, we get non-trivial results for random graphs
in $G(n,p)$ when $p \in [0.18, 0.78]$.  In general,
this does not require $\delta$ to be small; for instance,
the simpler proof yields positive values for $\alpha$ and
$\beta$ when $p=1/2$ and $1+\delta \geq 1000$,
which is sufficient for the applications in
Section \ref{sec:domains}.

These estimates are only used in the appendix
for calculating explicit upper bounds on the
number of exceptional vertices, whereas in the rest
of the paper we use the statement
of Theorem \ref{thm:LPRTV} above.  Nevertheless,
the proof is fairly straightforward,
and carries pedagogical value for readers
not familiar with such techniques.
First, we recall the Payley-Zygmund inequality \cite{PZ32}.

\begin{lemma}\label{lem:PZ}
  For any positive random variable $Z$ and any $0 \leq \lambda \leq 1$
  \[
  \Pr \left( Z \geq \lambda \mathbb E (Z) \right) \geq (1 - \lambda)^2
  \frac{\left(\mathbb E Z\right)^2}{\mathbb E (Z^2)}.
  \]
\end{lemma}

\remove{
The following inequality appears in \cite[\S 6.3]{LT}:

\begin{lemma}[Standard symmetrization
  inequality]\label{lem:symmetrization}
  Let $F:\mathbb R_+ \rightarrow \mathbb R_+$ be a convex function,
  $\varepsilon_1, \ldots, \varepsilon_n$ independent Rademacher random
  variables and $X_1, \ldots, X_n$ independent random variables with
  mean zero. Then
  \[
  \mathbb E F \left( \left| \sum_i X_i \right| \right) \leq \mathbb E
  F \left( 2 \left| \sum_i \varepsilon_i X_i \right| \right)~.
  \]
\end{lemma}
We next recall Khinchine's inequality (e.g., \cite{Young76}):
\begin{lemma}[Khinchine's inequality]\label{lem:khinchine}
  Let $\varepsilon_1, \ldots, \varepsilon_n$ be independent Rademacher
  random variables. Let $0 < p < \infty$ and $x_1, \ldots, x_n \in
  \mathbb C$. Then
  \[
  \left( \mathbb E \left| \sum_i \varepsilon_i x_i \right|^q
  \right)^{1/q} \leq B_q \left( \sum_i |x_i|^2 \right)^{1/2}
  \]
  where $B_q$ is a constant that depends only on $q$.
\end{lemma}
}

We can now prove (ii), specialized to the random variables $X_p$.
\begin{lemma}\label{lem:paleyzygmund}
  Let $Y_1, \ldots, Y_n$ be independent copies of $X_p$ and let $a_1,
  \ldots, a_n \in \mathbb R$ satisfy $\sum a_i^2 = 1$, then for
  every $s \in \mathbb R$ and every $0 \leq \eta \leq 1$
  \[
  \Pr \left( \left| \sum_{i=1}^n a_i Y_i - s \right| > \eta \sqrt{
      p(1-p) } \right) > C (1 - \eta^2)^2
  \]
  where $C = \frac{p^2(1-p)^2}{128q^4}$ and $q = \max \left\{ p, 1-p
  \right\}$.
\end{lemma}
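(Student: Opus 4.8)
The plan is to apply the Paley--Zygmund inequality (\lemref{lem:PZ}) to the non-negative random variable $(Z-s)^2$, where $Z:=\sum_{i=1}^n a_iY_i$, and to exploit the fact that the usefulness of that inequality is governed by the ratio $\bigl(\mathbb E(Z-s)^2\bigr)^2/\mathbb E(Z-s)^4$, which can be controlled \emph{uniformly in $s$}. First I record the moments of $Z$. Since $\mathbb E X_p=0$ and the $Y_i$ are independent with $\sum_i a_i^2=1$, we get $\mathbb E Z=0$ and $\mathbb E Z^2=\sum_i a_i^2\,\mathbb E Y_i^2=p(1-p)$, hence $\mathbb E(Z-s)^2=p(1-p)+s^2$ for every $s\in\mathbb R$. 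For the fourth moment I would use the subgaussian estimate already in hand: by \lemref{lem:subgaussiantail} and independence, $\mathbb E e^{tZ}=\prod_i \mathbb E e^{ta_iY_i}\le\prod_i e^{q^2t^2a_i^2/2}=e^{q^2t^2/2}$, so the standard Chernoff bound gives $\Pr(|Z|\ge u)\le 2e^{-u^2/(2q^2)}$, and therefore
\[
\mathbb E Z^4=\int_0^\infty 4u^3\,\Pr(|Z|\ge u)\,du\ \le\ \int_0^\infty 8u^3 e^{-u^2/(2q^2)}\,du\ =\ 16q^4 .
\]

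Next I would invoke Paley--Zygmund. Put $\lambda:=\dfrac{\eta^2 p(1-p)}{\mathbb E(Z-s)^2}$. Because $\mathbb E(Z-s)^2=p(1-p)+s^2\ge p(1-p)$ we have $0\le\lambda\le\eta^2\le 1$, so \lemref{lem:PZ} applied to $(Z-s)^2$ is legitimate and yields
\[
\Pr\!\left(|Z-s|\ge\eta\sqrt{p(1-p)}\right)=\Pr\!\left((Z-s)^2\ge\lambda\,\mathbb E(Z-s)^2\right)\ \ge\ (1-\lambda)^2\,\frac{\bigl(\mathbb E(Z-s)^2\bigr)^2}{\mathbb E(Z-s)^4}\ \ge\ (1-\eta^2)^2\,\frac{\bigl(\mathbb E(Z-s)^2\bigr)^2}{\mathbb E(Z-s)^4},
\]
using that $t\mapsto(1-t)^2$ is decreasing on $[0,1]$.

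The one point that needs care --- and the only real obstacle --- is to lower bound $\bigl(\mathbb E(Z-s)^2\bigr)^2/\mathbb E(Z-s)^4$ uniformly in $s$, since a priori $\mathbb E(Z-s)^4$ grows with $|s|$. The key observation is that this ratio is minimized at $s=\mathbb E Z=0$. Concretely, the elementary inequality $(u+v)^4\le 8(u^4+v^4)$ gives $\mathbb E(Z-s)^4\le 8\bigl(\mathbb E Z^4+s^4\bigr)\le 8\bigl(16q^4+s^4\bigr)$, and, writing $x=s^2\ge 0$, it then suffices to check $\dfrac{\bigl(p(1-p)+x\bigr)^2}{16q^4+x^2}\ \ge\ \dfrac{\bigl(p(1-p)\bigr)^2}{16q^4}$; clearing denominators, this is $x^2\bigl(16q^4-(p(1-p))^2\bigr)+32q^4\,p(1-p)\,x\ \ge\ 0$, which holds because $p(1-p)\le q^2\le 16q^4$. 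Combining the last three displays gives $\Pr\!\left(|Z-s|\ge\eta\sqrt{p(1-p)}\right)\ge (1-\eta^2)^2\cdot\frac{(p(1-p))^2}{8\cdot 16\,q^4}=C(1-\eta^2)^2$; the strict inequality and open event in the statement are recovered from the genuine slack in $\mathbb E Z^4\le 16q^4$ (indeed $\mathbb E Z^4\le p(1-p)<16q^4$) when $\eta<1$, the case $\eta=1$ amounting to the non-degeneracy of $Z$. Apart from this $s$-uniformization, every step is a routine moment or tail computation, so I do not anticipate further difficulty.
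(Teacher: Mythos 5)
Your proof is correct and follows essentially the same route as the paper's: Paley--Zygmund applied to the squared, shifted sum, with $\mathbb E(Z-s)^2=p(1-p)+s^2$, the fourth moment bounded by $16q^4$ via the subgaussian tail integral, the $(a-b)^4\le 8(a^4+b^4)$ step, and a uniformization over $s$ showing the moment ratio is smallest at $s=0$. The only differences are cosmetic: you rescale the Paley--Zygmund threshold $\lambda$ instead of relaxing the event afterward, and you verify the minimization at $s=0$ by clearing denominators rather than by the paper's brief calculus remark, which is if anything slightly cleaner.
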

\begin{proof}

Let $X = \sum_{i=1}^n a_i Y_i - s$.  By independence, one has
$$
\mathbb E[X^2] = \sum_{i=1}^n a_i^2 \mathbb E(Y_i^2) + s^2 = p(1-p) + s^2.
$$
Also, setting $Y = \sum_{i=1}^n a_i Y_i = X+s$,
we have
\begin{eqnarray*}
\mathbb E[Y^4] &=& 4 \int_{0}^\infty t^3 \Pr\left(\left| \sum_{i=1}^n a_i Y_i \right| > t\right)\,dt \\
&\leq& 8 \int_0^{\infty} t^3 e^{-t^2/2q^2} \,dt = 16q^4 .
\end{eqnarray*}
This inequality follows from the subgaussian tail bound in
\lemref{lem:subgaussiantail}. For $\Pr \left( \sum a_i Y_i > t
\right)$ we use the result for $X_p$ and for $\Pr \left( - \sum a_i
Y_i > t \right)$ we use the result for $-X_p =
X_{1-p}$. Here it is convenient to reformulate \lemref{lem:subgaussiantail}
by saying that $\mathbb E e^{tX_p} \leq e^{q^2 t^2 / 2}$ for all $0 \le p \le 1$.

Using  $(a-b)^4 \le 8(a^4 + b^4)$, we have
$$
\mathbb E[X^4] \leq 8(\mathbb E[Y^4] + s^4) \leq 128q^4 + 8 s^4.
$$

\remove{
\bigskip
  Let $X = \left( \sum_{i=1}^n a_i Y_i - s \right)^2$. Calculate
  $\mathbb E (X)$ and $\mathbb E (X^2)$ in order to use
  \lemref{lem:PZ}.

  \[
  \mathbb E (X) = \sum_i a_i^2 \mathbb E (Y_i^2) + s^2 = p(1-p) + s^2
  \]
  To estimate $\mathbb E X^2$ we need the following: Let
  $\varepsilon_1, \ldots, \varepsilon_n$ be independent Rademacher
  random variables. Then by \lemref{lem:symmetrization} we have
  \begin{equation}\label{eq:symmetrization}
    \mathbb E \left( \sum_{i=1}^n a_i Y_i \right)^4 \leq 16 \mathbb E
    \left( \sum_{i=1}^n \varepsilon_i a_i Y_i \right)^4~.
  \end{equation}
  Condition on $Y_1, \ldots, Y_n$ and take the expectation with
  respect to $\varepsilon_1, \ldots, \varepsilon_n$. Then by
  \lemref{lem:khinchine} we have
  \begin{equation}\label{eq:khinchine}
    \mathbb E_\varepsilon \left( \sum \varepsilon_i a_i Y_i \right)^4 \leq
    B_4^4 \left( \sum a_i^2 Y_i^2 \right)^2~.
  \end{equation}
  Combine \eqref{eq:symmetrization} and \eqref{eq:khinchine} to get
  \[
  \mathbb E \left( \sum a_i Y_i \right)^4 \leq 16 B_4^4 \mathbb E
  \left( \sum a_i^2 Y_i^2 \right)^2 = 16 B_4^4 p(1-p)(p^3 + (1-p)^3)
  \sum a_i^4 \leq 16 B_4^4 p(1-p) (p^3 + (1-p)^3)
  \]

  Now we are ready to calculate $\mathbb EX^4$:
  \[
  \mathbb E X^4 = \mathbb E \left( \sum a_i Y_i - s \right)^4 \leq 8
  \left( \mathbb E \left( \sum a_i Y_i \right)^4 + s^4 \right) \leq
  128 B_4^4 p(1-p)(p^3 + (1-p)^3) + 8s^4~.
  \]
  $B_4$ is the Khinchine constant. In \cite{Young76} it is proved that
  the best constant $B_q$ for every $q \geq 3$ is
  \[
  B_q = 2 \tfrac12 \left\{ \tfrac{\Gamma \left( \tfrac12 (q + 1)
      \right)}{\Gamma \left( \tfrac12 \right)} \right\}^{1/q}~.
  \]
  Using $\Gamma (z + 1) = z \Gamma (z)$ gives $ \tfrac{\Gamma \left(
      \tfrac52 \right)}{\Gamma \left( \tfrac12 \right)} = \tfrac34$,
  so $B_4 = \tfrac52 \left( \tfrac34 \right)^{1/4}$ and $B_4^4 =
  \tfrac{1875}{64}$.

  ?????  I think you have misquoted the result (I looked up Haagerup's
  paper from Studia Math. 1982). Please check ?????

  ?????  I could not get my hands on the original Haagerup paper, but
  the paper I quoted, \cite{Young76}, is earlier (1976). I did find a
  new paper of Haagerup and Musat: ``On the Best Constants in
  Noncommutative Khintchine-Type Inequalities'' (Haagerup has it on
  his web page) and there in the introduction they say that
  \cite{Young76} computed $B_q$ for $q \geq 3$ and the paper you
  mentioned computes $A_q$ and $B_q$ for the remaining cases ($A_q$ is
  the constant in the lower bound). ???????
}

  We can now apply \lemref{lem:PZ} with $Z = X^2$ and conclude that
  \[
  \Pr \left(X^2 \geq \lambda (p(1-p) + s^2) \right) \geq (1-\lambda)^2
  \frac{(p(1-p) + s^2)^2}{128q^4 + 8s^4}
  \]
  This last expression has a single local minimum at $s=0$ and tends to $1/8$
  as $s \rightarrow \infty$. Its value when $s=0$ is
  $\frac{p^2(1-p)^2}{128q^4}$ ($= C$) which is at most $\tfrac1{128} < \tfrac18$.

  Thus we have
  \[
  \Pr \left( X^2 \geq \lambda p(1-p) \right) \geq \Pr \left( X^2 \geq
    \lambda(p(1-p) + s^2) \right) \geq C (1-\lambda)^2
  \]

  Letting $\eta = \sqrt{\lambda}$ gives us the desired result.
\end{proof}

\remove{
\begin{lemma}
 For every $p > 0$, there exists a constant $C(p)$ such that the
 following holds.  Let $s \in \mathbb R$ and $a_1, \ldots, a_n \in
 \mathbb R$ be such that $\sum_{i=1}^n a_i^2 = 1$.  If $Y_1, \ldots,
 Y_n$ are i.i.d. copies of $X_p$ and if $Y = |\sum_{i=1}^n a_i Y_i -
 s|$, then
$$\mathbb E[Y^4] \leq C(p) (\mathbb E[Y^2])^2.$$
\end{lemma}
}

\remove{
\begin{proof}
 We have $\mathbb E[Y^4] = \mathbb E [|\sum_{i=1}^n a_i Y_i -s |^4]
 \leq s^4 + \mathbb E[|\sum_{i=1}^n a_i Y_i|^4]$.  Bound the latter
 term by
\begin{eqnarray*}
 \mathbb E\left[\left|\sum_{i=1}^n a_i Y_i\right|^4\right] &=&
 4 \int_{0}^{\infty} \lambda^3 \Pr\left(\left|\sum_{i=1}^n a_i Y_i\right| > \lambda\right)\,d\lambda \\
 &\leq &
 4 \int_0^{\infty} \lambda^3 (2\,e^{-\lambda^2/2})\,d\lambda = 16,
\end{eqnarray*}
where the second line follows immediately from Hoeffding's inequality.
Thus $\mathbb E[Y^4] \leq s^4 + 16$, while $\mathbb E[Y^2] = p(1-p) +
s^2$.
\end{proof}
}

We now present a proof of Theorem \ref{thm:LPRTV} which only
works for certain values of $p,\delta$ in order
to calculate explicit upper bounds in the appendix.

\begin{proof}[Proof of Theorem \ref{thm:LPRTV} holding
only for certain values of $p,\delta$]
  We are seeking an upper bound for
  \[
  \Pr \left( \exists x \in S^{k-1} \textrm{ s.t. } \|Ax - w \|_2 \leq
    \alpha \sqrt{m} \right)
  \]
  for $A \sim M_{m \times k}(p)$. We separately estimate this
  probability depending on whether $\| A \| > a_1 \sqrt{m}$ or not,
  where $a_1$ is the constant from Theorem \ref{thm:nonsymtail}. An
  upper bound on the probability that $\| A \| > a_1 \sqrt{m}$ is
  given in \thmref{thm:nonsymtail}. For the complementary case we use
  an $\gamma$-net $N$ on $S^{k-1}$. Let
  $x \in N$ and $w \in \mathbb R^m$. Let $f_i = | \sum_{j=1}^k A_{ij}
  x_j - w_i |$ for every $i \leq m$. By \lemref{lem:paleyzygmund} we
  have
 \[
 \Pr \left( f_i > \eta \right) > C \left( 1 - \tfrac{\eta^2}{p(1-p)}
 \right)^2~.
 \]
 Define $b = C \left(1 - \frac{\eta^2}{p(1-p)}\right)^2$.

 Let us bound the probability that $\| Ax - w \|^2 \leq c^2 m$ for
 some constant $c$ to be chosen later.
\begin{eqnarray*}
 \Pr \left( \| Ax - w \|^2 \leq c^2 m \right) &=& \Pr \left(
   \sum_{i=1}^m f_i^2 \leq c^2 m \right) \\
 &=& \Pr \left( m - \frac1{c^2} \sum_{i=1}^m f_i^2 \geq 0 \right) \\
 &=& \Pr
 \left( \exp \left( \tau m - \frac{\tau}{c^2} \sum_{i=1}^m f_i^2
   \right) \geq 1 \right) \\
 &\leq& \mathbb E \left( \exp \left( \tau m - \frac{\tau}{c^2}
     \sum_{i=1}^m f_i^2 \right) \right) = e^{\tau m} \prod_{i=1}^m
 \mathbb E \left( \exp \left( -\frac{\tau f_i^2}{c^2} \right) \right),
\end{eqnarray*}
 for every $\tau > 0$.

 For every $i \leq m$
 \begin{eqnarray*}
   \mathbb E \left( \exp \left( -\frac{\tau f_i^2}{c^2} \right) \right) &
   = & \int_0^1 \Pr \left( \exp \left( -\frac{\tau f_i^2}{c^2} \right) >
     t \right) dt \\
   & \leq & \int_0^{e^{-\tau \eta^2 / c^2}} dt + \int_{e^{-\tau \eta^2 /
       c^2}}^1 (1 - b) dt \\
   & = & e^{-\tau \eta^2 / c^2} + (1 - b)
   \left( 1 - e^{-\tau \eta^2 / c^2} \right) = 1 - b \left( 1 -
     e^{-\tau \eta^2 / c^2} \right)
 \end{eqnarray*}
 so we have
 \[
 \Pr \left( \| Ax - w \|^2 \leq c^2 m \right) \leq e^{\tau m} \left(
   1 - b \left( 1 - e^{-\tau \eta^2 / c^2} \right) \right)^m \leq
 e^{\tau m - b \left( 1 - e^{-\tau \eta^2 / c^2} \right)m}
 \]
 This expression is minimized for
 \[
 \tau = \frac{c^2}{\eta^2} \ln \frac{b \eta^2}{c^2}~.
 \]
 For $\tau$ to be positive (as it should), we must have $c < \sqrt{b}
 \eta = \sqrt{C} \left(1 - \frac{\eta^2}{p(1-p)} \right) \eta$.
 Letting $\eta =
 \sqrt{\frac{p(1-p)}3}$ gives
 \[
 b = \frac{4C}9~.
 \]
 Let
 \[
 c = (1 - \theta) \frac23 \sqrt{ \frac{p(1-p) C}3}
 \]
 for some $0 < \theta \leq 1$, which gives
 \[
 \tau = \frac49 (1-\theta)^2 C \ln
 \frac1{(1-\theta)^2}~.
 \]
 Using these values for $\eta, b, \tau$ and $c$ gives
 \[
 \Pr \left( \| Ax - w \|^2 \leq c^2 m \right) \leq \exp \left(
   -\frac{4C}9 \left[ 1 + (1-\theta)^2 \left( 2 \ln
       (1-\theta) - 1 \right) \right] m \right)
 \]

 The size of an $\gamma$-net on $S^{k-1}$ is at most $(1 + 2/
 \gamma )^k \leq \left( \frac3{\gamma} \right)^k$
 assuming $\gamma \leq 1$. Repeating this argument for every
 $x \in N$ and using the union bound we get that
\begin{eqnarray*}
 \Pr \left( \exists x \in N \textrm{ s.t. } \|Ax - w \|_2 \leq c
   \sqrt{m} \right) &\leq& \exp \left( -\frac{4C}9 \left[ 1 +
     (1-\theta)^2 \left( 2 \ln (1-\theta) - 1 \right)
   \right] m + k \ln \frac3{\gamma} \right) \\
 &=& \exp \left( - \left\{ \frac{4C}9 \left[ 1 + (1-\theta)^2
       \left( 2 \ln (1-\theta) - 1 \right) \right] - \frac{\ln
       \frac3{\gamma}}{1+\delta} \right\} m \right)
\end{eqnarray*}
 Now we seek a constant $\alpha$ such that if (i) $\| A \| \le a_1
 \sqrt{m}$, and (ii) there exists $x \in S^{k-1}$ with $\| Ax - w \|_2
 \leq \alpha \sqrt{m}$, then $\exists x' \in N$ with $\| Ax' - w \|_2
 \leq c \sqrt{m}$. If $x' \in N$ is chosen so that $\| x - x' \|_2
 \leq \gamma$, then
 \[
 \| Ax' - w \|_2 = \| Ax - w + A(x' - x) \|_2 \leq \| Ax - w \|_2 + \|
 A(x' - x) \|_2 \leq \alpha \sqrt{m} + \gamma a_1 \sqrt{m}~.
 \]

 By Theorem \ref{thm:nonsymtail} we have $\Pr ( \| A \| > a_1
 \sqrt{m}) \leq e^{-a_2 m}$, so taking
 \begin{equation}\label{eq:alpha}
   \alpha = (1 - \theta) \frac23 \sqrt{ \frac{p(1-p)C}3} -
   \gamma a_1
 \end{equation}
 and
 \begin{equation}\label{eq:beta}
   \beta = \min \left( a_2, \frac{4C}9 \left[ 1 + (1-\theta)^2
       \left( 2 \ln (1-\theta) - 1 \right) \right] - \frac{\ln
       \frac3{\gamma}}{1+\delta} \right)
 \end{equation}
 gives us the desired result.
\end{proof}

This preceding argument yields the conclusion
of Theorem \ref{thm:LPRTV} as long as there exists
a choice of $\gamma,\theta \in (0,1)$ such that
both $\alpha$ and $\beta$ are positive.
These bounds are used in Appendix A to
show that our techniques, if analyzed
in gory detail, yield reasonable
bounds on the number of nodal domains
for various values of $p$.

\medskip

The next result follows from taking a union bound in Theorem
\ref{thm:LPRTV}.

\begin{corollary}\label{cor:LPRTV}
 Maintaining the notations of Theorem \ref{thm:LPRTV}, it is moreover
 true that
$$\Pr\left[\exists c \in \mathbb R, \exists v \in
 S^{k-1} \textrm{ s.t. } \|Qv - cw\|_2 \leq \alpha \sqrt{m}\right]
\leq \exp(-\beta m)$$ for all sufficiently large $k$, and every $w \in
\mathbb R^m$.
\end{corollary}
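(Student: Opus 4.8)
The plan is to discretize the free scalar $c$ and reduce to Theorem~\ref{thm:LPRTV} via a union bound, as the remark preceding the corollary suggests. First I would dispose of the degenerate case $w=0$: there $cw=0$ for every $c$, so the event in question is literally the event of Theorem~\ref{thm:LPRTV} with the vector $0$, and we are done. So assume henceforth $w\neq 0$ and set $W=\|w\|_2>0$.

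The first substantive step is to confine the range of $c$. Let $E_0=\{\|Q\|\leq a_1\sqrt{p(1-p)}\sqrt{m}\}$, which by Theorem~\ref{thm:nonsymtail} satisfies $\Pr[E_0^c]\leq e^{-a_2 m}$. On $E_0$, if some $v\in S^{k-1}$ and $c\in\mathbb R$ satisfy $\|Qv-cw\|_2\leq\tfrac{\alpha}{2}\sqrt m$, then
\[
|c|\,W \;\leq\; \|Qv\|_2+\tfrac{\alpha}{2}\sqrt m \;\leq\; \bigl(a_1\sqrt{p(1-p)}+\tfrac{\alpha}{2}\bigr)\sqrt m ,
\]
so $c$ lies in the interval $[-R,R]$ with $R=(a_1\sqrt{p(1-p)}+\tfrac{\alpha}{2})\sqrt m/W$.

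Next I would place a net $\mathcal N\subseteq[-R,R]$ of spacing $\eta=\tfrac{\alpha}{2}\sqrt m/W$. The crucial observation is that $R$ and $\eta$ both scale like $\sqrt m/W$, so $|\mathcal N|\leq 1+2R/\eta$ is a constant depending only on $p$ and $\delta$ (through $a_1$ and $\alpha$), independent of both $n$ and $w$. Given $v$ and $c$ as above, choose $c'\in\mathcal N$ with $|c-c'|\leq\eta$; then
\[
\|Qv-c'w\|_2 \;\leq\; \|Qv-cw\|_2+|c-c'|\,W \;\leq\; \tfrac{\alpha}{2}\sqrt m+\eta W \;=\; \alpha\sqrt m .
\]
Hence $E_0\cap\{\exists c,v:\|Qv-cw\|_2\leq\tfrac{\alpha}{2}\sqrt m\}\subseteq\bigcup_{c'\in\mathcal N}\{\exists v\in S^{k-1}:\|Qv-c'w\|_2\leq\alpha\sqrt m\}$. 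Applying Theorem~\ref{thm:LPRTV} to each fixed vector $c'w$ and union-bounding over the $|\mathcal N|=O_{p,\delta}(1)$ points, together with $\Pr[E_0^c]\leq e^{-a_2 m}$, yields
\[
\Pr\bigl[\exists c\in\mathbb R,\ v\in S^{k-1}:\|Qv-cw\|_2\leq\tfrac{\alpha}{2}\sqrt m\bigr] \;\leq\; |\mathcal N|\,e^{-\beta m}+e^{-a_2 m},
\]
which for all sufficiently large $k$ is at most $e^{-\beta' m}$ for any fixed $\beta'<\min(\beta,a_2)$. Renaming $\alpha/2$ as $\alpha$ and $\beta'$ as $\beta$ gives the stated conclusion.

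I do not expect a genuine obstacle here: the argument is a routine net-plus-union-bound. The only two points needing a moment's care are (a) checking that $|\mathcal N|$ does not depend on $\|w\|_2$ — it does not, because the admissible interval for $c$ and the required net spacing both scale inversely with $\|w\|_2$, so the dependence cancels — and (b) the usual harmless degradation of the constants $\alpha,\beta$, which is absorbed into the exponent for large $k$; since all later applications only use the existence of \emph{some} positive $\alpha,\beta$, this loss is immaterial.
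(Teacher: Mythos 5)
Your argument is correct, and it is the same basic strategy as the paper's proof --- discretize the scalar $c$, use the triangle inequality to pass to a net point, and union bound over applications of Theorem~\ref{thm:LPRTV} --- but the two proofs confine and discretize $c$ differently. The paper normalizes $\|w\|_2=1$, uses the trivial deterministic bound $\|Qv\|_2\leq m$ to restrict $c\in[-2m,2m]$, and takes the integers in that interval as the net, so the union bound is over $O(m)$ points and costs an additive $\ln(4m+1)$ in the exponent (harmless, and absorbed as in your constant-renaming step). You instead invoke the operator-norm tail bound of Theorem~\ref{thm:nonsymtail} to restrict $|c|$ to an interval of length $O(\sqrt m/\|w\|_2)$ and choose the net spacing on the same scale, so your net has size $O_{p,\delta}(1)$ and you avoid any logarithmic loss, at the price of an extra failure event of probability $e^{-a_2 m}$; you also treat $w=0$ and unnormalized $w$ explicitly, which the paper elides by normalizing. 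Both routes degrade $(\alpha,\beta)$ only by constants, which is all that the later applications (Theorem~\ref{thm:l2mass} and its variants) require, so your version is a valid substitute; the paper's has the mild advantage of not relying on Theorem~\ref{thm:nonsymtail}, while yours gives a marginally cleaner exponent.
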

\begin{proof}
  We may assume that $\|w\|_2 = 1$.  Since clearly $\|Qv\|_2 \leq m$,
  it suffices to prove the bound for $c \in [-2m,2m]$.  Apply Theorem
  \ref{thm:LPRTV} for every integer $j \in [-2m,2m]$ and a union bound
  to conclude that
  \[
  \Pr \left( \exists \textrm{ an integer } j \in [-2m,2m] \textrm{ and } v
    \in S^{k-1} \textrm{ s.t. } \| Qv - j w \|_2 \leq \alpha \sqrt{m}
  \right) \leq \exp \left( -\beta m + \ln \left( 4m+1 \right) \right)
  \]
  But now for any $c \in [-2m,2m]$, let $j \in [-2m,2m]$ be the
  nearest integer.  For any matrix $Q$ and $v \in S^{k-1}$
  \[
  \| Qv - c w \|_2 \leq \| Qv - jw \|_2 + \| \left( j - c \right) w
  \|_2 \leq \| Qv - jw\|_2 + 1~.
  \]
\end{proof}

\section{Nodal domains}
\label{sec:domains}

\subsection{Eigenvectors are not too localized}

We show first that the restriction of an eigenvector to a large
set of vertices must have a substantial $\ell_2$ norm.

\begin{theorem}[Large mass on large subsets]
\label{thm:l2mass}
For every $p \in (0, 1)$ and every $\varepsilon > 0$, there
exist $\alpha = \alpha(\varepsilon,p) > 0$ and $\beta = \beta(p) > 0$ such that
for $n$ large enough, and for every {\em fixed} subset
$S \subseteq [n]$ of size $|S| \geq (\frac12+\varepsilon)n$,
$$
\Pr\left[ \exists \textrm{a non-first eigenvector $f$ of $G$ satisfying
$\|f|_S\|_2 < \alpha$}\right] \leq \exp(-\beta n)
$$
where the probability is over the choice of $G \sim G(n,p)$.
\end{theorem}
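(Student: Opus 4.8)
The plan is to convert the hypothesis ``$\|f|_S\|_2$ is small'' into the statement that the rectangular Bernoulli block of $A$ between $S$ and its complement maps some \emph{unit} vector very close to a fixed line, and then invoke Corollary~\ref{cor:LPRTV} to rule this out with exponentially small probability. First some reductions: since $\|f|_{S'}\|_2 \le \|f|_S\|_2$ whenever $S' \subseteq S$, it suffices to prove the bound when $|S| = \lceil(\tfrac12+\e)n\rceil$; set $T = [n]\setminus S$, $m = |S|$, $k = |T|$, so that $m = (1+\delta)k$ for a constant $\delta = \delta(\e) > 0$. Decompose the adjacency matrix as $A = pJ + B$, where $J$ is all-ones and $B$ has independent, mean-zero off-diagonal entries distributed (up to sign) as $X_p$; the blocks $B_{SS}$, $B_{TT}$, $B_{ST}$ depend on disjoint sets of edges, $-B_{SS}$ is distributed as $M_{|S|}^{\sym}(p)$, and $Q := -B_{ST} \sim M_{m\times k}(p)$. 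Let $\mathcal G$ be the intersection of the events (a) $\|B_{SS}\| \le 3\sqrt n$, (b) $\max_{i\ge 2}|\lambda_i(A)| \le 3\sqrt n$, and (c) $\lambda_1(A) > \lambda_2(A)$ (so $\lambda_1$ is simple and every non-first eigenvector has eigenvalue among $\lambda_2,\dots,\lambda_n$). By \thmref{thm:symtail}, \thmref{thm:furkom}, and the elementary concentration of $\lambda_1(A)\ge 2|E|/n \approx pn$, we have $\Pr[\mathcal G^c] \le \exp(-\Omega(n))$, with the constant in the exponent depending only on $p$.

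Now work on $\mathcal G$ and suppose some non-first eigenvector $f$, $Af = \lambda f$, $\|f\|_2 = 1$, satisfies $\|f|_S\|_2 < \alpha$ for a small constant $\alpha=\alpha(\e,p)$ to be fixed. Then $|\lambda|\le 3\sqrt n$ and $\|f|_T\|_2 > \sqrt{1-\alpha^2} > \tfrac12$, so $f|_T \ne 0$. Restricting the identity $Af = pJf + Bf = \lambda f$ to the coordinates in $S$ yields
$$B_{ST}\,f|_T \;=\; (\lambda I - B_{SS})\,f|_S \;-\; p\langle \1, f\rangle\,\1_S ,$$
where $\1_S \in \R^{|S|}$ is the all-ones vector. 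Put $v = f|_T/\|f|_T\|_2 \in S^{k-1}$ and $c = -\,p\langle\1,f\rangle/\|f|_T\|_2 \in \R$; then $Qv - c\1_S = -\|f|_T\|_2^{-1}(\lambda I - B_{SS})f|_S$, so
$$\|Qv - c\,\1_S\|_2 \;\le\; \frac{|\lambda| + \|B_{SS}\|}{\|f|_T\|_2}\,\|f|_S\|_2 \;<\; 12\,\alpha\sqrt n \;\le\; \frac{12\alpha}{\sqrt{1/2+\e}}\,\sqrt m .$$
Choosing $\alpha$ so that $12\alpha/\sqrt{1/2+\e} \le \alpha^\ast$, where $\alpha^\ast = \alpha^\ast(p,\delta)$, $\beta^\ast = \beta^\ast(p)$ are the constants of Corollary~\ref{cor:LPRTV} (and capping $\alpha < \tfrac{\sqrt3}{2}$), we conclude that $\mathcal G \cap \{\exists\text{ non-first }f:\|f|_S\|_2<\alpha\}$ is contained in the event $\{\exists v \in S^{k-1},\, c\in\R : \|Qv - c\,\1_S\|_2 \le \alpha^\ast \sqrt m\}$. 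Since $\1_S$ depends only on the \emph{fixed} set $S$, Corollary~\ref{cor:LPRTV} bounds the probability of this last event by $\exp(-\beta^\ast m) \le \exp(-\beta^\ast(\tfrac12+\e)n)$. Hence
$$\Pr\big[\exists\text{ non-first }f:\|f|_S\|_2<\alpha\big] \;\le\; \Pr[\mathcal G^c] + \exp(-\beta^\ast m) \;\le\; \exp(-\beta n)$$
for a suitable $\beta = \beta(p) > 0$; note that no independence between $\mathcal G$ and the $Q$-event is used, only the event inclusion just established.

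The main obstacle, and the reason the argument is arranged this way, is the last step: one genuinely needs a small-ball/invertibility estimate with an \emph{exponentially} small failure probability, and (as the introduction notes) the known bounds for symmetric random matrices are too weak. The decomposition $A = pJ+B$ together with the restriction to the $S\times T$ block is exactly what replaces the symmetric matrix by the rectangular Bernoulli matrix $M_{m\times k}(p)$ with aspect ratio bounded away from $1$ (thanks to $|S|\ge(\tfrac12+\e)n$), so that the sharp rectangular estimate of Corollary~\ref{cor:LPRTV} applies. A secondary subtlety is that peeling off the rank-one part $pJ$ is essential -- $\|A_{SS}\|$ itself is of order $n$, which is why we must subtract $pJ$ and absorb the residual term $p\langle\1,f\rangle\1_S$ into the free scalar $c$ that Corollary~\ref{cor:LPRTV} allows -- and one must check that the aspect ratios $m/k$ arising from the floors stay in a compact subinterval of $(0,\infty)$, so that $\alpha^\ast(p,\delta)$ is bounded below uniformly in $n$. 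Everything else (the operator-norm tails and the trivial bound $|\lambda|\le 3\sqrt n$ for non-first eigenvalues) is routine.
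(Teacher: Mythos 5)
Your proof is correct and takes essentially the same route as the paper's: subtract $pJ$ from $A$, restrict the eigenvector equation to the rows indexed by $S$, control the non-first eigenvalue and the symmetric $S\times S$ block via Theorems \ref{thm:furkom} and \ref{thm:symtail}, and apply Corollary \ref{cor:LPRTV} to the rectangular $S\times T$ Bernoulli block with the free scalar $c$ absorbing the $p\langle \1,f\rangle\1_S$ term. The only slip is the sign of $c$ (it should be $+p\langle\1,f\rangle/\|f|_T\|_2$), which is immaterial since $c$ ranges over all of $\R$; your reduction to $|S|=\lceil(\tfrac12+\e)n\rceil$ and the explicit good event are harmless refinements of the paper's argument.
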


\begin{proof}
Let $A$ be the adjacency matrix of $G = (V,E)$, and let $f : V \to \mathbb R$
be a non-first eigenvector of $G$ with eigenvalue $\lambda$.
Assume w.l.o.g. that $\alpha \leq \frac12$, and let
$S \subseteq V$ be as in the theorem.

For every $x \in S$, the eigenvector condition
$\lambda f(x) = f(\Gamma(x))$ implies that

$$
\left| \sum_{y \in V} A_{xy} f(y) \right| = |\lambda f(x)|.
$$

Or equivalently,
$$
\left| \sum_{y \in V} (p-A_{xy})f(y) -p  \sum_{y \in V} f(y) \right|
= |\lambda f(x)|.
$$
Squaring and summing over all $x \in S$ this yields
\begin{equation}\label{eq:smallsum}
\sum_{x \in S} \left|\sum_{y \in V} (p-A_{xy}) f(y) - p\sum_{y \in
V} f(y) \right|^2 = |\lambda |^2 \cdot \|f|_S\|^2_2.
\end{equation}
Let us define $M = pJ - A$, where $J$ is the $n\times n$ all ones matrix,
and let $B$ be the $|S| \times n$ sub-matrix of $M$ consisting of rows corresponding to vertices
in $S$, then \eqref{eq:smallsum} implies that $\|B f - p\bar f \1\|_2 = |\lambda| \cdot \|f|_S\|_2$,
where $\bar f = \langle f,\1 \rangle$.

Furthermore, if we decompose $B = [P~~Q]$ where $P$ contains the
columns corresponding to vertices in $S$, and $Q$ the others, then
clearly $P \sim M^{\mathrm{sym}}_{|S|}(p)$ and $Q \sim M_{|S| \times
(n-|S|)}(p)$. (It is useful to think of $P$ as corresponding
to the subgraph of $G$ induced by $S$, whereas $Q$ corresponds to
the bipartite graph corresponding to $S$ and its complement).
Since $Bf = P(f|_{S}) + Q(f|_{\overline S})$,
we can write
\begin{equation}
\label{eq:Q}
\|Q(f|_{\overline S}) - p\bar f \1 \|_2 \leq \|Bf - p\bar f \1\|_2 + \|P f|_S\|_2
= |\lambda| \cdot \|f|_S\|_2 + \|P(f|_S)\|_2
\end{equation}
The main point now is that, by Corollary \ref{cor:LPRTV}, $Q$ (a random rectangular matrix
with {\em all} entries being i.i.d. copies of $X_p$) is extremely unlikely
to map $f|_{\overline S}$ near the 1-dimensional subspace spanned by $\mathbf{1}$.
But we know from concentration results that if $f|_S$ has small 2-norm, then the
RHS will be small.  This leads to a contradiction.

Now, since $f$ is a non-first
eigenvector, by Theorems \ref{thm:furkom} and \ref{thm:symtail},
there exist constants $C = C(p), \beta' = \beta'(p) > 0$ such that
\begin{equation}\label{eq:betaprime}
\Pr\left[|\lambda| + \|P\| \geq C \sqrt{n}\right] \leq \exp(-\beta' n).
\end{equation}

If we assume that $|\lambda| + \|P\| \leq C \sqrt{n}$ and also
$\|f|_S\|_2 < \alpha \leq \frac12$, then
$a = \|f|_{\overline S}\|_2 = \sqrt{1 - \alpha^2} > \frac12$.
In this case, \eqref{eq:Q} implies that
\begin{equation}\label{eq:fbound}
\|Q(\tfrac{1}{a} f|_{\overline S}) - \tfrac{p}{a} \bar f \1\|_2 \leq 2C\sqrt{n}\alpha.
\end{equation}

Now let $k = n-|S|$. By Corollary \ref{cor:LPRTV},
for any $\delta \geq \frac{4\varepsilon}{1-2\varepsilon} > 0$,
there exists $\alpha,\beta > 0$ (with $\beta$ depending
only on $p$) such that for $n$ large enough,
\begin{equation}\label{eq:useLPRT}
\Pr_{Q \sim M_{k(1+\delta) \times k}(p)} \left[\exists v \in
S^{k-1}, \exists c \in \mathbb R \textrm{ s.t. } \|Q v - c
\1\|_2 \leq 2C\alpha\sqrt{n}\right] \leq \exp(-\beta n),
\end{equation}
but this implies that \eqref{eq:fbound} and $|\lambda| + \|P\| \leq C\sqrt{n}$
occur with probability
at most $\exp(-\beta n)$.
Taking a union bound over \eqref{eq:useLPRT} and \eqref{eq:betaprime}, we see that for some $\alpha > 0$,
$$
\Pr[\|f|_S\|_2 < \alpha] \leq \exp(-\beta n) + \exp(-\beta' n),
$$
where both $\beta, \beta' > 0$ depend only on $p$.  This completes the proof.
\end{proof}

We record the following simple corollary.

\begin{corollary}\label{cor:l2mass}
For every $p \in (0, 1)$ there exist $r = r(p) > 0$ and
$\varepsilon = \varepsilon(p)$ with $0 < \varepsilon < \frac12$
such that for almost all $G \sim G(n,p)$
and {\em every} subset $S \subseteq V(G)$
with $|S| \geq (\frac12+\varepsilon) n$ we have $\|f|_S\|_2 \geq r$.
\end{corollary}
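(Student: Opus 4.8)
The plan is to bootstrap Theorem~\ref{thm:l2mass} from a fixed subset to all subsets by a union bound, exploiting the freedom we have in choosing $\varepsilon$. First observe the trivial monotonicity $\|f|_{S'}\|_2 \le \|f|_S\|_2$ whenever $S' \subseteq S$, and that every $S$ with $|S|\ge(\tfrac12+\varepsilon)n$ contains a subset of size exactly $m:=\lceil(\tfrac12+\varepsilon)n\rceil$. Hence it suffices to produce constants $\varepsilon,r>0$ (depending only on $p$) such that a.a.s.\ $\|f|_S\|_2\ge r$ for every non-first eigenvector $f$ of $G$ and every $S\subseteq V(G)$ with $|S|=m$; the general case then follows by the monotonicity remark.

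Fix $p$ and let $\beta=\beta(p)>0$ be the constant supplied by Theorem~\ref{thm:l2mass} — the crucial feature is that it does not depend on $\varepsilon$. Since $n-m\le(\tfrac12-\varepsilon)n$ and $t\mapsto (en/t)^t$ is increasing on $(0,n)$, a standard estimate gives
\[
\binom{n}{m}=\binom{n}{n-m}\le\left(\frac{en}{n-m}\right)^{n-m}\le\exp\big(\phi(\varepsilon)\,n\big),\qquad \phi(\varepsilon):=\left(\tfrac12-\varepsilon\right)\ln\frac{e}{\frac12-\varepsilon},
\]
and $\phi(\varepsilon)\to 0$ as $\varepsilon\to\tfrac12^-$. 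Choose once and for all $\varepsilon=\varepsilon(p)\in(0,\tfrac12)$ with $\phi(\varepsilon)<\beta/2$, and let $\alpha=\alpha(\varepsilon,p)>0$ be the corresponding constant from Theorem~\ref{thm:l2mass}, which is now a function of $p$ alone. Applying that theorem to each of the $\binom{n}{m}$ fixed subsets $S$ of size $m\ge(\tfrac12+\varepsilon)n$ and taking a union bound,
\[
\Pr\big[\exists\, S\subseteq[n],\ |S|=m,\ \exists\ \textrm{non-first eigenvector }f\textrm{ of }G:\ \|f|_S\|_2<\alpha\big]\le\binom{n}{m}e^{-\beta n}\le e^{-\beta n/2}\longrightarrow 0.
\]
On the complementary event, by the reduction above, $\|f|_S\|_2\ge\alpha$ for every non-first eigenvector $f$ and every $S$ with $|S|\ge(\tfrac12+\varepsilon)n$, so we may take $r=r(p)=\alpha$. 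For the first eigenvector the inequality is immediate, since a.a.s.\ $f_1$ is within $o(1)$ of $\tfrac{1}{\sqrt n}\1$ in $\ell_2$ by standard facts about the Perron eigenvector of $G(n,p)$ (e.g.~\cite{FK}), whence $\|f_1|_S\|_2^2\ge |S|/n-o(1)>\tfrac12$; one then replaces $r$ by the minimum of the two bounds if the statement is wanted for all eigenvectors.

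I expect no real obstacle here — the argument is entirely routine. The only point requiring attention is that $\beta$ in Theorem~\ref{thm:l2mass} depends on $p$ alone and may be small, so a crude union bound over all $2^n$ subsets would be hopeless; the resolution, which is essentially the whole content of the corollary, is that forcing $|V\setminus S|$ to be only a small fraction of $n$ (i.e.\ taking $\varepsilon$ close to $\tfrac12$) makes the number of admissible subsets merely $e^{o(n)}$, a factor the exponentially small failure probability of Theorem~\ref{thm:l2mass} comfortably absorbs.
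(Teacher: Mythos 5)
Your proposal is correct and follows essentially the same route as the paper: a union bound of Theorem~\ref{thm:l2mass} over all large subsets, using the key fact that $\beta$ depends only on $p$ so that $\varepsilon$ can be pushed close to $\tfrac12$ to make the number of admissible subsets $o(\exp(\beta n))$. The extra details you supply (reduction to sets of size exactly $m$ by monotonicity, the explicit entropy estimate, and the separate treatment of the first eigenvector) are fine but not needed beyond what the paper's two-line argument already contains.
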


\begin{proof}
We need to take a union bound over all subsets $S \subseteq [n]$
with $|S| \geq (\frac12+\varepsilon) n$.  But for every value $\beta = \beta(p)$
from Theorem \ref{thm:l2mass}, there exists an $\varepsilon < \frac12$ such that
the number of such subsets is $o(\exp(\beta n))$, hence the union bound applies.
\end{proof}

\remove{
\begin{theorem}[Large mass on large subsets]
 \label{thm:l2mass}
 For every $p \in (0, 1)$ and every $\varepsilon > 0$, there exist
 $\frac{1}{2} \ge r = r(\varepsilon,p) > 0$ and $t = t(p) > 0$ such
 that for $n$ large
 enough, and for every {\em fixed} subset $S \subseteq [n]$ of size
 $|S| \geq (\frac12+\varepsilon)n$,
$$
\Pr\left[ \exists \textrm{a non-first eigenvector $f$ of $G$
   satisfying $\|f|_S\|_2 < r$}\right] \leq \exp(-t n)
$$
where the probability is over the choice of $G \sim G(n,p)$.
\end{theorem}

\begin{proof}
 Let $A$ be the adjacency matrix of $G = (V,E)$. Let $f : V \to
 \mathbb R$ be a non-first eigenvector of $G$ with eigenvalue
 $\lambda$, and let $S \subseteq V$ be as in the theorem.

 For every $x \in S$, the eigenvector condition $\lambda f(x) =
 f(\Gamma(x))$ implies that

$$
\left| \sum_{y \in V} A_{xy} f(y) \right| = |\lambda f(x)|.
$$

Or equivalently,
$$
\left| \sum_{y \in V} (p-A_{xy})f(y) -p \sum_{y \in V} f(y) \right| =
|\lambda f(x)|.
$$
Squaring and summing over all $x \in S$ this yields
\begin{equation}\label{eq:smallsum}
 \sum_{x \in S} \left(\sum_{y \in V} (p-A_{xy}) f(y) - p\sum_{y \in
     V} f(y) \right)^2 = |\lambda |^2 \cdot \|f|_S\|^2_2.
\end{equation}
Let us define $M = pJ - A$, where $J$ is the $n\times n$ all ones
matrix, and let $B$ be the $|S| \times n$ sub-matrix of $M$ consisting
of rows corresponding to vertices in $S$, then \eqref{eq:smallsum}
implies that $\|B f - p\bar f \1\|_2 = |\lambda| \cdot \|f|_S\|_2$,
where $\bar f = \langle f,\1 \rangle$.

Furthermore, if we decompose $B = [P~~Q]$ where $P$ contains the
columns corresponding to vertices in $S$, and $Q$ the others, then
clearly $P \sim M^{\mathrm{sym}}_{|S|}(p)$ and $Q \sim M_{|S| \times
 (n-|S|)}(p)$. (It is useful to think of $P$ as corresponding to the
subgraph of $G$ induced by $S$, whereas $Q$ corresponds to the
bipartite graph corresponding to $S$ and its complement).  Since $Bf =
P(f|_{S}) + Q(f|_{\overline S})$, we can write
\begin{equation}
 \label{eq:Q}
 \|Q(f|_{\overline S}) - p\bar f \1 \|_2 \leq \|Bf - p\bar f \1\|_2 + \|P f|_S\|_2
 = |\lambda| \cdot \|f|_S\|_2 + \|P(f|_S)\|_2.
\end{equation}

The main point now is that, by Corollary \ref{cor:LPRTV}, $Q$ (a
random rectangular matrix with {\em all} entries being i.i.d. copies
of $X_p$) is extremely unlikely to map $f|_{\overline S}$ near the
1-dimensional subspace spanned by $\mathbf{1}$.  But we know from
concentration results that if $f|_S$ has small 2-norm, then the RHS
will be small.  This leads to a contradiction.
We now make this argument precise.

\medskip

Since $f$ is a non-first eigenvector, by Theorems
\ref{thm:furkom} and \ref{thm:symtail}, for any $\xi_1, \xi_2 > 0$, taking
\[
D = 2\sqrt{p(1-p)} \left( 1 + \sqrt{ \tfrac12 + \varepsilon} \right) +
\xi_1 + \xi_2 \sqrt{ \tfrac12 + \varepsilon}
\]
and
\[
t' = \min \left( \frac{\xi_1^2}{32}, \frac{\xi_2^2 \left( \frac12
     + \varepsilon \right)}8 \right)
\]
gives
\begin{equation}\label{eq:betaprime}
 \Pr\left[|\lambda| + \|P\| \geq D \sqrt{n}\right] \leq \exp(-t' n).
\end{equation}

If we assume that $|\lambda| + \|P\| \leq D \sqrt{n}$ and also
$\|f|_S\|_2 < r \leq \frac12$, then $\rho = \|f|_{\overline S}\|_2 >
\sqrt{1 - r^2} > \frac12$.  In this case, \eqref{eq:Q} implies that
\begin{equation}\label{eq:fbound}
 \|Q(\tfrac{1}{\rho} f|_{\overline S}) - \tfrac{p}{\rho} \bar f \1\|_2 \leq 2D\sqrt{n}r.
\end{equation}

Now let $k = n-|S|$ and $\delta =
\tfrac{4\varepsilon}{1-2\varepsilon}$. By Corollary \ref{cor:LPRTV},
there exists $\alpha,\beta > 0$ (with $\beta$ depending only on $p$
and $\varepsilon$) such that for $n$ large enough,
\begin{equation}\label{eq:useLPRT}
 \Pr_{Q \sim M_{k(1+\delta) \times k}(p)} \left[\exists v \in
   S^{k-1}, \exists c \in \mathbb R \textrm{ s.t. } \|Q v - c
   \1\|_2 \leq \alpha \sqrt{\left( \tfrac12 + \varepsilon \right) n}\right]
 \leq \exp(-\beta \left( \tfrac12 + \varepsilon \right) n),
\end{equation}
but if we choose the constants such that $2Dr \leq \alpha
\sqrt{\tfrac12 + \varepsilon}$ this implies that \eqref{eq:fbound}
occurs with probability at most $\exp(-\beta \left( \tfrac12 +
  \varepsilon \right) n)$, conditioned on $|\lambda| + \|P\| \leq
D\sqrt{n}$.  Taking a union bound over \eqref{eq:useLPRT} and
\eqref{eq:betaprime}, we see that for $r = \tfrac{\alpha
  \sqrt{\tfrac12 + \varepsilon}}{2D}$,
$$
\Pr[\|f|_S\|_2 < r] \leq \exp(-\beta \left( \tfrac12 + \varepsilon
\right) n) + \exp(-t' n),
$$
where both $\beta, t' > 0$ depend only on $p$.
This completes the proof.
\end{proof}

We record the following simple corollary.

\begin{corollary}\label{cor:l2mass}
 For every $p \in (0, 1)$ there exist $r = r(p) > 0$ and $\varepsilon
 = \varepsilon(p)$ with $0 < \varepsilon < \frac12$ such that for
 almost all $G \sim G(n,p)$ and {\em every} subset $S \subseteq V(G)$
 with $|S| \geq (\frac12+\varepsilon) n$ we have $\|f|_S\|_2 \geq r$.
\end{corollary}

\begin{proof}
  We need to take a union bound over all subsets $S \subseteq [n]$
  with $|S| \geq (\frac12+\varepsilon) n$.  The number of such subsets
  is $2^{(n+O(\log n))H \left( \tfrac12 + \varepsilon \right)}$. Thus,
  we need $\varepsilon$ to be large enough so that:
  \begin{equation}\label{eq:H}
    H \left( \tfrac12 + \varepsilon \right) < \min \left\{ \beta \left(
        \tfrac12 + \varepsilon \right), \frac{\xi_1^2}{32}, \frac{\xi_2^2
        \left( \tfrac12 + \varepsilon \right)}8 \right\}~.
  \end{equation}
\end{proof}
}

\subsection{Bounding the number of exceptional vertices}

We now turn to prove Theorem \ref{main_theorem}.
Fix some $p \in (0, 1)$ and consider $G(V,E) \sim G(n,p)$.
We concentrate on the part of the theorem that
concerns weak nodal domains; exactly the same proof works for
strong domains after we delete the vertices at which an
eigenvector vanishes (which do not contribute to the
count of strong nodal domains).

Let $r = r(p)$ and $\varepsilon = \varepsilon(p)$ be chosen as in
Corollary \ref{cor:l2mass}. In everything that follows, we assume
that $n$ is sufficiently large. By Theorem \ref{thm:furkom},
it almost surely holds that every non-first eigenvalue $\lambda$ of $G$
satisfies $|\lambda| = O(\sqrt{p(1-p)n})$. We thus can and will
assume that this bound holds for the remainder of this section.

\begin{lemma}\label{lem:smallsum}
 Almost surely every non-first eigenvector $f : V \to \mathbb R$ of
 $G$ satisfies $|\langle f, \mathbf{1}\rangle| \leq O(1/\sqrt{p})$.
\end{lemma}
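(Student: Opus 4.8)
The plan is to use the eigenvector equation directly, exploiting the fact that the adjacency matrix $A$ of $G$ differs from $pJ$ by a symmetric random matrix of small operator norm. Set $M = pJ - A$, where $J$ is the all-ones matrix. Since $A_{xx} = 0$ and each off-diagonal entry $A_{xy}$ equals $1$ with probability $p$, the matrix $M$ has all diagonal entries equal to $p$ and off-diagonal entries that are i.i.d.\ copies of $X_p$; that is, $M \sim M_n^{\sym}(p)$. Hence by \thmref{thm:symtail} (applied with any fixed $\xi>0$), almost surely $\|M\| \leq (2\sqrt{p(1-p)}+\xi)\sqrt n = O(\sqrt{p(1-p)\,n})$. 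I would condition on this event for the rest of the argument, together with the event — already assumed throughout this section via \thmref{thm:furkom} — that every non-first eigenvalue $\lambda$ of $G$ satisfies $|\lambda| = O(\sqrt{p(1-p)\,n})$.

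Now let $f$ be an arbitrary non-first eigenvector with eigenvalue $\lambda$, normalized so that $\|f\|_2 = 1$, and write $\bar f = \langle f,\mathbf 1\rangle$. Since $Jf = \bar f\,\mathbf 1$, the identity $A = pJ - M$ turns the eigenvector equation $Af = \lambda f$ into $p\,\bar f\,\mathbf 1 = pJf = \lambda f + Mf$. Taking $\ell_2$ norms and applying the triangle inequality together with $\|f\|_2 = 1$,
\[
p\,|\bar f|\,\sqrt n = \|p\,\bar f\,\mathbf 1\|_2 \leq |\lambda|\,\|f\|_2 + \|M\|\,\|f\|_2 = |\lambda| + \|M\|.
\]
By the two events we conditioned on, the right-hand side is $O(\sqrt{p(1-p)\,n})$, so
\[
|\bar f| \leq \frac{O(\sqrt{p(1-p)\,n})}{p\sqrt n} = O\!\left(\sqrt{\tfrac{1-p}{p}}\right) = O(1/\sqrt p),
\]
which is exactly the claimed bound; moreover it holds simultaneously for every non-first eigenvector, since the two events on which it rests are global.

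I do not expect a substantial obstacle here: the whole argument is a one-line norm estimate off the eigenvector equation. The only point that needs a moment's care is recognizing that $pJ - A$ is precisely a sample of $M_n^{\sym}(p)$ — in particular that the zero diagonal of $A$ yields the constant diagonal $p$ — so that \thmref{thm:symtail} applies verbatim. It is also worth noting that the argument genuinely uses that $f$ is a \emph{non-first} eigenvector: for the first eigenvector the eigenvalue is $\Theta(pn)$ rather than $O(\sqrt n)$, and correspondingly $\langle f_1,\mathbf 1\rangle = \Theta(\sqrt n)$, so no analogous bound can hold in that case.
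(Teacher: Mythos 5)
Your proposal is correct and follows essentially the same route as the paper: both write $M = pJ - A \sim M_n^{\sym}(p)$, invoke \thmref{thm:symtail} for $\|M\| = O(\sqrt{p(1-p)n})$, and combine this with the eigenvalue information to extract the $O(1/\sqrt p)$ bound on $\bar f$. The only cosmetic difference is that you take $\ell_2$ norms of the rearranged identity $p\bar f\,\mathbf 1 = \lambda f + Mf$ (so the bound $|\lambda| = O_p(\sqrt n)$ from \thmref{thm:furkom} enters the numerator), whereas the paper pairs with $\mathbf 1$ and divides by $pn - \lambda$; both are valid and give the same estimate.
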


\begin{proof}
 As usual, $A$ is the adjacency matrix of $G$, $\lambda$ is the
 eigenvalue of $f$ and $J$ is the all-ones matrix. Let $M = pJ -
 A$. Then,
$$
\lambda \langle f, \mathbf{1} \rangle = \langle f, A\mathbf{1} \rangle
= \langle f, pJ\mathbf{1}\rangle - \langle f, M\mathbf{1}\rangle = pn
\langle f, \mathbf{1}\rangle - \langle f,M\mathbf{1}\rangle.
$$
It follows that
$$
|\langle f, \mathbf{1}\rangle| = \frac{|\langle f,
 M\mathbf{1}\rangle|}{pn-\lambda} \leq
\frac{\|M\mathbf{1}\|_2}{pn-\lambda} \leq \frac{\sqrt{n}
 \|M\|}{pn-\lambda}\,.
$$
But since $M \sim M^{\mathrm{sym}}_n(p)$, we know that $\|M\| =
O(\sqrt{p(1-p)n})$ almost surely.  Therefore almost surely we have
$$
|\langle f, \mathbf{1}\rangle| \leq \frac{O(n
 \sqrt{(1-p)})}{pn-O(\sqrt{(1-p)n})} \leq O\left(\frac{1}{\sqrt{p}}\right).
$$
\end{proof}

We now derive an almost sure bound on the number of weak nodal domains.
To this end, for any function $f : V \to \mathbb R$,
we let $\mathcal P_f$ and $\mathcal N_f$ be the largest
non-negative and non-positive domains in $f$ (with respect
to the random graph $G$).  Define $\mathcal E_f = V
\setminus (\mathcal P_f \cup \mathcal N_f)$, and let $\mathcal Z_f =
\{ v \in V : f(v) = 0 \}$.  In particular, observe that $\mathcal E_f
\cap \mathcal Z_f = \emptyset$ since we are discussing weak domains.
Although the next lemma is stated in terms of nodal domains, it should
be clear that it is, in fact, a simple combinatorial observation about
random graphs.

\begin{lemma}
 \label{lem:smallcomp}
For any $f : V \to \mathbb R$,
if $D_1, \ldots, D_m$ are the weak nodal domains in $\mathcal E_f$,
 then almost surely $m = O(p^{-1} \log n)$ and $|D_i| = O(p^{-1} \log
 n)$ for every $i \in [m]$.
\end{lemma}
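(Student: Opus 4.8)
The plan is to argue that any nodal domain $D_i \subseteq \mathcal{E}_f$ must be both small and an independent-like structure, simply because it is disconnected from the two primary domains $\mathcal{P}_f$ and $\mathcal{N}_f$, which together are huge. The key structural fact is: since $D_i$ is a maximal connected set on which $f$ does not change sign, and it is disjoint from (say) $\mathcal{P}_f$, no vertex of $D_i$ has a neighbor in $\mathcal{P}_f$ on which $f$ has the same (weak) sign. Concretely, if $D_i$ is a non-negative domain, then every $v \in D_i$ has no neighbor in $\mathcal{P}_f$ at all with $f \geq 0$ there — but all of $\mathcal{P}_f$ has $f \geq 0$, so $v$ has no neighbor in $\mathcal{P}_f$ whatsoever. (Symmetrically for a non-positive $D_i$ versus $\mathcal{N}_f$.) Thus every vertex of $D_i$ misses an entire set of linear size, namely $\mathcal{P}_f$ (or $\mathcal{N}_f$).

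First I would make precise that $|\mathcal{P}_f \cup \mathcal{N}_f| \geq n - |\mathcal{E}_f|$, and that one of $|\mathcal{P}_f|, |\mathcal{N}_f|$ is at least, say, $(n-|\mathcal{E}_f|)/2$; in fact it is cleaner to note that if $|\mathcal{E}_f|$ were already $\Omega(n)$ we would be in the regime handled by Theorem~\ref{thm:l2mass}/Corollary~\ref{cor:l2mass}, so we may freely assume $|\mathcal{E}_f| = o(n)$, hence both primary domains have size $\geq \Omega(n)$ — actually for this lemma it suffices that at least one of them, call it $W$ (the one matching the sign of $D_i$), has $|W| \geq \gamma n$ for a fixed $\gamma > 0$. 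Then every vertex of $D_i$ has no neighbor in $W$. Now I would invoke a standard $G(n,p)$ property: almost surely, for every pair of disjoint sets $U, W$ with $|W| \geq \gamma n$ and $|U| \geq C p^{-1}\log n$ for a suitable constant $C$, there is at least one edge between $U$ and $W$ — indeed $\Pr[\text{no edge}] = (1-p)^{|U||W|} \le e^{-p |U||W|}$, and a union bound over the at most $2^n \cdot \binom{n}{|U|}$ choices of $(W,U)$ is dominated provided $p|U|\gamma n \gg n\log n + |U|\log n$, which holds once $|U| \geq C p^{-1}\log n$ with $C$ large. This immediately forces $|D_i| = O(p^{-1}\log n)$, since a larger $D_i$ would contain a subset $U$ of size exactly $\lceil C p^{-1}\log n\rceil$ lying entirely in $\mathcal{E}_f$ and thus missing $W$ — contradiction.

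For the bound $m = O(p^{-1}\log n)$ on the number of such domains, I would use that the $D_i$ are pairwise non-adjacent connected sets all contained in $\mathcal{E}_f$, together with each $D_i$ again missing the linear-size set $W_i \in \{\mathcal{P}_f,\mathcal{N}_f\}$. One route: pick one representative vertex $v_i \in D_i$ from each domain; the $v_i$ form a set $R$ of size $m$, and each $v_i$ has no neighbor in its corresponding primary domain, so $R$ is a set of $m$ vertices each of which misses one of two linear-size sets; by pigeonhole at least $m/2$ of them miss the \emph{same} primary set $W$, giving a set $U \subseteq \mathcal{E}_f$ of size $\geq m/2$ with no edges to $W$, and the same $G(n,p)$ property as above forces $m/2 = O(p^{-1}\log n)$. (Alternatively, and perhaps more robustly, one can bound $|\mathcal{E}_f|$ itself: $\mathcal{E}_f$ is a union of connected pieces each disconnected from a linear-size set, and a short argument shows $|\mathcal{E}_f| = O(p^{-1}\log n)$ directly from the edge-existence property, whence both $m \le |\mathcal{E}_f|$ and $|D_i| \le |\mathcal{E}_f|$ are $O(p^{-1}\log n)$; but one must be careful, since a priori the two halves of $\mathcal{E}_f$ that miss $\mathcal{P}_f$ versus $\mathcal{N}_f$ could interact, so the cleanest statement is the per-domain one above.)

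The main obstacle is purely bookkeeping: correctly identifying, for each component $D_i \subseteq \mathcal{E}_f$, the linear-size primary domain it must avoid, and handling the sign cases (a non-negative $D_i$ avoids all of $\mathcal{P}_f$; a non-positive $D_i$ avoids all of $\mathcal{N}_f$; a $D_i$ on which $f$ is identically zero — impossible for weak domains since such a $D_i$ would merge into a neighboring domain unless it has no neighbors at all, but an isolated vertex has no neighbors in a linear set either, so it is covered). Once the combinatorial setup is pinned down, the probabilistic input is the elementary union-bound estimate $(1-p)^{|U||W|} \le e^{-p|U||W|}$ against at most $2^{2n}$ choices, which is standard and requires no new machinery.
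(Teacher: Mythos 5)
Your structural observation (a domain $D_i \subseteq \mathcal E_f$ of a given sign has no edges to the primary domain of the same sign) and your probabilistic tool (non-adjacent disjoint sets in $G(n,p)$ cannot both be large) are the right ingredients, but your argument hinges on the unjustified assumption that the primary domain $W$ matching the sign of $D_i$ has size $\Omega(n)$, and the justification you give does not work. First, the lemma is stated for an \emph{arbitrary} $f : V \to \mathbb R$ -- it is meant as a purely combinatorial fact about the random graph -- so Theorem~\ref{thm:l2mass}/Corollary~\ref{cor:l2mass}, which concern eigenvectors, cannot be invoked to restrict $f$. Second, even for eigenvectors those results say nothing about $|\mathcal E_f|$; the bound $|\mathcal E_f| = O(\log^2 n)$ used in Lemma~\ref{lem:fewsmall} is deduced \emph{from} the present lemma, so your reduction is circular within the paper's development. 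Third, even granting $|\mathcal E_f| = o(n)$, it only follows that the \emph{larger} of $\mathcal P_f, \mathcal N_f$ has linear size; the one matching the sign of $D_i$ could still be tiny (the paper explicitly treats linear lower bounds on both $|\mathcal P_f|$ and $|\mathcal N_f|$ as an unproven hypothesis in Section~\ref{sec:exactly}), and in that case both your size bound and your pigeonhole count collapse, since the edge-existence fact is simply false when $W$ is small.

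The gap is easy to close, and the paper's proof shows how, using no size assumption on the primary domains at all. For the size of each $D_i$: $D_i$ and $\mathcal P_f$ (say both non-negative) are disjoint maximal same-sign domains, hence non-adjacent, and by maximality of $\mathcal P_f$ we have $|\mathcal P_f| \geq |D_i|$; since almost surely $G(n,p)$ contains no two disjoint sets, each of size $\geq C p^{-1}\log n$, with no edge between them (union bound $\binom{n}{s}^2 (1-p)^{s^2} = o(1)$ for $s \geq Cp^{-1}\log n$), we get $|D_i| = O(p^{-1}\log n)$. For the number of domains: choosing one vertex from each non-negative weak domain yields an independent set (an edge between representatives of two distinct maximal non-negative domains would merge them), so their number is at most the independence number of $G(n,p)$, which is almost surely $O(p^{-1}\log n)$; the same applies to the non-positive domains. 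Your per-vertex ``misses all of $W$'' observation is correct, but the maximality inequality $|D_i| \leq |\mathcal P_f|$, rather than any linear-size claim, is what makes the probabilistic step legitimate for every $f$ simultaneously.
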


\begin{proof}
 If $P_1, P_2, \ldots, P_s$ are non-negative nodal domains, then
 selecting one element from every $P_i$ yields an independent set of
 size $s$.  By a standard fact (which follows immediately from
  a union bound) about graphs in $G(n,p)$, almost
 surely $s = O(p^{-1} \log n)$.  Furthermore, since each $|P_i| \le
 |\mathcal P_f|$ and there are no edges between the two sets, almost
 surely $|P_i| \le O(p^{-1} \log n)$.  The same holds for the
 non-positive nodal domains in $\mathcal E_f$.
\end{proof}

We are now ready to complete the proof of Theorem~\ref{main_theorem}.
We will use the following straightforward fact about $G(n,p)$.

\begin{fact}
For any fixed $k \in \mathbb N$, the following holds almost surely
over the choice $G \sim G(n,p)$.  For any $x_1, x_2, \ldots, x_k \in V$,
$|\Gamma(x_1) \cup \cdots \cup \Gamma(x_k)| = (1-(1-p)^k \pm o(1)) n$
and $|\Gamma(x_1) \cap \cdots \cap \Gamma(x_k)| = (p^k \pm o(1))n.$
\end{fact}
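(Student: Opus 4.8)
The plan is to fix a $k$-tuple, compute the probability that a single vertex lies in the union (resp.\ intersection) of the $k$ neighborhoods, use independence across the remaining vertices to get \emph{exponential} concentration, and then pay for a union bound over all $\binom{n}{k}\leq n^k$ tuples. Fix distinct $x_1,\dots,x_k\in V$ (in the applications the $x_i$ are distinct, and this is the only regime in which the stated identities can possibly hold), and set $Y=V\setminus\{x_1,\dots,x_k\}$. For each $y\in Y$ the $k$ potential edges $\{y,x_1\},\dots,\{y,x_k\}$ are distinct, hence present independently, so $\Pr[y\in\Gamma(x_1)\cup\cdots\cup\Gamma(x_k)]=1-(1-p)^k$ and $\Pr[y\in\Gamma(x_1)\cap\cdots\cap\Gamma(x_k)]=p^k$. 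Moreover, as $y$ ranges over $Y$ the relevant sets of potential edges are pairwise disjoint, so the corresponding indicator variables are mutually independent.

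Consequently $|(\Gamma(x_1)\cup\cdots\cup\Gamma(x_k))\cap Y|$ is a sum of $|Y|=n-k$ i.i.d.\ $\mathrm{Bernoulli}(1-(1-p)^k)$ variables, and $|(\Gamma(x_1)\cap\cdots\cap\Gamma(x_k))\cap Y|$ is a sum of $n-k$ i.i.d.\ $\mathrm{Bernoulli}(p^k)$ variables. By the Chernoff--Hoeffding bound, either of these deviates from its mean by more than $\xi n$ with probability at most $2\exp(-2\xi^2 n^2/(n-k))\leq 2\exp(-\xi^2 n)$ for $n$ large. Since the counts over all of $V$ differ from these by at most $k=O(1)$, which is absorbed into the error term, a union bound over the at most $n^k$ choices of $(x_1,\dots,x_k)$ gives that the probability some tuple violates the $\xi n$ bound is at most $2n^k\exp(-\xi^2 n)=\exp\!\bigl(k\ln n-\xi^2 n+O(1)\bigr)$.

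For any fixed $\xi>0$ this tends to $0$, already yielding the statement with $o(1)$ replaced by an arbitrarily small constant; to get the $\pm o(1)$ form, apply the bound with $\xi=\xi_n=n^{-1/4}$, so that the failure probability is $\exp(k\ln n-n^{1/2}+O(1))\to 0$. Hence almost surely every $k$-tuple satisfies $|\Gamma(x_1)\cup\cdots\cup\Gamma(x_k)|=(1-(1-p)^k)n\pm n^{3/4}$ and $|\Gamma(x_1)\cap\cdots\cap\Gamma(x_k)|=p^k n\pm n^{3/4}$, and $n^{3/4}=o(n)$ gives the claim. There is no genuine obstacle here; the one point worth noting is that the union bound over all $k$-tuples forces us to use an exponential (Chernoff-type) tail bound rather than a second-moment estimate, so that the per-tuple failure probability beats the $n^k$ counting factor.
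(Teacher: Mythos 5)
Your proof is correct: the per-tuple computation (independence of the $k$ potential edges at each $y$, then independence across $y$), the Hoeffding bound, the union bound over at most $n^k$ tuples, and the choice $\xi_n=n^{-1/4}$ to upgrade the fixed-$\xi$ statement to a $\pm o(1)$ statement are all sound, and your remark that the claim should be read for distinct $x_1,\dots,x_k$ (with the $x_i$ themselves contributing only $O(1)$) is the right way to interpret the statement. The paper states this as a ``straightforward fact'' without proof, and your argument is exactly the standard one it implicitly relies on, so there is nothing to compare beyond noting that you have supplied the omitted details correctly.
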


\begin{lemma}\label{lem:fewsmall}
 It almost surely holds that every non-first eigenvector satisfies $|\mathcal
 E_f| = O_p(1)$.
\end{lemma}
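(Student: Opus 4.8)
The plan is to argue by contradiction: suppose some non-first eigenvector $f$ has $|\mathcal E_f| = \omega(1)$, and derive a violation of Corollary \ref{cor:l2mass}. The idea is that $\mathcal E_f$ consists of many tiny nodal domains $D_1,\dots,D_m$ (by Lemma \ref{lem:smallcomp}, each of size $O(p^{-1}\log n)$ and $m = O(p^{-1}\log n)$), and each such domain $D_i$ is, by maximality, \emph{separated} from the bulk: every neighbor of a vertex in $D_i$ that lies outside $D_i$ must have the \emph{opposite} (or zero) sign from the vertices of $D_i$. I want to leverage this sign constraint against the eigenvector equation to show that $f$ is forced to be small on a large set $S$ with $|S| \geq (\tfrac12+\varepsilon)n$, contradicting the corollary. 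The cleanest route: pick a single vertex $x \in D_i$ for the largest domain $D_i$; by the Fact, $\Gamma(x)$ has $\approx pn$ vertices, and since $D_i$ is tiny, \emph{almost all} of $\Gamma(x)$ lies in $\mathcal P_f \cup \mathcal N_f$; moreover all of $\Gamma(x) \setminus D_i$ has sign opposite to $f(x)$ (WLOG $f(x) > 0$, so $f(y) \leq 0$ for $y \in \Gamma(x)\setminus D_i$). Then the eigenvector equation $\lambda f(x) = f(\Gamma(x)) = \sum_{y \in \Gamma(x)} f(y)$ says that the sum of the $\approx pn$ non-positive values $\{f(y) : y \in \Gamma(x), y \notin D_i\}$ plus a tiny correction equals $\lambda f(x)$, which is $O(\sqrt{n}\,\|f\|_\infty)$ — but I need to turn ``the sum is small'' into ``the $\ell_2$ norm on a large set is small,'' which is not immediate since signed cancellation is the issue here, not magnitudes.

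So instead I would use \emph{several} carefully chosen vertices. Consider the case $|\mathcal P_f| \geq |\mathcal N_f|$ (the symmetric case is identical after negating $f$). Since $|\mathcal N_f \cup \mathcal E_f| \leq (\tfrac12 + o(1))n$ is too weak on its own, the trick is: if $\mathcal E_f$ is large, use the domains in $\mathcal E_f$ to carve out a set on which $f$ is small. Take two vertices $x_1 \in \mathcal P_f$ and, from the (non-positive) portion, note that every vertex $v$ with $f(v) < 0$ has $f$ non-negative on... — actually the right move is: let $S$ be the set of vertices $v$ such that $f(v) f(x) \le 0$ for a fixed $x$ in one of the exceptional domains, intersected with a co-neighborhood to make it large. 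Concretely, pick $x$ in the largest domain of $\mathcal E_f$ with (WLOG) $f(x)>0$; set $S = V \setminus (\Gamma(x) \cup D_x)$ where $D_x$ is $x$'s domain. By the Fact, $|S| = ((1-p) - o(1))n$, which is $\geq (\tfrac12+\varepsilon)n$ only when $p < \tfrac12 - \varepsilon$ — not good enough in general. The fix is to take $x_1,\dots,x_k$ spanning \emph{both} signs among the exceptional vertices, or to combine the eigenvector identity at $x$ (controlling $f$ on $\Gamma(x)$ in a signed sense) with a second identity; more robustly, one derives that $\|f|_S\|_2$ is small for $S = \{y : f(y) \le 0\} \cap \Gamma(x)$ using that the sum over this set is $\lambda f(x) - f(D_x \cap \Gamma(x)) = O(\sqrt n \|f\|_\infty)$ while all terms have the same sign, so $\sum_{y \in S} |f(y)| = O(\sqrt n \|f\|_\infty)$, hence $\|f|_S\|_1$ is small — and then $\|f\|_\infty \le 1$ gives $\|f|_S\|_2^2 \le \|f|_S\|_\infty \|f|_S\|_1$, forcing $\|f|_S\|_2$ small provided $\|f\|_\infty = o(\sqrt n)$, which holds trivially since $\|f\|_2 = 1$. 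This set $S \subseteq \Gamma(x)$ has size $\approx pn$, so again we need $S$ \emph{outside} a large piece: combine with $x'$ a second exceptional vertex and take unions/intersections of neighborhoods so that the ``small $\ell_1$'' set has size $\geq (\tfrac12+\varepsilon)n$.

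The structural point that makes it work: because there are many exceptional domains, we may select $k = O_p(1)$ exceptional vertices $x_1, \ldots, x_k$ all of the \emph{same} sign, say positive, and set $T = \Gamma(x_1) \cup \cdots \cup \Gamma(x_k)$, which by the Fact has size $(1-(1-p)^k - o(1))n \ge (\tfrac12+\varepsilon)n$ once $k$ is a large enough constant; every vertex in $T$ outside $\bigcup_j D_{x_j}$ has $f$-value $\le 0$, and applying the eigenvector identity at each $x_j$ and summing, $\sum_{j}\sum_{y \in \Gamma(x_j)} f(y) = \sum_j \lambda f(x_j) = O(\sqrt n)$ — but overlaps in the $\Gamma(x_j)$ are all counted with the same sign so no cancellation across the union, giving $\sum_{y \in T} |f(y)| \le O(\sqrt n) + (\text{tiny correction from the }D_{x_j}) = O(\sqrt n \cdot \mathrm{poly}\log n / \sqrt n)$... wait, the correction is $\sum_j |f(D_{x_j})| \le \sum_j |D_{x_j}| \|f\|_\infty = O(p^{-1}\log n)$, which is $o(\sqrt n)$, so indeed $\|f|_T\|_1 = O(\sqrt n)$, hence $\|f|_T\|_2^2 \le \|f\|_\infty \|f|_T\|_1 = O(\sqrt n)$ — this is still $\omega(1)$! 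The resolution is that $\|f\|_\infty$ must actually be small: but we cannot assume that (Section \ref{sec:exactly} flags this as open). The correct conclusion must therefore come not from $\ell_1$ but directly: the real claim is that if $|\mathcal E_f|$ is \emph{super-constant}, THEN we can find such $k = k(|\mathcal E_f|) \to \infty$ exceptional vertices of one sign, but we only get to use $k = O_p(1)$. So the contradiction is obtained the other way: I would show that $|\mathcal E_f| \le$ (constant) directly by bounding how many disjoint tiny separated domains can coexist with the $\ell_2$-mass lower bound, i.e., if $|\mathcal E_f|$ exceeded a suitable constant $c = c(p)$, then taking $k$ exceptional vertices of the majority sign with $T = \bigcup \Gamma(x_j)$ of measure $\ge (\tfrac12+\varepsilon)n$, the eigenvector relations (now used as in Theorem \ref{thm:l2mass}, squaring and summing rather than just summing, so that the bound on $\|f|_T\|_2$ comes from the rectangular-matrix machinery of Theorem \ref{thm:l2mass}/Corollary \ref{cor:LPRTV} applied to $T$ and its complement) contradict Corollary \ref{cor:l2mass}. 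The main obstacle is exactly this: extracting a genuinely \emph{large} set of coordinates (size $\ge (\tfrac12+\varepsilon)n$) on which $f$ has provably small $\ell_2$ norm, using only that the exceptional domains are small and sign-separated — this is where one must invoke Theorem \ref{thm:l2mass} rather than a naive summation, and it is why the bound is $O_p(1)$ exceptional vertices rather than zero.
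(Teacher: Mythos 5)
There is a genuine gap, and you in fact name it yourself without closing it: you never produce a large set $S$, $|S|\ge(\tfrac12+\varepsilon)n$, on which $\|f|_S\|_2$ is provably small, which is the only way to contradict Corollary~\ref{cor:l2mass}. Your route through the eigenvector identities at the exceptional vertices $x_j$ gives only $\|f|_T\|_1 = O(k\sqrt{n}\,\|f\|_\infty)$ on $T=\bigcup_j\Gamma(x_j)$, and (as you observe) converting this to an $\ell_2$ bound via $\|f|_T\|_2^2\le\|f\|_\infty\|f|_T\|_1$ requires control of $\|f\|_\infty$, which is precisely what is unavailable (Section~\ref{sec:exactly}). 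The fallback you sketch --- ``squaring and summing as in Theorem~\ref{thm:l2mass}, applying the rectangular-matrix machinery to $T$ and its complement'' --- is not an argument: that machinery is what proves the \emph{lower} bound $\|f|_S\|_2\ge r$ on large sets, so invoking it again cannot supply the needed \emph{upper} bound, and no contradiction is reached.

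The paper closes exactly this hole with two ingredients you do not use. First, Lemma~\ref{lem:smallsum}: every non-first eigenvector is nearly orthogonal to $\mathbf 1$, $|\langle f,\mathbf 1\rangle|=O(1/\sqrt p)$. Second, the same-sign structure is used not to bound $f$ on $T=\bigcup_j\Gamma(x_j)$ but to conclude $T\subseteq\mathcal P_f\cup\mathcal E_f$, hence $|\mathcal N_f\setminus\mathcal Z_f|\le((1-p)^k+o(1))n$ is \emph{small}; Cauchy--Schwarz on this small set (using only $\|f\|_2=1$, no $\ell_\infty$ information) gives $\sum_{x\in\mathcal N_f}|f(x)|\le\sqrt{((1-p)^k+o(1))n}$, and by near-orthogonality the total positive sum $\sum_{x\in\mathcal P_f\cup\mathcal E_f}f(x)$ obeys the same bound --- crucially carrying the factor $\sqrt{(1-p)^k}$, which decays in $k$, unlike your $O(\sqrt n\,\|f\|_\infty)$ bound. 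Then Markov's inequality (not an $\ell_\infty$--$\ell_1$ interpolation) extracts a subset $S\subseteq\mathcal P_f\cup\mathcal E_f$ of size at least $(\tfrac12+\varepsilon)n$ on which $f$ is pointwise $O\bigl(\sqrt{(1-p)^k/n}\bigr)$, so $\|f|_S\|_2=O\bigl(\sqrt{(1-p)^k}/(\tfrac12-\varepsilon-(1-p)^k)\bigr)$; choosing $k=O_p(1)$ large enough makes this smaller than $r(p)$ from Corollary~\ref{cor:l2mass}, the desired contradiction. (The paper also first shows $f$ has constant sign on $\mathcal E_f$ via common neighborhoods and Lemma~\ref{lem:smallcomp}; your ``majority sign'' selection would serve the same purpose, so that step is not the issue.) Without the near-orthogonality to $\mathbf 1$ and the Markov truncation, your outline cannot be completed as stated.
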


\begin{proof}
 Since we seek a result that holds asymptotically almost surely, we
 take the following liberty: Any property that holds for almost every
 graph in $G(n,p)$ and is needed in the proof is assumed to hold for
 $G$.

 We can assume that $f$ has a constant sign on $\mathcal E_f$, for
 if $x,y \in \mathcal E_f$ satisfy $f(x) > 0 > f(y)$, then also
 $\Gamma(x) \cap \Gamma(y) \subseteq \mathcal E_f$.  But from Lemma
 \ref{lem:smallcomp}, $|\mathcal E_f| = O(\log^2 n)$, whereas
 $|\Gamma(x) \cap \Gamma(y)| = (p^2 + o(1))n$ by properties of random
 graphs.

 Let $k$ be an integer such that $\tfrac{\tfrac12 +
   \varepsilon}{1-(1-p)^k} < 1$. Let $\{x_1, x_2, \ldots, x_k\}
 \subseteq \mathcal E_f$ and assume without loss of generality that
 $f(x_i) < 0$ for $i = 1, \ldots, k$.  This implies that $|\mathcal
 P_f \cup \mathcal E_f| \geq (1-(1-p)^k-o(1))n$ (because this set
 contains the union of the neighborhoods of $x_1, x_2, \ldots, x_k$).
 Therefore $|\mathcal N_f \setminus \mathcal Z_f| \leq ((1-p)^k +
 o(1))n$.  By Lemma \ref{lem:smallsum}, we have
$$
\sum_{x \in \mathcal P_f \cup \mathcal E_f} f(x) \leq O(1/\sqrt{p}) + \sum_{x
 \in \mathcal N_f \setminus \mathcal Z_f} |f(x)| \leq O(1/\sqrt{p}) +
\sqrt{|\mathcal N_f \setminus \mathcal Z_f|} \sqrt{\sum_{x \in
   \mathcal N_f} f(x)^2} \leq \sqrt{((1-p)^k + o(1))n}.
$$

By Markov's inequality, we know that there exists a subset
$S \subseteq \mathcal P_f \cup \mathcal E_f$ such that\\
$|S| \geq \tfrac{\tfrac12 +
 \varepsilon}{1-(1-p)^k} |\mathcal P_f \cup \mathcal E_f|$, and for
every $y \in S$,
\begin{eqnarray*}
 f(y) &\leq& \frac{\sqrt{((1-p)^k+o(1))n}}{\left( 1 - \tfrac{\frac12+\varepsilon}{1-(1-p)^k} \right) |\mathcal P_f \cup \mathcal E_f|} \\
 &\leq & \frac{\left( 1 - (1-p)^k \right) \sqrt{((1-p)^k+o(1))n}}{(\frac12-\varepsilon-(1-p)^k) | \mathcal P_f \cup \mathcal E_f |} \\
 &\leq & \frac{\sqrt{((1-p)^k+o(1))}}{(\frac12-\varepsilon-(1-p)^k)\sqrt{n}}.
\end{eqnarray*}

\remove{
\begin{eqnarray*}
 f(y) &\leq& \frac{\sqrt{((1-p)^k+o(1))n}}{(\frac12-\varepsilon) |\mathcal P_f \cup \mathcal E_f|} \\
 &\leq & \frac{\sqrt{((1-p)^k+o(1))}}{(\frac12-\varepsilon)(1-(1-p)^k-o(1))\sqrt{n}}.
\end{eqnarray*}
}

Consequently,
\[
\| f |_S \|_2 \leq \sqrt{\mathcal P_f \cup \mathcal E_f|}
\frac{\sqrt{((1-p)^k+o(1))}}{(\frac12-\varepsilon - (1-p)^k)\sqrt{n}}
\leq \frac{\sqrt{((1-p)^k+o(1))}}{\frac12-\varepsilon - (1-p)^k}.
\]
\remove{
$$
\|f|_S\|_2 \leq \sqrt{|\mathcal P_f \cup \mathcal E_f|}
\frac{\sqrt{((1-p)^k+o(1))}}{(\frac12-\varepsilon)(1-(1-p)^k-o(1))\sqrt{n}}
\leq
\frac{\sqrt{((1-p)^k+o(1))}}{(\frac12-\varepsilon)(1-(1-p)^k-o(1))}.
$$
}
Corollary \ref{cor:l2mass} yields that $\|f|_S\|_2 \geq r = r(p)$.
It follows that
\begin{equation}\label{eq:krdependence}
 \frac{\sqrt{((1-p)^k+o(1))}}{\frac12-\varepsilon -(1-p)^k} \geq r.
\end{equation}
We conclude that $k = O\left(\frac{1}{p}
 \log\left(\frac{1}{r(\frac12-\varepsilon)}\right)\right)$, which finishes the
proof since $r, \varepsilon$ can be chosen depending only on $p$.
\end{proof}

\subsection{Future directions}
\label{sec:exactly}

First, we suspect that the following question should not be too
difficult to resolve.

\begin{conjecture}
For every fixed $p \in (0,1)$, for $G \sim G(n,p)$, almost surely every eigenfunction $f$ of $G$
  satisfies $$\{v \in V(G) : f(v) = 0\} = \emptyset.$$
\end{conjecture}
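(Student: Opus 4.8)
The statement to prove is the final conjecture: for fixed $p \in (0,1)$, almost surely no eigenfunction of $G \sim G(n,p)$ has a zero coordinate. The natural approach is a union bound over vertices $v \in V$ and over ``types'' of eigenvectors, showing that the event ``some eigenvector $f$ has $f(v) = 0$'' is very unlikely. The eigenvalue equation at $v$ reads $\lambda f(v) = f(\Gamma(v))$, so $f(v) = 0$ is equivalent to $f(\Gamma(v)) = 0$, i.e. the restriction of $f$ to the coordinates outside $v$ satisfies a single linear constraint $\sum_{y \sim v} f(y) = 0$ determined by the (random) neighborhood of $v$. The plan is to fix $v$, condition on everything except the row/column of the adjacency matrix corresponding to $v$ (which is an independent Bernoulli vector), and argue that it is exponentially unlikely that this fresh random vector is orthogonal to \emph{any} eigenvector of the principal submatrix $A_v$ obtained by deleting $v$.

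Concretely, let $A_v$ be the adjacency matrix of $G - v$, with eigenvectors $g_1, \dots, g_{n-1}$, and let $a \in \{0,1\}^{n-1}$ be the (independent) indicator vector of $\Gamma(v)$. By eigenvalue interlacing together with Theorem~\ref{thm:furkom} (applied to $G-v$), almost surely every non-first $g_i$ has small $\ell_\infty$-to-$\ell_2$ behavior only in the weak sense we already control; what we really need is an anticoncentration (small-ball) statement: for a \emph{fixed} unit vector $g$, $\Pr[\langle a, g\rangle = 0]$ is small. This is exactly the kind of estimate underlying Lemma~\ref{lem:paleyzygmund} / the Berry--Ess\'een ingredient behind Theorem~\ref{thm:LPRTV}: after centering, $\langle a - p\mathbf 1, g\rangle = \sum_i (\mathbf 1_{y_i \sim v} - p) g_i$ is a sum of independent mean-zero terms with variance $p(1-p)$, so for any target value $s$ (here $s = -p\langle \mathbf 1, g\rangle$) the probability it lands exactly at $s$ is at most $O(1/\|g\|_\infty^{-1}) $ roughly, and more robustly, the probability it lands within an interval of width $\rho$ is $O(\rho / \sqrt{p(1-p)}) + $ (lower-order). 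The subtlety is that a \emph{single} fixed-vector bound of polynomial strength does not survive a union bound over the continuum of possible eigenvectors of $A_v$; we need a net argument. So the plan is: (1) restrict to the high-probability event that $\|A_v\| = O(\sqrt n)$ and $|\lambda| = O(\sqrt n)$; (2) note that if $f(v) = 0$ then $f|_{V \setminus v}$ is an eigenvector of $A_v$ with the same eigenvalue $\lambda$ satisfying $\langle a|_{V\setminus v}, f|_{V\setminus v}\rangle = 0$; (3) cover the sphere $S^{n-2}$ by a $\gamma$-net $N$ of size $(3/\gamma)^{n-1}$, approximate $f|_{V\setminus v}$ by $x \in N$, and use $\|A_v(f|_{V\setminus v} - x)\| \le \gamma \|A_v\| = O(\gamma \sqrt n)$ to conclude that $A_v$ nearly fixes some net point; then the relevant event becomes $\exists x \in N$ with $\|A_v x - \lambda x\|$ small \emph{and} $|\langle a, x\rangle|$ small. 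For each fixed $x$ the first event has probability $\exp(-\Omega(n))$ by an argument parallel to the proof of Theorem~\ref{thm:l2mass} (the row $a$ is one fresh Bernoulli row, and one shows a random rectangular Bernoulli matrix cannot have a near-kernel vector), chosen so that $\gamma$ is small enough that $(3/\gamma)^{n-1} \cdot \exp(-\Omega(n)) \to 0$; then a final union over the $n$ choices of $v$ finishes.

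I expect the main obstacle to be exactly the tension in step (3): proving a \emph{uniform} (over all eigenvectors of $A_v$) anticoncentration bound strong enough to beat the $(3/\gamma)^{n-1}$ net cardinality. The clean way around it is to avoid isolating the single linear constraint $\langle a, f\rangle = 0$ and instead fold it into the full eigenvector equation: $f(v) = 0$ means the $(n-1)$-dimensional vector $f|_{V\setminus v}$ lies in the kernel of the rectangular system ``$A_v - \lambda I$ on the last $n-1$ rows together with the row $a$ acting as the $v$-th equation'', i.e. essentially that the original matrix $A - \lambda I$ has a null vector supported off $v$; this is a near-invertibility statement for a random symmetric matrix restricted to a coordinate subspace, and \emph{that} is precisely the regime where Theorem~\ref{thm:LPRTV} / Corollary~\ref{cor:LPRTV} give exponential failure probability after passing to a rectangular Bernoulli submatrix exactly as in the proof of Theorem~\ref{thm:l2mass}. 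So the real plan is to mimic Theorem~\ref{thm:l2mass} with $S = V \setminus \{v\}$ but with the additional hard constraint $f(v) = 0$ pinning down the remaining degree of freedom, and then union-bound over $v$. The one genuinely new input needed beyond the machinery already in the paper is handling the fact that here $|S| = n-1$ rather than $|S| \ge (\tfrac12 + \varepsilon)n$, so the rectangular matrix $Q$ one extracts is nearly square ($\delta \to 0$); this is why the conjecture is only ``not too difficult'' rather than immediate — one would need the sharp near-squareness bounds of \cite{RV07, TaoVu} (lower bounds on the least singular value of a \emph{square} random matrix) rather than the comfortable rectangular bounds of \cite{LPRT05, LPRTV05} used elsewhere, which is presumably why the authors leave it as a conjecture.
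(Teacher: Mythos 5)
The statement you are addressing is not proved in the paper at all: it is posed as an open conjecture (the authors only remark that they ``suspect'' it should not be too difficult), so there is no proof of record to compare against, and your text must therefore be judged as a standalone argument. As such it is a plan, not a proof, and the gap it leaves open is exactly the one you name yourself at the end. The naive net argument in your step (3) cannot be repaired by tuning $\gamma$: after discretizing, the event for a fixed net point is a small-ball event at radius comparable to $\gamma\sqrt n$, and for \emph{structured} unit vectors (e.g.\ a vector supported on two coordinates with values $\pm 1/\sqrt2$) the per-row small-ball probability is a constant independent of $\gamma$ and $n$, so the product over rows cannot beat the $(3/\gamma)^{n-1}$ cardinality of the net. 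This is precisely why singularity/least-singular-value results for discrete random matrices require compressible--incompressible decompositions and Littlewood--Offord machinery rather than plain nets. Your fallback, folding the constraint into the full system $A-\lambda I$, runs into the obstruction the paper states explicitly in its overview: the known quantitative invertibility bounds for \emph{symmetric} random matrices are too weak, and the entire purpose of Theorem~\ref{thm:l2mass} is to escape this by extracting a genuinely rectangular block $Q \sim M_{|S|\times(n-|S|)}(p)$, which requires $|S|\ge(\tfrac12+\varepsilon)n$; with the exceptional set being a single vertex, no such block with aspect ratio bounded away from $1$ exists, so Theorem~\ref{thm:LPRTV} and Corollary~\ref{cor:LPRTV} simply do not apply. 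Moreover $\lambda$ here is an eigenvalue of the same random matrix, not a free parameter, adding a dependence your sketch does not address.

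There is in fact a cleaner reduction than your step (2)--(3) that you may want to note: if $f(v)=0$ then $f|_{V\setminus\{v\}}$ is an exact eigenvector of $A_v$ (the adjacency matrix of $G-v$) and is orthogonal to the row $a$ indexed by $v$; conditioning on $A_v$, the vector $a$ is \emph{independent} of $A_v$, so the union is over only the $n-1$ actual eigenvectors $g_1,\dots,g_{n-1}$ of $A_v$, not over a continuum, and no net is needed. But this only relocates the difficulty: one then needs $\Pr_a[\langle a,g_i\rangle=0]=o(n^{-2})$ for each (random, possibly highly structured) eigenvector $g_i$ of $G-v$, i.e.\ an anticoncentration statement that presupposes delocalization or non-degeneracy of eigenvectors of random symmetric matrices --- again exactly the kind of input that neither the paper's toolkit nor your proposal supplies (a hypothetical eigenvector proportional to $e_1-e_2$ would make the probability a constant). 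So the conjecture remains open as far as your argument goes; your diagnosis of \emph{why} it is hard is sound, but no step of the proposal closes the gap.
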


For simplicity in what follows, we only discuss weak nodal domains.
As before, $\mathcal P_f$ and $\mathcal N_f$ are the largest
non-negative and non-positive weak domains, and $\mathcal E_f = V
\setminus (\mathcal P_f \cup \mathcal N_f)$ is the set of exceptional
vertices.  We observe that if sufficiently good lower bounds on
$|\mathcal N_f|$ and $|\mathcal P_f|$ hold, then the number of
exceptional vertices is at most one. We illustrate this for $p =
\frac12$, but the extension to general $p$ is straightforward.

\begin{lemma}
 Suppose that there exists an $\epsilon_0 > 0$ such that almost surely,
 every non-first eigenvector $f$ of $G \sim G(n,\frac12)$, satisfies
 $|\mathcal P_f|, |\mathcal N_f| \geq (\frac14+\epsilon_0) n$. Then
 almost surely every eigenvector has at most one exceptional vertex.
\end{lemma}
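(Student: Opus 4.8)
The plan is to argue by contradiction: suppose that some non-first eigenvector $f$ has two or more exceptional vertices, i.e. $|\mathcal E_f| \geq 2$. By Lemma~\ref{lem:smallcomp} we know $|\mathcal E_f| = O(\log^2 n)$, so in particular $\mathcal E_f$ is tiny compared to $n$. As in the proof of Lemma~\ref{lem:fewsmall}, I would first dispose of the possibility that $f$ takes both signs on $\mathcal E_f$: if $x, y \in \mathcal E_f$ with $f(x) > 0 > f(y)$, then $\Gamma(x) \cap \Gamma(y)$ has size $(p^2 + o(1))n = (\frac14 + o(1))n$ by the Fact about $G(n,p)$, and this whole set would have to lie in $\mathcal E_f$, contradicting $|\mathcal E_f| = O(\log^2 n)$. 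So we may assume $f$ has constant sign on $\mathcal E_f$, say $f(x) < 0$ for all $x \in \mathcal E_f$.

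Next, pick two exceptional vertices $x_1, x_2 \in \mathcal E_f$ (here is where we use $|\mathcal E_f| \geq 2$). Since $f(x_1), f(x_2) < 0$, the eigenvector equation forces their neighborhoods to be "balanced" in a way that collides with the hypothesized lower bounds. Concretely, $\Gamma(x_1) \cup \Gamma(x_2) \subseteq \mathcal P_f \cup \mathcal E_f$ (a neighbor $w$ of $x_1$ with $f(w) > 0$ cannot be in $\mathcal N_f$, and one with $f(w) \le 0$ would either be in $\mathcal N_f$ — impossible since $x_1 \notin \mathcal N_f$ and $w \sim x_1$ — or in $\mathcal E_f$). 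By the Fact, $|\Gamma(x_1) \cup \Gamma(x_2)| = (1 - (1-p)^2 + o(1))n = (\frac34 + o(1))n$ for $p = \frac12$, so $|\mathcal P_f \cup \mathcal E_f| \geq (\frac34 - o(1))n$, hence $|\mathcal N_f \setminus \mathcal Z_f| \leq (\frac14 + o(1))n$. But the hypothesis says $|\mathcal N_f| \geq (\frac14 + \epsilon_0)n$; combined with $\mathcal N_f \cap \mathcal Z_f$ being empty for weak domains on the interior, or more carefully with the bound $|\mathcal N_f \setminus \mathcal Z_f| \ge |\mathcal N_f| - |\mathcal Z_f|$ and the (conjecturally empty, but in any case) small size of $\mathcal Z_f$, we get a contradiction provided $\mathcal Z_f$ is not too large. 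I would handle $\mathcal Z_f$ by noting that since we are in the weak-domain setting $\mathcal E_f \cap \mathcal Z_f = \emptyset$, and actually the cleanest route is to run the whole argument with strong domains removed, exactly as the paragraph before Lemma~\ref{lem:smallsum} indicates, so that $\mathcal Z_f$ simply doesn't enter.

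The one genuine subtlety — and the step I expect to be the main obstacle — is making sure the chain of set inclusions is airtight: specifically, that every vertex of $\Gamma(x_1) \cup \Gamma(x_2)$ really lands in $\mathcal P_f \cup \mathcal E_f$ and none can slip into $\mathcal N_f$. The point is that $\mathcal N_f$ is a connected non-positive domain not containing $x_1$ or $x_2$; a neighbor $w$ of $x_i$ with $f(w) \le 0$ together with $x_i$ would lie in a common non-positive connected set, so $w$ and $x_i$ are in the same weak non-positive domain, which is therefore not $\mathcal N_f$ (since it contains $x_i \in \mathcal E_f$) — hence $w \in \mathcal E_f$. Once this is nailed down, the size estimate $|\Gamma(x_1) \cup \Gamma(x_2)| = (\frac34 - o(1))n$ from the Fact, together with the assumed lower bound $|\mathcal N_f| \ge (\frac14 + \epsilon_0)n$ and the near-disjointness of $\mathcal N_f$ from $\mathcal P_f \cup \mathcal E_f$, yields $(\frac14 + \epsilon_0)n \le |\mathcal N_f| \le n - |\mathcal P_f \cup \mathcal E_f| \le (\frac14 + o(1))n$, which is false for large $n$. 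Therefore $|\mathcal E_f| \le 1$ almost surely, and the general-$p$ case is identical with $\frac14 = (1-p)^2\big|_{p=1/2}$ replaced by $(1-p)^2$ and the threshold $(\frac14 + \epsilon_0)n$ replaced by $(\frac{(1-p)^2}{?} + \epsilon_0)n$ — more precisely, one takes the hypothesis in the form $|\mathcal P_f|, |\mathcal N_f| \ge (\frac{1-(1-p)^2}{2} + \epsilon_0)n$ so that two exceptional vertices of one sign force the opposite domain below its guaranteed size.
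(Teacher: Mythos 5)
Your proposal is correct and is essentially the paper's own argument: the paper likewise combines the estimate $|\Gamma(x)\cup\Gamma(y)|\ge(\tfrac34-o(1))n$ with the same-sign observation from the proof of Lemma~\ref{lem:fewsmall} and maximality of the weak domains, phrased as ``$\{x,y\}$ must send an edge to both $\mathcal P_f$ and $\mathcal N_f$, but cannot.'' Your worry about $\mathcal Z_f$ is unnecessary: the observation you prove in your ``subtlety'' paragraph (no neighbor of a negative exceptional vertex can lie in $\mathcal N_f$) already gives $\mathcal N_f\subseteq V\setminus(\Gamma(x_1)\cup\Gamma(x_2))$, hence $|\mathcal N_f|\le(\tfrac14+o(1))n$ directly, with no need for disjointness of $\mathcal N_f$ from $\mathcal P_f\cup\mathcal E_f$ or any passage to strong domains.
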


\begin{proof}
 Almost surely, every pair of vertices $x,y \in V(G)$, satisfies
 $|\Gamma(x) \cup \Gamma(y)| \geq (\frac{3}{4}-o(1))n$.  Assuming the
 stated lower bound on $|\mathcal P_f|$ and $|\mathcal N_f|$, there
 must be an edge from $\{x,y\}$ to both $\mathcal P_f$ and $\mathcal
 N_f$.  But if $x$ and $y$ are both exceptional, then $f$ must have
 the same sign on $x$ and $y$ (see, e.g.  the proof of Lemma
 \ref{lem:fewsmall}), implying that there cannot be both types of
 edges.
\end{proof}

\medskip
\noindent
Next, we show that if one can obtain a slightly non-trivial upper
bound on $\|f\|_\infty$ for eigenvectors $f$ of $G(n,p)$, then almost
surely all eigenvectors of such graphs have at most $O(\frac{1}{p})$
exceptional vertices, and e.g. at most one exceptional vertex for $p
\in [0.21,0.5]$. First, we pose the following natural problem.

\begin{question}
 Is it true that, almost surely, every eigenvector $f$ of $G(n,p)$
 has $\|f\|_\infty = o(1)$?  The natural guess would be that for
 almost all graphs, every eigenfunction satisfies $\|f\|_\infty =
 n^{-\frac 12 + o(1)}$.
\end{question}

Very recently we learned that Tao and Vu \cite[Prop. 58]{TaoVuNew}
have made significant progress on this question by showing
that it holds for all but $o(n)$ of the eigenvectors.
The preceding question, though, is still open.
A positive answer yields more precise control on the
nodal domains of $G(n,p)$.

\begin{theorem}\label{thm:exactly2}
 Suppose that for almost every $G \sim G(n,p)$ it holds that all
 eigenvectors $f$ of $G$ satisfy $\|f\|_\infty = o(1)$. Then almost
 surely every eigenvector has at most $k_p = \lfloor
 \frac{1}{\log_2(1/(1-p))} \rfloor$ exceptional vertices.
\end{theorem}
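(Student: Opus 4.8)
The plan is to show that if some non-first eigenvector $f$ (normalized so $\|f\|_2=1$, and hence, by hypothesis, $\|f\|_\infty=o(1)$) had at least $k_p+1$ exceptional vertices, then $f$ would be forced to carry $\ell_2$-mass $o(1)$ on a vertex set of size at least $(\tfrac12+\varepsilon')n$ for a constant $\varepsilon'=\varepsilon'(p)>0$, contradicting \thmref{thm:l2mass}. (The first eigenvector is positive, so it has no exceptional vertices.)

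First I would make the standard reductions, exactly as in the proof of \lemref{lem:fewsmall}: we may assume $|\mathcal E_f|=O_p(1)$ (\lemref{lem:fewsmall} itself), that $f$ has a single sign on $\mathcal E_f$, and — after possibly replacing $f$ by $-f$, which changes neither $\lambda$ nor the set $\mathcal E_f$ — that $f<0$ on $\mathcal E_f$. Since $|\mathcal E_f|=O_p(1)$, the $\ell_\infty$ hypothesis yields $\|f|_{\mathcal E_f}\|_1\le|\mathcal E_f|\,\|f\|_\infty=o(1)$ and $\|f|_{\mathcal E_f}\|_2^2\le|\mathcal E_f|\,\|f\|_\infty^2=o(1)$. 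I would also record $\langle f,\mathbf 1\rangle=O_p(1)$ (\lemref{lem:smallsum}) and $|\lambda|=O(\sqrt n)$ (\thmref{thm:furkom}).

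Now suppose $x_1,\dots,x_{k_p+1}\in\mathcal E_f$ are distinct. As in \lemref{lem:fewsmall}, a neighbour of the negative exceptional vertex $x_i$ is either positive — hence in $\mathcal P_f\cup\mathcal E_f$ — or negative, hence in the same exceptional component as $x_i$ and so in $\mathcal E_f$; moreover no neighbour of $x_i$ lies in $\mathcal Z_f$. Thus $U:=\bigcup_i\Gamma(x_i)\subseteq\mathcal P_f\cup\mathcal E_f$, $U$ meets no vertex of $\mathcal N_f\setminus\mathcal Z_f$, and by the Fact above $|U|=(1-(1-p)^{k_p+1}\pm o(1))n$; since $(1-p)^{k_p+1}<\tfrac12$ by the definition of $k_p$, we get $|U|\ge(\tfrac12+\varepsilon')n$ for a constant $\varepsilon'>0$. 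The heart of the argument is the eigenvector equation at each $x_i$. When $\lambda\ge0$: from $\lambda f(x_i)=f(\Gamma(x_i))$ and $f(x_i)<0$ we have $\lambda f(x_i)\le0$, hence
$$\sum_{y\in\Gamma(x_i)\cap\mathcal P_f}f(y)=\lambda f(x_i)+\sum_{y\in\Gamma(x_i)\cap\mathcal E_f}|f(y)|\le\|f|_{\mathcal E_f}\|_1=o(1),$$
and the left-hand side is $\ge0$, so $\|f|_{\Gamma(x_i)\cap\mathcal P_f}\|_1=o(1)$, uniformly in $i$. The $\ell_\infty$ bound upgrades this: $\|f|_{\Gamma(x_i)\cap\mathcal P_f}\|_2^2\le\|f\|_\infty\,\|f|_{\Gamma(x_i)\cap\mathcal P_f}\|_1=o(1)$. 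Since $U\subseteq\mathcal E_f\cup\bigcup_i(\Gamma(x_i)\cap\mathcal P_f)$ and there are only $k_p+1=O(1)$ pieces,
$$\|f|_U\|_2^2\le\|f|_{\mathcal E_f}\|_2^2+\sum_i\|f|_{\Gamma(x_i)\cap\mathcal P_f}\|_2^2=o(1).$$
To extract the contradiction I would, for each fixed tuple $(x_1,\dots,x_{k_p+1})$, condition on the edges incident to these $O(1)$ vertices — which fixes $U$ — and apply \thmref{thm:l2mass} to the now-fixed set $U$ of size $\ge(\tfrac12+\varepsilon')n$ (its proof is unaffected by the $O(1)$ conditioned rows and columns of the relevant rectangular random matrix, and $\beta=\beta(p)$ there does not depend on $\varepsilon'$), then union bound over the at most $n^{k_p+1}$ tuples; this shows $\|f|_U\|_2\ge\alpha(p)>0$ almost surely for every such $U$ and every non-first eigenvector, contradicting $\|f|_U\|_2^2=o(1)$. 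The same display works verbatim for $-1\le\lambda<0$, since then $|\lambda f(x_i)|\le|f(x_i)|\le\|f\|_\infty=o(1)$ already forces $\sum_{y\in\Gamma(x_i)\cap\mathcal P_f}f(y)=\lambda f(x_i)+o(1)=o(1)$.

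The step I expect to be the main obstacle is the range $\lambda<-1$ (which does occur, as $G(n,p)$ has eigenvalues near $-2\sqrt{p(1-p)n}$): there the eigenvector equation gives only $\|f|_{\Gamma(x_i)\cap\mathcal P_f}\|_1=|\lambda f(x_i)|+o(1)$, which can be as large as $|\lambda|\,\|f\|_\infty=o(\sqrt n)$ — far too weak. A natural attempt is to pass to the complement graph $\bar G\sim G(n,1-p)$, for which $f$ is, up to an $O_p(\sqrt n)$ correction in the $\mathbf 1$-direction, an eigenvector with eigenvalue $-(1+\lambda)>0$, and to rerun the neighbourhood argument with the co-neighbourhoods $\overline{\Gamma}(x_i)$ (which still contain $\mathcal N_f\setminus\mathcal Z_f$); but this requires care, since the nodal-domain partition of $f$ is attached to $G$, not to $\bar G$, so an extra idea is needed to make the transfer go through. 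Everything else — propagating the $o(1)$ estimates, the elementary $G(n,p)$ facts, and the polynomial union bound — is routine.
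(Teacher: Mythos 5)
Your argument is incomplete exactly where you say it is, and the obstacle you flag is not a side case: non-first eigenvalues of $G(n,p)$ reach magnitude $\Theta(\sqrt{n})$ (both signs), while the hypothesis only gives $\|f\|_\infty = o(1)$, so $|\lambda f(x_i)|$ can be as large as $o(\sqrt n)$ and your derivation of $\|f|_{\Gamma(x_i)\cap\mathcal P_f}\|_1 = o(1)$ breaks down for all such eigenvectors; the complement-graph transfer you sketch is not how this is repaired (as you note, the nodal structure is attached to $G$, and no clean transfer is available). The paper's proof avoids the dichotomy on $\lambda$ altogether: since all neighbours of an exceptional $u$ outside its own small domain $D_u$ carry one sign, one gets $\sum_{x\in\Gamma(u)}|f(x)| \leq |\lambda|\,|f(u)| + O(|D_u|) = O_p(\sqrt n)\,\|f\|_\infty + O(\log n) = o(\sqrt n)$ for \emph{every} eigenvalue, using \lemref{lem:smallcomp} for $|D_u|$. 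The missing idea is then a thresholding (Markov) step: with $\Gamma=\bigcup_{u}\Gamma(u)$ of size at least $(\tfrac12+\varepsilon_p)n$, set $\Gamma' = \{x\in\Gamma : |f(x)|\le c/\sqrt n\}$ with $c=c(n)\to 0$ slowly; the $o(\sqrt n)$ bound on the $\ell_1$ mass forces $|\Gamma\setminus\Gamma'| = o(n)$, so $\Gamma'$ still has size $\geq(\tfrac12+\varepsilon'_p)n$ while $\|f|_{\Gamma'}\|_2 \le c = o(1)$. This recovers your desired ``small $\ell_2$ mass on a large set'' without ever needing $|\lambda|\,\|f\|_\infty = o(1)$.

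A second, smaller issue is your appeal to \thmref{thm:l2mass} after conditioning on the edges at the tuple: \thmref{thm:l2mass} is stated for a deterministic $S$, and making the conditioning rigorous (the eigenvector equation at the conditioned vertices contributes an extra shift, and those rows must be removed from the random rectangular block) is precisely the content of \thmref{thm:l2mass2} in the paper, which you should invoke rather than assert. Moreover, in the paper's route the relevant set $\Gamma'$ depends on $f$ itself, not only on the rows indexed by the exceptional vertices, so even \thmref{thm:l2mass2} cannot be applied to it directly; the paper instead applies \thmref{thm:l2mass2} to the whole family $\mathcal U$ of subsets of $\Gamma$ missing at most $y(n)=o(n)$ vertices (a family determined by the rows at $U$, containing $\Gamma'$), and wins the union bound because $\log{n\choose y(n)} = o(n)$ is beaten by the $\exp(-\beta n)$ failure probability, together with the ${n\choose k_p+1}$ choices of the tuple. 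With the thresholding step and this two-level union bound added, your outline matches the paper's proof; without them it does not cover eigenvectors with $|\lambda|$ of order $\sqrt n$ and negative sign, which are the bulk of the spectrum's lower edge.
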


In order to prove this statement, we need the following strengthening
of Theorem \ref{thm:l2mass}. In particular, we require that the subset
of vertices $S$ be allowed to (weakly) depend on the random choice of
$G \sim G(n,p)$.

\begin{theorem}
 \label{thm:l2mass2} For every $p \in (0, 1)$, and $\varepsilon > 0$,
 there exist values $\alpha = \alpha(\varepsilon,p) > 0$ and $\beta =
 \beta(p) > 0$ such that the following holds. Suppose $G \sim G(n,p)$
 and $A$ is the adjacency matrix of $G$.  Suppose further that $S
 \subseteq [n]$ is a (possibly) random subset which is allowed
 to depend on the rows of $A$
 indexed by a set $T \subseteq [n]$ with $|T| = o(n)$. Then for all
 sufficiently large $n$, we have
$$
\Pr\left[ \exists \textrm{a non-first eigenvector $f$ of $G$ with
   $\|f|_S\|_2 < \alpha \|f|_{V \setminus T}\|_2$ and $|S| \geq
   (\tfrac12+\varepsilon)n$}\right] \leq \exp(-\beta n).
$$
\end{theorem}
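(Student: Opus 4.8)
The plan is to run the proof of Theorem \ref{thm:l2mass}, but first conditioning on the rows of $A$ indexed by $T$ (equivalently, by symmetry, on all entries $A_{xy}$ with $x\in T$). After this conditioning $S$ is a fixed set, while every entry $A_{xy}$ with $x,y\notin T$ is still an undisturbed independent (shifted) copy of $X_p$, and this is the randomness we exploit. Replacing $S$ by $S\setminus T$ — which discards at most $|T|=o(n)$ vertices and hence changes $\varepsilon$ only by $o(1)$ — we may assume $S\cap T=\emptyset$; set $V'=V\setminus T$, so $S\subseteq V'$. I will also fix once and for all an almost-sure event $\mathcal G$ (with $\Pr[\neg\mathcal G]=e^{-\Omega(n)}$) on which: $\|M\|=O_p(\sqrt n)$ and $|\lambda|=O_p(\sqrt n)$ for every non-first eigenvalue (Theorems \ref{thm:symtail}, \ref{thm:furkom}), where $M=pJ-A$; $|\langle f,\mathbf 1\rangle|=O_p(1)$ for every non-first eigenvector (Lemma \ref{lem:smallsum}); and, crucially, $\|f|_{V\setminus U}\|_2\ge\alpha_0=\alpha_0(p)>0$ for every non-first $f$ and every $U$ with $|U|=o(n)$, which follows by union-bounding Theorem \ref{thm:l2mass} (with a fixed $\varepsilon=\tfrac14$) over all such $U$, of which there are only $e^{o(n)}$ many.

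Now fix a subset $S_0\subseteq S$, depending only on the $T$-rows, so that $W:=V\setminus(S_0\cup T)$ — which always contains $V'\setminus S$ — has size $\Theta(n)$ with $|W|<\tfrac12 n$ (so that $|S_0|>|W|$): if $|V'\setminus S|$ is already $\Theta(n)$ take $S_0=S$, and otherwise move $\Theta(n)$ vertices of $S$ into $W$, which is harmless precisely because $f$ is supposed to have small mass on $S$. Restricting the eigenvector identity $Mf=p\bar f\mathbf 1-\lambda f$ (with $\bar f=\langle f,\mathbf 1\rangle$) to the rows in $S_0$ and splitting the columns as $S_0\sqcup W\sqcup T$ gives
\[
Q_0(f|_W)\;=\;p\bar f\,\mathbf 1_{S_0}\;-\;\lambda f|_{S_0}\;-\;P_0(f|_{S_0})\;-\;R_0(f|_T),
\]
where $P_0=M[S_0,S_0]$, $Q_0=M[S_0,W]$, $R_0=M[S_0,T]$. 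Here: $Q_0$ has $|S_0|=(1+\delta)|W|$ rows with $\delta$ ranging only over a fixed compact subinterval of $(0,\infty)$, and its entries avoid $T$, so after the conditioning it is still a fresh copy of $M_{|S_0|\times|W|}(p)$; the middle two terms have norm at most $(|\lambda|+\|P_0\|)\|f|_{S_0}\|_2\le O(\sqrt n)\,\|f|_S\|_2$; and the first and last terms lie in $W_0:=\mathrm{span}(\mathbf 1_{S_0})+\mathrm{col}(R_0)\subseteq\mathbb R^{S_0}$, a subspace of dimension at most $1+|T|=o(n)$, with combined norm $O(\sqrt n)$ (using $|\bar f|=O_p(1)$, $\|R_0\|\le\|M\|=O_p(\sqrt n)$, $\|f|_T\|_2\le 1$). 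Assume for contradiction that $\|f|_S\|_2<\alpha\|f|_{V'}\|_2$. Since $V'=S\sqcup(V'\setminus S)$ and $V'\setminus S\subseteq W$, we get $a:=\|f|_W\|_2\ge\sqrt{\|f|_{V'}\|_2^2-\|f|_S\|_2^2}\ge\alpha_0\sqrt{1-\alpha^2}$, a positive constant; dividing the display by $a$ shows that the unit vector $v:=f|_W/a\in S^{|W|-1}$ satisfies $\|Q_0 v-z\|_2\le C\alpha\sqrt n$ for some $z\in W_0$ with $\|z\|_2\le C\sqrt n$, where $C=C(p)$.

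To finish, condition on the $T$-rows (so $S_0,W,W_0,R_0$ are determined while $Q_0$ is a fresh $M_{|S_0|\times|W|}(p)$) and cover $W_0\cap B(0,C\sqrt n)$ by a net $\mathcal N$ at scale $\gamma\sqrt n$ for a small constant $\gamma$; since $\dim W_0=o(n)$, $|\mathcal N|\le(O(1))^{o(n)}=e^{o(n)}$. Choose $\alpha,\gamma$ small enough that $(C\alpha+\gamma)\sqrt n<\alpha(p,\delta)\sqrt{|S_0|}$; then the event above is contained in $\bigcup_{z_0\in\mathcal N}\{\exists v\in S^{|W|-1}:\|Q_0 v-z_0\|_2\le\alpha(p,\delta)\sqrt{|S_0|}\}$, each of whose events has probability $\le e^{-\beta(p)|S_0|}=e^{-\Omega(n)}$ by Theorem \ref{thm:LPRTV} — the compact range of $\delta$ is harmless, e.g. since one may always discard rows of $Q_0$ to reduce $\delta$ to a fixed value. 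A union bound over $\mathcal N$ gives conditional probability $e^{o(n)}e^{-\Omega(n)}=e^{-\Omega(n)}$, uniformly over the conditioning; averaging over the $T$-rows and adding $\Pr[\neg\mathcal G]$ yields the asserted $e^{-\beta n}$ bound with $\beta=\beta(p)$ and $\alpha=\alpha(\varepsilon,p)$.

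The genuinely new obstacle, compared with Theorem \ref{thm:l2mass}, is the term $R_0(f|_T)$: once the $T$-rows are revealed this is the image under an \emph{arbitrary} fixed matrix whose operator norm may be as large as $\Theta(\sqrt n)$, so it cannot be absorbed into the small error that Theorem \ref{thm:LPRTV} tolerates, and it destroys the simple picture (as in Theorem \ref{thm:l2mass}) of $Q_0 v$ being pinned near the single line $\mathrm{span}(\mathbf 1)$. The resolution — the main idea here — is that this term is nevertheless trapped in a subspace of dimension $O(|T|)=o(n)$, which is discretizable by a net of subexponential size, so the extra union bound costs only a factor $e^{o(n)}$ and is swallowed by the exponential gain from Theorem \ref{thm:LPRTV}. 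A secondary point worth isolating is the a priori delocalization bound $\|f|_{V\setminus T}\|_2\ge\alpha_0$: it is what keeps the normalization $f|_W\mapsto f|_W/a$ and the resulting constants under control, and it is itself available only because there are subexponentially many sets $T$ of size $o(n)$, so that Theorem \ref{thm:l2mass} survives a union bound over all of them.
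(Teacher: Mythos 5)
Your proof is correct, and its skeleton is the same as the paper's: condition on the rows indexed by $T$, restrict the eigenvector identity $Mf = p\bar f\mathbf 1-\lambda f$ to rows in a large subset of $S$ disjoint from $T$, split the columns so that the block avoiding $S\cup T$ is a fresh rectangular i.i.d.\ copy of $M_{m\times k}(p)$, and invoke Theorem \ref{thm:LPRTV}. Where you genuinely diverge is in the treatment of the leftover term $R_0(f|_T)$ coming from the $T$-columns. The paper compresses this step into the claim that the residual has the form $c_f w_T$ with $w_T$ determined by the $T$-rows and $c_f\in\mathbb R$ a scalar, and then says the argument ``proceeds just as in Theorem \ref{thm:l2mass}'' via Corollary \ref{cor:LPRTV}; taken literally that description only matches the case $|T|\le 1$, since for larger $T$ the unknown coefficients $f|_T$ make the residual range over a subspace of dimension up to $1+|T|$ rather than a single line. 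Your net over $W_0\cap B(0,C\sqrt n)$, of size $e^{O(|T|)}=e^{o(n)}$, is exactly the right way to finish, and your auxiliary a priori bound $\|f|_{V\setminus T}\|_2\ge\alpha_0$ (obtained by union-bounding Theorem \ref{thm:l2mass} with a fixed $\varepsilon$ over the $e^{o(n)}$ candidate sets of size at most $|T|$) is what keeps the radius of that ball at $O(\sqrt n)$ after normalizing by $\|f|_W\|_2$ --- a point the paper's normalization by $\|f|_{V\setminus T}\|_2$ also needs implicitly, since otherwise $R_0(f|_T)/\|f|_{V\setminus T}\|_2$ is uncontrolled. So your write-up correctly supplies the two ingredients that the paper's terse reduction leaves implicit, at the modest cost of an extra $e^{o(n)}$ union bound that is absorbed by the exponential gain from Theorem \ref{thm:LPRTV}. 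Only cosmetic points remain: phrase the union bound as being over all sets of size at most the given $t(n)=|T|$ (``every $U$ with $|U|=o(n)$'' is not well defined for a single $n$), and note that reducing the aspect ratio of $Q_0$ to a fixed $\delta$ by discarding rows leaves $\delta$ tied to $\varepsilon$, consistent with the claimed dependence $\alpha=\alpha(\varepsilon,p)$.
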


\begin{proof}
 Let $A$ be the adjacency matrix of $G = (V,E)$, and let $f : V \to
 \mathbb R$ be a non-first eigenvector of $G$ with eigenvalue
 $\lambda$. Let $S \subseteq V$ be a possibly random subset with $|S|
 \geq (\frac12+\varepsilon)n$. Let $T \subseteq [n]$ be such that
 $|T| \leq o(n)$ and $S$ is determined after conditioning on the
 values of $A$ in the rows indexed by $T$. Assume, furthermore, that
 $\|f|_S\|_2 < \alpha \|f|_{V \setminus T}\|_2$.

 Again, for every $x \in S$, the eigenvalue condition $\lambda f(x) =
 f(\Gamma(x))$ implies that
$$
\left| \sum_{y \in V \setminus T} A_{xy} f(y) + \sum_{y\in T} A_{xy}
 f(y) \right| = |\lambda f(x)|.
$$
Or equivalently,
$$
\left| \sum_{y \in V \setminus T} (p-A_{xy})f(y) -p \sum_{y \in V
   \setminus T} f(y) + \sum_{y \in T} A_{xy} f(y)\right| = |\lambda
f(x)|.
$$
Squaring and summing over all $x \in S \setminus T$ yields
\begin{equation}\label{eq:smallsum2}
 \sum_{x \in S \setminus T} \left(\sum_{y \in V \setminus T}
   (p-A_{xy}) f(y) - p\sum_{y \in V \setminus T} f(y) + \sum_{y \in T}
   A_{xy} f(y) \right)^2 = |\lambda |^2 \cdot \|f|_{S\setminus T}\|^2_2
\end{equation}
As above we define $M = pJ - A$, but now we let $B$ be the $|S
\setminus T| \times |V \setminus T|$ sub-matrix of $M$ consisting of
rows corresponding to vertices in $S\setminus T$ and columns
corresponding to vertices in $V \setminus T$. Now let $g =
\frac{f|_{V\setminus T}}{\|f|_{V \setminus T}\|_2}$, then
\eqref{eq:smallsum2} implies that $$\|B g - c_f w_T\|_2 \leq 2
|\lambda| \cdot \alpha$$ where $w_T \in \mathbb R^{|S\setminus T|}$ is
a vector which depends only on the rows of $A$ indexed by $T$ and $c_f
\in \mathbb R$ is some constant depending on $f$.  Note that we have
used the fact that $\|f|_S\|_2 \leq \alpha \|f|_{V\setminus
 T}\|_2$. Furthermore, $B$ and $S$ are independent random variables
conditioned on $T$. From this point, the proof proceeds just as in
Theorem \ref{thm:l2mass}.
\end{proof}

\begin{proof}[Proof of Theorem \ref{thm:exactly2}]
 Our goal is to show that almost surely $|\mathcal E_f| \leq k_p$ for
 every non-first eigenvector $f$ with associated eigenvalue
 $\lambda$. Suppose, to the contrary, that $|\mathcal E_f| > k_p$,
 and let $U \subseteq \mathcal E_f$ have $|U| = k_p+1$. Consider
 $\Gamma = \bigcup_{u \in U} \Gamma(u)$. By properties of random
 graphs, it holds that $$|\Gamma| \geq [1-(1-p)^{k_p+1}-o(1)]n \geq
 \left(\frac12+\varepsilon_p-o(1) \right)n,$$ for some positive $\varepsilon_p$.
 Thus for $n$ large enough, we may assume that indeed
 $|\Gamma| \geq \left(\frac12 + \varepsilon_p\right) n$.

 Again, by Theorem \ref{thm:furkom}, we have $|\lambda| =
 O_p(\sqrt{n})$ almost surely, and thus we assume this bound holds
 for the remainder of the proof.  Now for each $u \in U$, let $D_u$
 be the nodal domain of $u$ with respect to $f$.  Using the
 eigenvalue condition, for every $u \in U$, we have $|f(\Gamma(u))| =
 |\lambda f(u)| = O_p(\sqrt{n}) |f(u)|$.  Every neighborhood
 $\Gamma(u)$ has non-trivial intersection with only one of $\mathcal
 P_f$ or $\mathcal N_f$, hence grouping terms by sign, we have
\begin{eqnarray*}
 \sum_{x \in \Gamma(u)} |f(x)| &\leq & \left|f\left(\vphantom{\bigoplus}
     \Gamma(u) \cap (\mathcal P_f \cup \mathcal N_f)\right)
   + f\left(\vphantom{\bigoplus} \Gamma(u) \cap \mathcal E_f \setminus D_u \right)\right|
 + |f(D_u)| \\
 &\leq &
 |f(\Gamma(u))| + |D_u| \\
 &\leq &
 O_p(\sqrt{n}) |f(u)| + O(\log n),
\end{eqnarray*}
where we have used the estimate on $|D_u|$ from Lemma
\ref{lem:smallcomp} which holds almost surely.

In particular,
\begin{equation}\label{eq:single2}
 \sum_{u \in U} \sum_{x \in \Gamma(u)} |f(x)| \leq (k_p+1) \|f\|_\infty \cdot O_p(\sqrt{n}) + O(k_p \log n).
\end{equation}

Now let $\Gamma' = \{ x \in \Gamma : |f(x)| \leq \frac{c}{\sqrt{n}}\}$
for some $c = c(n) > 0$ to be chosen momentarily. Using
\eqref{eq:single2}, we see that $|\Gamma \setminus \Gamma'| \leq
\frac{O(k_p\sqrt{n}) |\lambda| \cdot \|f\|_\infty}{c}$. Under the
assumption $|\lambda| \cdot \|f\|_\infty = o(\sqrt{n})$, we can choose
$c = o(1)$ so that $|\Gamma \setminus \Gamma'| = o(n)$, in which case
we may assume that for $n$ large enough, $|\Gamma'| \geq \left(\frac12
 + \varepsilon'_p\right) n$ for some $\varepsilon'_p > 0$. We have
$\|f|_{\Gamma'}\|_2 = o(1)$, hence also
\begin{equation}\label{eq:normbound37}
 \|f|_{\Gamma'}\|_2 = o(1)
 \cdot \|f|_{V \setminus U}\|_2,
\end{equation}
since $\|f|_U\|_2 \leq \sqrt{|U|} \cdot \|f\|_\infty = o(1)$.

Since $\Gamma'$ depends on $f$ and not just on the rows of the adjacency matrix
corresponding to $U$, we cannot directly apply Theorem \ref{thm:l2mass2}.
Instead, we will apply the theorem to a collection of sets $\mathcal U$
which are determined
by $\Gamma$ (which {\em is} determined by the sets $\{\Gamma(u)\}_{u \in U}$),
with the guarantee that $\Gamma' \in \mathcal U$.

To this end, let $y(n) = o(n)$ be an upper bound on the size of $|\Gamma \setminus
\Gamma'|$, and consider $$\mathcal U = \{ W \subseteq \Gamma : |W|
\geq |\Gamma| - y(n)\}.$$ Then, we have $|\mathcal U| \leq {n \choose
 y(n)}$, $\Gamma' \in \mathcal U$, and
 the collection $\mathcal U$ is completely determined
 by the rows of the adjacency matrix of $G$ corresponding to the vertices in $U$
 (as it is determined by $\Gamma = \bigcup_{u \in U} \Gamma(u)$).
 We may enumerate $\mathcal U = \{U_1, U_2, \ldots \}$ in such a way
 that each $U_i$ is determined after conditioning on $\Gamma$ (simply by
 canonically ordering all subsets of the vertices, and taking the induced
 ordering on $\mathcal U$).

 We can thus apply Theorem \ref{thm:l2mass2} to each
 of the ${n \choose y(n)}$ sets $U_i \in \mathcal U$ (one of
which will always be the set $\Gamma'$) and take a union bound
to obtain, for some $\beta = \beta(p) > 0$,
\begin{eqnarray*}
 \Pr[\exists U \textrm{ s.t. } \|f|_{\Gamma'}\|_2 =o(1) \cdot
 \|f|_{V\setminus U}\|_2 \textrm{ and } |\Gamma'| \geq (\tfrac12 +
 \varepsilon'_p) n] \leq {n \choose k_p + 1} {n \choose y(n)}
 \exp(-\beta n),
\end{eqnarray*}
and the latter quantity is $o(1)$ since $y(n) = o(n)$, but this
contradicts \eqref{eq:normbound37}.
\end{proof}

\begin{remark}
Observe that even under the preceding assumptions, we are not able
to rule out the case of one exceptional vertex $v$ with, say
$\Gamma(v) = \mathcal N_f$ and $\mathcal P_f = V \setminus
(\mathcal
 N_f \cup \{v\})$.
\end{remark}

\bigskip

\begin{center}
{\bf APPENDIX}
\end{center}

\appendix

\section{A few examples}
We used a simple MATLAB program to compute an upper bound
on the constant behind the $O_p(1)$ term in
\lemref{lem:fewsmall} for various $p$'s.
It calculates the largest $k$ such that
inequality \eqref{eq:krdependence} holds, using $r = \tfrac{\alpha
  \sqrt{\tfrac12 + \varepsilon}}{2D}$,
  and the explicit bounds computed in Theorem
  \ref{thm:nonsymtail} and \eqref{eq:alpha} and \eqref{eq:beta}.
Our bounds for various $p$'s
are given in the following table.

\begin{center}
 \begin{tabular}{|r|r|}
   \hline
   $p$ & $k$ \\
   \hline
   0.78 & 29 \\
   0.74 & 30 \\
   0.7 & 32 \\
   0.66 & 34 \\
   0.62 & 37 \\
   0.58 & 39 \\
   0.54 & 43 \\
   0.5 & 46 \\
   0.46 & 54 \\
   0.42 & 63 \\
   0.38 & 75 \\
   0.34 & 90 \\
   0.3  & 109 \\
   0.26 & 137 \\
   0.22 & 181 \\
   0.18 & 277 \\
    \hline
 \end{tabular}
\end{center}

\bibliography{mybib}

\begin{thebibliography}{10}

\bibitem{AKV}
Noga Alon, Michael Krivelevich, and Van~H. Vu.
\newblock On the concetration of eigenvalues of random symmetric matrices.
\newblock {\em Israel J. Math.}, 131:259--267, 2002.

\bibitem{howtocount}
Ram Band, Idan Oren, and Uzy Smilansky.
\newblock Nodal domains on graphs---how to count them and why?
\newblock In {\em Analysis on graphs and its applications}, volume~77 of {\em
  Proc. Sympos. Pure Math.}, pages 5--27. Amer. Math. Soc., Providence, RI,
  2008.

\bibitem{berger}
Marcel Berger.
\newblock {\em A Panoramic View of Riemannian Geometry}.
\newblock Springer-Verlag, 2003.

\bibitem{chavel}
Isaac Chavel.
\newblock {\em Eigenvalues in Riemannian Geometry}.
\newblock Academic Press, 1984.

\bibitem{Coifman1}
R.~R. Coifman, S.~Lafon, A.~B. Lee, M.~Maggioni, B.~Nadler, F.~Warner, and
  S.~Zucker.
\newblock Geometric {D}iffusion as a tool for harmonic analysis and structure
  definition of data, part i: {D}iffusion maps.
\newblock {\em Proc. Natl. Acad. Sci.}, 102(21):7426--7431, 2005.

\bibitem{Coifman2}
R.~R. Coifman, S.~Lafon, A.~B. Lee, M.~Maggioni, B.~Nadler, F.~Warner, and
  S.~Zucker.
\newblock Geometric {D}iffusion as a tool for harmonic analysis and structure
  definition of data, part i: {M}ultiscale methods.
\newblock {\em Proc. Natl. Acad. Sci.}, 102(21):7432--7437, 2005.

\bibitem{TaoVuCost}
Kevin~P. Costello, Terence Tao, and Van Vu.
\newblock Random symmetric matrices are almost surely nonsingular.
\newblock {\em Duke Math. J.}, 135(2):395--413, 2006.

\bibitem{discretenodal}
E.~Brian Davies, Graham M.~L. Gladwell, Josef Leydold, and Peter~F. Stadler.
\newblock Discrete nodal domain theorems.
\newblock {\em Linear Algebra and its Applications}, 336(1--3):51--60, 2001.

\bibitem{FK}
Zolt{\'a}n F{\"u}redi and J{\'a}nos Koml{\'o}s.
\newblock The eigenvalues of random symmetric matrices.
\newblock {\em Combinatorica}, 1(3):233--241, 1981.

\bibitem{hlw}
Shlomo Hoory, Nathan Linial, and Avi Wigderson.
\newblock Expander graphs and their applications.
\newblock {\em Bull. AMS}, 43(4):439--561, October 2006.

\bibitem{KV02}
Michael Krivelevich and Van~H. Vu.
\newblock Approximating the independence number and the chromatic number in
  expected polynomial time.
\newblock {\em J. Comb. Optim.}, 6(2):143--155, 2002.

\bibitem{LPRT05}
A.~E. Litvak, A.~Pajor, M.~Rudelson, and N.~Tomczak-Jaegermann.
\newblock Smallest singular value of random matrices and geometry of random
  polytopes.
\newblock {\em Adv. Math.}, 195(2):491--523, 2005.

\bibitem{LPRTV05}
A.~E. Litvak, A.~Pajor, M.~Rudelson, N.~Tomczak-Jaegermann, and R.~Vershynin.
\newblock Euclidean embeddings in spaces of finite volume ratio via random
  matrices.
\newblock {\em J. Reine Angew. Math.}, 589:1--19, 2005.

\bibitem{lovasz}
L.~Lov\'{a}sz.
\newblock {\em Combinatorial Problems and Exercises}.
\newblock Elsevier, 1979.

\bibitem{PZ32}
R.~Paley and A.~Zygmund.
\newblock A note on analytic functions in the unit circle.
\newblock {\em Proc. Camb. Phil. Soc.}, 28:266--272, 1932.

\bibitem{PSL90}
Alex Pothen, Horst~D. Simon, and Kan-Pu Liou.
\newblock Partitioning sparse matrices with eigenvectors of graphs.
\newblock {\em SIAM J. Matrix Anal. Appl.}, 11(3):430--452, 1990.

\bibitem{Rudelson}
M.~Rudelson.
\newblock Invertibility of random matrices: norm of the inverse.
\newblock To appear, {\em Ann. Math.}, 2007.

\bibitem{RV07}
M.~Rudelson and R.~Vershynin.
\newblock The {L}ittlewood-{O}fford problem and invertibility of random
  matrices.
\newblock {\em Adv. Math.}, 218:600--633, 2008.

\bibitem{SM00}
Jianbo Shi and Jitendra Malik.
\newblock Normalized cuts and image segmentation.
\newblock {\em IEEE Transactions on Pattern Analysis and Machine Intelligence},
  22(8):888--905, 2000.

\bibitem{Stroock93}
Daniel~W. Stroock.
\newblock {\em Probability theory, an analytic view}.
\newblock Cambridge University Press, Cambridge, 1993.

\bibitem{TaoVu}
T.~Tao and V.~Vu.
\newblock Inverse {L}ittlewood-{O}fford theorems and the condition number of
  random discrete matrices.
\newblock To appear, {\em Ann. Math.}, 2007.

\bibitem{TaoVuNew}
T.~Tao and V.~Vu.
\newblock Random matrices: The distribution of the smallest singular values.
\newblock {\em ArXiv e-prints}, 2009.

\bibitem{VuSurvey}
V.~Vu.
\newblock Random discrete matrices.
\newblock arXiv:math/0611321, 2007.

\bibitem{Weiss99}
Y.~Weiss.
\newblock Segmentation using eigenvectors: A unifying view.
\newblock In {\em Proc. of the IEEE conf. on Computer Vision and Pattern
  Recognition}, pages 520--527, 1999.

\end{thebibliography}
\bibliographystyle{plain}

\end{document}